\documentclass{article}
\usepackage[demo]{graphicx}
\usepackage{float}
\restylefloat*{figure}
\usepackage{comment}
\usepackage{amsmath}
\usepackage{amsthm}
\usepackage{amssymb}
\usepackage{mathrsfs}
\usepackage{textcomp}
\usepackage[colorlinks=true,linkcolor=blue,citecolor=green]{hyperref}
\usepackage[margin=3cm]{geometry}
\usepackage{stmaryrd}
\usepackage{bussproofs}
\usepackage{mathtools}
\usepackage[all]{xy}
\usepackage{todonotes}
\usepackage{forest}
\forestset{smullyan tableaux/.style={for tree={math content},where n children=1{!1.before computing xy={l=\baselineskip},!1.no edge}{},closed/.style={label=below:$\times$},},}
\newcommand{\Ltriangle}{\mathcal{L}_\triangle}

\newcommand{\Gtriangle}{\mathsf{G}\triangle}
\newcommand{\HGtriangle}{\mathcal{H}\Gtriangle}

\newcommand{\Prop}{\mathtt{Prop}}
\newcommand{\coimplies}{\Yleft}
\newcommand{\Gsquare}{\mathsf{G}^2}
\newcommand{\crispKGsquare}{\mathbf{K}\mathsf{G}^{2\mathsf{c}}}
\newcommand{\KGsquare}{\mathbf{K}\mathsf{G}^2}
\newcommand{\biG}{\mathsf{biG}}
\newcommand{\KbiG}{\mathbf{K}\mathsf{biG}}

\newcommand{\KG}{\mathbf{K}\mathsf{G}}
\newcommand{\crispKG}{\mathbf{K}\mathsf{G}^{\mathsf{c}}}
\newcommand{\fuzzyKG}{\mathbf{K}\mathsf{G}^{\mathsf{f}}}
\newcommand{\HKbiG}{\mathcal{H}\KbiG}
\newcommand{\HcrispKbiG}{\mathcal{H}\crispKbiG}
\newcommand{\HcrispKG}{\mathcal{H}\crispKG}
\newcommand{\Sfconst}{\mathsf{Sf}^{\mathbf{0},\mathbf{1}}}

\newcommand{\Th}{\mathsf{Th}}

\newcommand{\HKG}{\mathcal{H}\KG}

\newcommand{\bimodalL}{\mathcal{L}_{\Box,\lozenge}}
\newcommand{\bimodalLtriangle}{\mathcal{L}_{\triangle,\Box,\lozenge}}
\newcommand{\bimodalLtrianglesquare}{\mathcal{L}^\neg_{\triangle,\Box,\lozenge}}
\newcommand{\infobimodalLtrianglesquare}{\mathcal{L}^\neg_{\triangle,\blacksquare,\blacklozenge}}

\newcommand{\fuzzyinfoGsquare}{\mathsf{G}^{2\pm\mathsf{f}}_{\blacksquare,\blacklozenge}}
\newcommand{\infoGsquare}{\mathsf{G}^{2}_{\blacksquare,\blacklozenge}}
\newcommand{\birelinfoGsquare}{\mathsf{G}^{2\pm}_{\blacksquare,\blacklozenge}}
\newcommand{\fuzzybirelKGsquare}{\mathbf{K}\mathsf{G}^{2\pm\mathsf{f}}}
\newcommand{\birelKGsquare}{\mathbf{K}\mathsf{G}^{2\pm}}
\newcommand{\fuzzyKbiG}{{\KbiG}^{\mathsf{f}}}
\newcommand{\HfuzzyKbiG}{\mathcal{H}{\KbiG}^{\mathsf{f}}}
\newcommand{\pspace}{\mathsf{PSpace}}
\newcommand{\monorelinfoGsquare}{\mathsf{G}^{2\mathsf{f}}_{\blacksquare,\blacklozenge}}
\newcommand{\monorelinfoGsquarecrisp}{\mathsf{G}^{2\mathsf{c}}_{\blacksquare,\blacklozenge}}
\newcommand{\fuzzyKGsquare}{\mathbf{K}\mathsf{G}^{2\mathsf{f}}}
\newcommand{\crispKbiG}{{\KbiG}^{\mathsf{c}}}
\newcommand{\crispbirelKGsquare}{\mathbf{K}\mathsf{G}^{2\pm\mathsf{c}}}
\newcommand{\crispinfoGsquare}{\mathsf{G}^{2\pm\mathsf{c}}_{\blacksquare,\blacklozenge}}
\newcommand{\lineBox}{\overline{\Box}}
\newcommand{\linelozenge}{\overline{\lozenge}}
\newcommand{\lineblacksquare}{\overline{\blacksquare}}
\newcommand{\lineblacklozenge}{\overline{\blacklozenge}}
\newcommand{\linebimodalLtrianglesquare}{\overline{\bimodalLtrianglesquare}}
\newcommand{\lineinfobimodalLtrianglesquare}{\overline{\infobimodalLtrianglesquare}}
\newcommand{\Luk}{{\mathchoice{\mbox{\sffamily{\L}}}{\mbox{\sffamily{\L}}}{\mbox{\scriptsize\sffamily{\L}}}{\mbox{\tiny\sffamily{\L}}}}}
\newcommand{\triangletop}{\triangle^\top}
\newtheorem{theorem}{Theorem}[section]
\newtheorem{proposition}{Proposition}[section]
\newtheorem{corollary}{Corollary}[section]
\newtheorem{lemma}{Lemma}[section]
\theoremstyle{definition}
\newtheorem{definition}{Definition}[section]
\theoremstyle{remark}
\newtheorem{remark}{Remark}[section]
\newtheorem{example}{Example}[section]

\newtheorem{convention}{Convention}[section]

\usepackage{longtable}
\usepackage{fancyhdr}
\pagestyle{fancy}
\usepackage{authblk}
\usepackage{soul}

\allowdisplaybreaks
\begin{document}
\providecommand{\keywords}[1]
{
  \small	
  \textbf{\textit{Keywords: }} #1
}
\title{Fuzzy bi-G\"{o}del modal logic and its paraconsistent relatives\thanks{The research of Marta B\'ilkov\'a was supported by the grant 22-01137S of the Czech Science Foundation. The research of Daniil Kozhemiachenko and Sabine Frittella was funded by the grant ANR JCJC 2019, project PRELAP (ANR-19-CE48-0006). This research is part of the MOSAIC project financed by the European Union's Marie Sk\l{}odowska-Curie grant \textnumero101007627. The third author was also supported by the project INTENDED (ANR-19-CHIA-0014).\\\indent The authors wish to thank reviewers for their comments that enhanced the quality of the paper.}}
\author[1]{Marta B\'{\i}lkov\'{a}}
\author[2]{Sabine Frittella}
\author[3]{Daniil Kozhemiachenko}
\affil[1]{The Czech Academy of Sciences, Institute of Computer Science, Czech Republic\\\href{mailto:bilkova@cs.cas.cz}{bilkova@cs.cas.cz}}
\affil[2]{INSA Centre Val de Loire, Univ.\ Orl\'{e}ans, LIFO EA 4022, France\\\href{mailto:sabine.frittella@insa-cvl.fr}{sabine.frittella@insa-cvl.fr}}
\affil[3]{Univ. Bordeaux, CNRS, Bordeaux INP, LaBRI, UMR 5800\\\href{mailto:daniil.kozhemiachenko@u-bordeaux.fr}{daniil.kozhemiachenko@u-bordeaux.fr} (corresponding author)}
\maketitle
\begin{abstract}
We present an axiomatisation of the fuzzy bi-G\"{o}del modal logic $\fuzzyKbiG$ formulated in the language containing $\triangle$ (Baaz Delta operator) and treating $\coimplies$ (coimplication) as the defined connective. We also consider two paraconsistent relatives of $\fuzzyKbiG$ --- $\fuzzybirelKGsquare$ and $\fuzzyinfoGsquare$. These logics are defined on fuzzy frames with two valuations $e_1$ and $e_2$ standing for the support of truth and falsity, respectively, and equipped with \emph{two fuzzy relations} $R^+$ and $R^-$ used to determine supports of truth and falsity of modal formulas. We construct embeddings of $\fuzzybirelKGsquare$ and $\fuzzyinfoGsquare$ into $\fuzzyKbiG$ and use them to obtain the characterisation of $\KGsquare$- and $\infoGsquare$-definable frames. Moreover, we study the transfer of $\fuzzyKbiG$ formulas into $\fuzzybirelKGsquare$, i.e., formulas that are $\fuzzyKbiG$-valid on mono-relational frames $\mathfrak{F}$ and $\mathfrak{F}'$ iff they are $\fuzzybirelKGsquare$-valid on their bi-relational counterparts. Finally, we establish $\pspace$-completeness of all considered logics.

\keywords{G\"{o}del modal logic; paraconsistent modal logics; complexity; frame definability.}
\end{abstract}
\section{Introduction\label{sec:introduction}}
When dealing with modalised propositions such as ‘I believe that Paula has a dog’, ‘I believe that Quinn thinks that Paula has a cat’, ‘Quinn has to submit the report by Friday’, etc., it is reasonable to assume that they can be equipped with a truth degree. After all, we can be more or less convinced of something, have stronger or weaker obligations, etc. On the other hand, it is not always justified that the agents can assign exact numerical values to the degrees of their beliefs or obligations (even if they can compare them). Thus, if we want to formalise these contexts, it makes sense to utilise modal expansions of G\"{o}del logic $\mathsf{G}$. Indeed, $\mathsf{G}$ can be construed as the logic of comparative truth since, in it, the truth of the formula is determined not by the individual values of its variables but rather by the relative order of these values. In particular, the G\"{o}del implication $\phi\rightarrow\chi$ is true iff the value of $\phi$ is less or equal to the value of $\chi$. Note, however, that there is no $\mathsf{G}$-formula of two variables $p$ and $q$ which is true iff the value of $p$ \emph{is strictly greater than} the value of $q$. To express this, one can use the coimplication~$\coimplies$~--- ${\sim\sim}(p\coimplies q)$ (the connective was introduced in~\cite{Rauszer1974}, the notation is due to~\cite{Gore2000}), or the Baaz Delta operator $\triangle$~\cite{Baaz1996}~--- ${\sim}\triangle(p\rightarrow q)$. Adding either of these\footnote{Recall that $\triangle$ and $\coimplies$ are interdefinable modulo $\mathsf{G}$ as follows: $\triangle\phi\coloneqq\mathbf{1}\coimplies(\mathbf{1}\coimplies\phi)$ and $\phi\coimplies\chi\coloneqq\phi\wedge{\sim}\triangle(\phi\rightarrow\chi)$.} to the language of G\"{o}del logic produces the \emph{bi-G\"{o}del logic} $\biG$ (or \emph{symmetric G\"{o}del logic} in the terminology of~\cite{GrigoliaKiseliovaOdisharia2016}).
\paragraph{(bi-)G\"{o}del modal logics}
Our main source of inspiration is the extensive study of G\"{o}del modal logics undertaken in the existing literature. Both $\Box$ and $\lozenge$ fragments\footnote{In contrast to the classical logic, $\Box$ and $\lozenge$ are not interdefinable in G\"{o}del modal logic.} of the G\"{o}del modal logic are axiomatised (cf.~\cite{CaicedoRodriguez2010} for the Hilbert-style axiomatisation and~\cite{MetcalfeOlivetti2009,MetcalfeOlivetti2011} for Gentzen calculi). Furthermore, fuzzy~\cite{CaicedoRodriguez2015} ($\KG^\mathsf{f}$) and crisp~\cite{RodriguezVidal2021} ($\crispKG$) bi-modal logics\footnote{In this text, we use the term ‘fuzzy modal logic’ to denote the logic of \emph{all frames} (i.e., crisp \emph{and} fuzzy); ‘crisp modal logic’ stands for the logic of \emph{only crisp frames}.} are also axiomatised; there is also a~tableaux calculus for $\fuzzyKG$ and $\crispKG$~\cite{Rogger2016phd}. It is also established that all these logics are decidable~\cite{CaicedoMetcalfeRodriguezRogger2013} and, in fact, $\pspace$-complete~\cite{CaicedoMetcalfeRodriguezRogger2017}, even though they are more expressive than $\mathbf{K}$. Regarding the bi-G\"{o}del modal logics, a temporal expansion has been studied in~\cite{AguileraDieguezFernandez-DuqueMcLean2022} and a provability logic with algebraic semantics was presented in~\cite{GrigoliaKiseliovaOdisharia2016}.
\paragraph{Paraconsistent modal logics}
The second source of motivation is the study of \emph{paraconsistent modal logics}. A logic is paraconsistent when its entailment does not satisfy the explosion property~--- $p,\neg p\models q$ (or \emph{ex contradictione quodlibet}). In modal contexts, the failure of this principle aligns very well with our intuition. Indeed, one can have contradictory beliefs but still \emph{not believe in something else}; likewise, even if one has conflicting obligations, it does not mean that they have (or are permitted) to do \emph{everything}.

One of the most common treatments of paraconsistent modal logics is to consider Kripke frames \emph{with two independent valuations} as it is done in, e.g.,~\cite{Wansing2005,Goble2006,Priest2008FromIftoIs,OdintsovWansing2010,OdintsovWansing2017}. These valuations are interpreted as supports of truth and falsity of a formula in a given state. Historically, this idea can be traced to the Belnap--Dunn logic ($\mathsf{BD}$, alias ‘first-degree entailment’ or $\mathsf{FDE}$)~\cite{Dunn1976,Belnap1977computer,Belnap1977fourvalued}. Each proposition could be true, false, both true and false or neither true nor false depending on the available information.

An important consequence of having independent supports of truth and falsity is that it is no longer the case that any two propositions have \emph{comparable values}. Indeed, it might be the case that both truth and falsity of $\phi$ are supported but neither truth nor falsity of $\chi$ is supported. In addition, both the support of the falsity and the support of the truth of $\phi$ can be greater than those of $\chi$ (or vice versa). This makes sense if we wish to model agents who cannot always compare the degrees of their beliefs in two given statements, who cannot always compare the strengths of their obligations, etc. This is justified if, e.g., $\phi$ and $\chi$ do not have any content in common. In addition, in the multi-agent setting, we might not always be able to compare the strength of beliefs of different agents even in the same statement.

After introducing two valuations on a frame, it makes sense to consider frames with \emph{two relations} designated $R^+$ and $R^-$ and utilised to determine supports of truth and falsity of modal formulas (cf., e.g.,~\cite{Sherkhonov2008,Drobyshevich2020} for examples of such logics). Intuitively, if we assume $R^+$ and $R^-$ to be fuzzy (i.e., if $W$ is the set of states of a given frame, then $R^+,R^-:W\times W\rightarrow[0,1]$) and the states in the frame to represent sources that refer to one another, they can be construed as, respectively, degrees of trust of one source in confirmations (assertions) or denials of the other source. For example, if a~source is too sceptical, we might be inclined to trust its denials \emph{less than} we trust its confirmations. On the contrary, if a~sensationalist source denies something, it can be considered \emph{more} trustworthy than when the very same source asserts something.

The idea of associating two independent relations to one modality was also utilised in the knowledge representation. E.g., in~\cite{MaierMaHitzler2013}, paraconsistent semantics of $\mathcal{ALC}$ and its extensions are presented where each role name is associated to two relations on the carrier. This way, the authors managed to maintain paraconsistent behaviour of the knowledge bases when using role constructors (in particular, reflexive and irreflexive closures), self restrictions, and negative role assertions.
\paragraph{Logics in the paper}
This paper continues the study of paraconsistent expansions of G\"{o}del modal logics that we began in~\cite{BilkovaFrittellaKozhemiachenko2022IJCAR} and continued in~\cite{BilkovaFrittellaKozhemiachenko2023IGPL,BilkovaFrittellaKozhemiachenko2023nonstandard}. We cover quite a few logics (cf.~Fig.~\ref{fig:logics}). Previously, we were mostly dealing with bi-G\"{o}del and paraconsistent G\"{o}del logics on \emph{crisp} frames. Now, our main interest lies in the logics on fuzzy frames (put in rectangles in the picture). These logics are obtained from the fuzzy bi-modal G\"{o}del logic ($\fuzzyKbiG$ from~\cite{BilkovaFrittellaKozhemiachenko2022IJCAR}) by adding a~De~Mor\-gan negation $\neg$ or (additionally) replacing $\Box$ and $\lozenge$ with $\blacksquare$ and $\blacklozenge$.
\begin{figure}[ht]
\centering
\begin{minipage}[b]{.35\textwidth}
\centering
\begin{tikzpicture}[>=stealth,relative]
\node (U1) at (0,-1.5) {$(0,1)$};
\node (U2) at (-1.5,0) {$(0,0)$};
\node (U3) at (1.5,0) {$(1,1)$};
\node (U4) at (0,1.5) {$(1,0)$};
\node (tmin) at (-2.5,-1.5) {};
\node (tmax) at (-2.5,1.5) {};
\node (imin) at (-1.5,-2.5) {};
\node (imax) at (1.5,-2.5) {};
\path[-,draw] (U1) to (U2);
\path[-,draw] (U1) to (U3);
\path[-,draw] (U2) to (U4);
\path[-,draw] (U3) to (U4);
\draw[dashed] (U1) -- (U4);
\draw[dashed] (U2) -- (U3);
\draw[->,draw] (tmin)  edge node[above,rotate=90] {truth}  (tmax);
\draw[->,draw] (imin)  edge node[below] {information} (imax);
\end{tikzpicture}
\caption{$[0,1]^{\Join}$: the truth order goes upwards and the information order goes rightwards.}
\label{fig:01squareagain}
\end{minipage}
\hfill
\begin{minipage}[b]{.6\textwidth}
\centering
\normalsize{\[\xymatrix{
&*+[F]{\fuzzybirelKGsquare}&\\
&\fuzzyKGsquare\ar[u]^{\pm}&{\crispbirelKGsquare}\ar[ul]^{\mathsf{ff}}&*+[F]{\fuzzyinfoGsquare}\\
*+[F]{\fuzzyKbiG}\ar[ur]|{\neg}&&\crispKGsquare\ar[ul]^{\mathsf{ff}}\ar[u]^{\pm}&\monorelinfoGsquare\ar[u]^{\pm}&{\crispinfoGsquare}\ar[ul]^{\mathsf{ff}}\\
\crispKbiG\ar[urr]|{\neg}\ar[u]^{\mathsf{ff}}&\fuzzyKG\ar[ul]|{\coimplies/\triangle}&&\monorelinfoGsquarecrisp\ar[u]^{\mathsf{ff}}\ar[ur]^{\pm}\\
\mathsf{biG}\ar[u]|{\Box,\lozenge}&\crispKG\ar[uur]|{\neg}\ar[u]^{\mathsf{ff}}\ar[ul]|{\coimplies/\triangle}&\Gsquare\ar[uu]|{\Box/\lozenge}\ar[ur]|{\blacksquare,\blacklozenge}\\
&\mathsf{G}\ar[ul]|{\coimplies/\triangle}\ar[u]|{\Box,\lozenge}\ar[ur]|{\neg}
}\]}
\caption{Logics mentioned in the paper. $\mathsf{ff}$~stands for ‘permitting fuzzy frames’; $\pm$ for ‘permitting birelational frames’. Subscripts on arrows denote language expansions. $/$ stands for ‘or’ and comma for ‘and’. The logics we mainly focus on in this paper are \fbox{put in frames}.}
\label{fig:logics}
\end{minipage}
\end{figure}

Let us quickly explain the difference between these two pairs of modalities. Recall, first of all (cf.~Fig.~\ref{fig:01squareagain}), that instead of giving two independent valuations on $[0,1]$, we can equivalently define one valuation on $[0,1]^{\Join}=[0,1]\times[0,1]^{\mathsf{op}}$. $\fuzzybirelKGsquare$ uses $\Box$ and $\lozenge$ which can be thought of as generalisations of the meet and join on $[0,1]^{\Join}$ w.r.t.\ the truth order: i.e., the value of $\Box\phi$ is computed using $\bigwedge$ --- the truth infimum of $\phi$ in the accessible states while $\lozenge$ uses $\bigvee$ --- the truth supremum. $[0,1]^{\Join}$, however, is a \emph{bi-lattice} and thus, there are $\sqcap$ and $\sqcup$, i.e., meet and join w.r.t.\ the informational order. Thus, $\fuzzyinfoGsquare$ (first introduced in~\cite{BilkovaFrittellaKozhemiachenko2023nonstandard}) expands $\biG$ with $\neg$ and $\blacksquare$ and $\blacklozenge$ that generalise $\sqcap$ and $\sqcup$ (we will further call $\blacksquare$ and $\blacklozenge$ ‘informational modalities’ and $\Box$ and $\lozenge$ ‘standard modalities’) as expected: the value of $\blacksquare\phi$ is computed using $\bigsqcap$ (the informational infimum) of the values of $\phi$ in the accessible states and the value of $\blacklozenge$ is obtained via $\bigsqcup$ (the informational supremum).
\begin{convention}\label{conv:logicsnotation}
In what follows, we are going to use several conventions for naming the logics.
\begin{itemize}
\item Index $^2$ designates that the logic is evaluated on frames with two valuations.
\item Indices $^\mathsf{c}$ and $^\mathsf{f}$ stand for, respectively, logics on crisp and fuzzy frames.
\item Index $^\pm$ denotes the logic whose frames have two accessibility relations $R^+$ and $R^-$.
\end{itemize}
When we consider both fuzzy and crisp versions of a given logic at once, we are going to omit the corresponding indices. Likewise, we omit all indices except $^2$ when we deal with all versions of a logic. Thus, for example ‘in all $\KGsquare$'s it holds that $X$’ means that $X$ is true w.r.t.\ $\crispKGsquare$, $\fuzzyKGsquare$, $\crispbirelKGsquare$, and $\fuzzybirelKGsquare$. Similarly, ‘$Y$ holds in $\birelinfoGsquare$’ stands for ‘$Y$ holds in $\fuzzyinfoGsquare$ and $\crispinfoGsquare$’.
\end{convention}

\paragraph{Plan of the paper}
In the previous paper~\cite{BilkovaFrittellaKozhemiachenko2023IGPL}, we studied the \emph{crisp} modal bi-G\"{o}del logic\footnote{I.e., the logic defined on the frames where the accessibility relation is crisp.} and its paraconsistent expansion (in this paper, we denote them $\crispKbiG$ and $\crispKGsquare$, respectively). We also studied its computational and model-theoretic properties. The main goal of this paper is to study the fuzzy bi-G\"{o}del modal logic and its paraconsistent relatives with standard and informational modalities. The remainder of the text is organised as follows.

In Section~\ref{sec:preliminaries}, we remind the semantics of propositional bi-G\"{o}del logic and its expansion with $\Box$ and~$\lozenge$ interpreted on both crisp and fuzzy frames. In addition, we show that some classes of frames definable in $\KbiG$ cannot be defined without $\triangle$. Section~\ref{sec:HKbiGf} is dedicated to the axiomatisation of $\fuzzyKbiG$. We construct a~Hilbert-style calculus for $\fuzzyKbiG$ and establish its strong completeness by adapting the proof from~\cite{CaicedoRodriguez2015}. In Section~\ref{sec:paraconsistentlogics}, we present $\KGsquare$'s and $\infoGsquare$'s, establish their embeddings into $\KbiG$, and study their semantical properties. Section~\ref{sec:transfer} is dedicated to the study of transferrable formulas, i.e., those that are valid on the same frames both in $\KbiG$ and $\KGsquare$. In Section~\ref{sec:complexity}, we establish the $\pspace$-completeness of $\fuzzyKbiG$ using the technique from~\cite{CaicedoMetcalfeRodriguezRogger2017}. We then use the embeddings obtained in Section~\ref{sec:paraconsistentlogics} to obtain the $\pspace$-completeness of the paraconsistent relatives of $\fuzzyKbiG$. Finally, Section~\ref{sec:conclusion} is devoted to the discussion of the results that we obtained and outlines further work.
\section{Preliminaries\label{sec:preliminaries}}
To make the paper self-contained, we begin with the presentation of the propositional fragment of $\KbiG$, namely, the bi-G\"{o}del logic $\biG$.
\subsection{Propositional bi-G\"{o}del logic}
The language $\Ltriangle$ is generated from the countable set $\Prop$ via the following grammar.
\begin{align*}
\Ltriangle\ni\phi&\coloneqq p\in\Prop\mid{\sim}\phi\mid\triangle\phi\mid(\phi\wedge\phi)\mid(\phi\vee\phi)\mid(\phi\rightarrow\phi)
\end{align*}
We also introduce two defined constants
\begin{align*}
\mathbf{1}&\coloneqq p\rightarrow p&
\mathbf{0}&\coloneqq{\sim}\mathbf{1}
\end{align*}
We choose $\triangle$ over $\coimplies$ as a primitive symbol because the former allows for a shorter and more elegant axiomatisation of the propositional fragment. Furthermore, the use of $\triangle$ simplifies the completeness proof of $\fuzzyKbiG$.

The semantics of bi-G\"{o}del logic $\biG$ is given in the following definition. For the sake of simplicity, we also include $\coimplies$ in the definition of the bi-G\"{o}del algebra on $[0,1]$ since it will simplify the presentation of the semantics of paraconsistent logics.
\begin{definition}\label{def:biGalgebra}
The bi-G\"{o}del algebra on $[0,1]$ denoted $[0,1]_{\biG}=\langle[0,1],0,1,\wedge_\mathsf{G},\vee_\mathsf{G},\rightarrow_{\mathsf{G}},\coimplies,\sim_\mathsf{G},\triangle_\mathsf{G}\rangle$ is defined as follows: for all $a,b\in[0,1]$, the standard operations are given by $a\wedge_\mathsf{G}b\coloneqq\min(a,b)$, $a\vee_\mathsf{G}b\coloneqq\max(a,b)$,
\begin{align*}
a\rightarrow_\mathsf{G}b&=
\begin{cases}
1\text{ if }a\leq b\\
b\text{ else}
\end{cases}
&
a\coimplies_\mathsf{G}b&=
\begin{cases}
0\text{ if }a\leq b\\
a\text{ else}
\end{cases}
&
{\sim}_\mathsf{G}a&=
\begin{cases}
0\text{ if }a>0\\
1\text{ else}
\end{cases}
&
\triangle_\mathsf{G}a&=
\begin{cases}
0\text{ if }a<1\\
1\text{ else}
\end{cases}
\end{align*}
A \emph{$\biG$ valuation} is a homomorphism $e:\Ltriangle\rightarrow[0,1]_\biG$ that is defined for the complex formulas as $e(\phi\circ\phi')=e(\phi)\circ_\mathsf{G}e(\phi')$ for every binary connective $\circ$ and $e(\sharp\phi)=\sharp_\mathsf{G}(e(\phi))$ for $\sharp\in\{\sim,\triangle\}$. We say that $\phi$ is \emph{valid} iff $e(\phi)=1$ under every valuation. Moreover, $\Gamma\subseteq\Ltriangle$ \emph{entails} $\chi\in\Ltriangle$ ($\Gamma\models_{\biG}\chi$) iff for every valuation $e$, it holds that
\[\inf\{e(\phi):\phi\in\Gamma\}\leq e(\chi).\]
\end{definition}
It is now easy to see that $e(\triangle\phi)=e(\mathbf{1}\coimplies(\mathbf{1}\coimplies\phi))$ and $e(\phi\coimplies\chi)=e(\phi\wedge{\sim}\triangle(\phi\rightarrow\chi))$ for every $e$ as intended.

Finally, let us recall the Hilbert-style calculus for $\biG$ from~\cite{Baaz1996}.
\begin{definition}[$\HGtriangle$ --- Hilbert-style calculus for $\biG$]
The calculus has the following axiom schemas and rules (for any $\phi$, $\chi$,~$\psi$):
\begin{enumerate}
\item $(\phi\rightarrow\chi)\rightarrow((\chi\rightarrow\psi)\rightarrow(\phi\rightarrow\psi))$
\item $\phi\rightarrow(\phi\vee\chi)$; $\chi\rightarrow(\phi\vee\chi)$; $(\phi\rightarrow\psi)\rightarrow((\chi\rightarrow\psi)\rightarrow((\phi\vee\chi)\rightarrow\psi))$
\item $(\phi\wedge\chi)\rightarrow\phi$; $(\phi\wedge\chi)\rightarrow\chi$; $(\phi\rightarrow\chi)\rightarrow((\phi\rightarrow\psi)\rightarrow(\phi\rightarrow(\chi\wedge\psi)))$
\item $(\phi\rightarrow(\chi\rightarrow\psi))\rightarrow((\phi\wedge\chi)\rightarrow\psi)$; $((\phi\wedge\chi)\rightarrow\psi)\rightarrow(\phi\rightarrow(\chi\rightarrow\psi))$
\item ${\sim}\phi\rightarrow(\phi\rightarrow\chi)$
\item $(\phi\rightarrow\chi)\vee(\chi\rightarrow\phi)$
\item $\triangle\phi\vee{\sim}\triangle\phi$
\item $\triangle(\phi\rightarrow\chi)\rightarrow(\triangle\phi\rightarrow\triangle\chi)$; $\triangle(\phi\vee\chi)\rightarrow(\triangle\phi\vee\triangle\chi)$
\item $\triangle\phi\rightarrow\phi$; $\triangle\phi\rightarrow\triangle\triangle\phi$
\item[MP] $\dfrac{\phi\quad\phi\rightarrow\chi}{\chi}$
\item[$\triangle$nec] $\dfrac{\vdash\phi}{\vdash\triangle\phi}$
\end{enumerate}
Let $\Gamma\subseteq\Ltriangle$. We define the notion \emph{$\psi$ depends on $\Gamma$ in the derivation} by the following recursion:
\begin{itemize}
\item if $\psi\in\Gamma$, then $\psi$ depends on $\Gamma$;
\item if $\chi$ depends on $\Gamma$ and is a~premise of the rule by which $\psi$ is obtained, then $\psi$ depends on $\Gamma$.
\end{itemize}

We can now define the derivability relation $\vdash_{\HGtriangle}$. $\Gamma\subseteq\Ltriangle$ \emph{derives} $\chi$ ($\Gamma\vdash_{\HGtriangle}\chi$) iff there is a finite sequence of formulas $\phi_1,\ldots,\phi_n,\chi$ each of which is a member of $\Gamma$, an instance of an axiom schema, obtained from the previous ones by MP, or obtained by an application of $\triangle$nec to a~previous formula that does not depend on $\Gamma$. If $\varnothing\vdash_{\HGtriangle}\phi$, we write $\HGtriangle\vdash\phi$ and say that $\phi$ is \emph{provable} or that $\phi$ is \emph{a~theorem of $\HGtriangle$}.
\end{definition}
\begin{remark}\label{rem:trianglededuction}
Note that it is crucial for the soundness of $\HGtriangle$ that $\triangle$nec is applied only to theorems. Otherwise, $p\vdash_{\HGtriangle}\triangle p$ would be derivable which, of course, is not a~valid instance of entailment.
\end{remark}
\begin{proposition}[{\cite[Theorem~3.1]{Baaz1996}}]\label{prop:HbiGcompleteness}
$\HGtriangle$ is strongly complete: for any $\Gamma\cup\{\phi\}\subseteq\Ltriangle$, it holds that
\[\Gamma\vdash_{\HGtriangle}\phi\text{ iff }\Gamma\models_{\biG}\phi\]
\end{proposition}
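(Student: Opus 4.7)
The plan is to prove both directions separately, with soundness being a straightforward algebraic verification and completeness proceeding via a linear Lindenbaum-type construction adapted to the restricted $\triangle$nec rule. For soundness, I would check that each axiom evaluates to $1$ under every $\biG$-valuation using Definition~\ref{def:biGalgebra}: axioms 1--6 are the usual G\"{o}del axioms validated by the linearity of $[0,1]$, while axioms 7--9 reflect that $\triangle_\mathsf{G}$ takes only the values $0$ and $1$ and is idempotent. MP is obviously sound, and $\triangle$nec is sound precisely because it is restricted to formulas not depending on $\Gamma$: such a formula is a theorem, hence evaluates to $1$ everywhere, whence its $\triangle$ does too; the necessity of this restriction is exactly what Remark~\ref{rem:trianglededuction} warns against.

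For completeness, the key preliminary tool is a modified deduction theorem of the form
\[\Gamma,\phi\vdash_{\HGtriangle}\chi\quad\text{iff}\quad\Gamma\vdash_{\HGtriangle}\triangle\phi\rightarrow\chi.\]
This is proven by induction on derivations: applications of $\triangle$nec to a formula depending on $\phi$ are handled using axioms~9 (so $\triangle\triangle\phi\rightarrow\triangle\phi$ moves $\triangle$s around) together with the $\triangle$-distribution axioms~8. This modified deduction theorem replaces the ordinary one and is what makes Lindenbaum-style arguments go through despite the $\triangle$nec restriction.

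Next, assume $\Gamma\not\vdash_{\HGtriangle}\phi$. Using the modified deduction theorem and axiom~6 (prelinearity) together with axiom~7 ($\triangle\psi\vee{\sim}\triangle\psi$), I would extend $\Gamma$ step by step through an enumeration of $\Ltriangle$ to a theory $T\supseteq\Gamma$ satisfying: (i)~$T\not\vdash_{\HGtriangle}\phi$; (ii)~\emph{linearity}, i.e.\ $(\psi\rightarrow\chi)\in T$ or $(\chi\rightarrow\psi)\in T$ for all $\psi,\chi$; and (iii)~\emph{primeness}, i.e.\ $\psi\vee\chi\in T$ implies $\psi\in T$ or $\chi\in T$. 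Primeness is obtained by the usual disjunctive trick using axiom~2, and linearity comes from prelinearity plus primeness. Define $\psi\equiv_T\chi$ iff $(\psi\rightarrow\chi)\wedge(\chi\rightarrow\psi)\in T$; the quotient $\Ltriangle/{\equiv_T}$ ordered by $[\psi]\leq[\chi]$ iff $(\psi\rightarrow\chi)\in T$ is then a countable bi-G\"{o}del \emph{chain} with top $[\mathbf{1}]$ and bottom $[\mathbf{0}]$, and the quotient map is a $\biG$-homomorphism.

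Finally, I would embed this countable chain order-densely into $[0,1]$ (preserving $0$, $1$, and all existing suprema/infima), and verify that the resulting composition $e:\Ltriangle\rightarrow[0,1]_\biG$ is a $\biG$-homomorphism. The G\"{o}del connectives $\wedge,\vee,\rightarrow,\sim$ are preserved because the embedding of a countable G\"{o}del chain into $[0,1]$ is standard; $\triangle$ is preserved because axiom~7 forces $[\triangle\psi]\in\{[\mathbf{0}],[\mathbf{1}]\}$, aligning with $\triangle_\mathsf{G}$. Since $\phi\notin T$ and $T$ is linear and prime, we get $e(\phi)<1$, while $e(\gamma)=1$ for every $\gamma\in\Gamma\subseteq T$, so $\inf_{\gamma\in\Gamma}e(\gamma)=1>e(\phi)$, contradicting $\Gamma\models_\biG\phi$. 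The main obstacle is the modified deduction theorem: making sure the inductive case for $\triangle$nec really can be absorbed into $\triangle\phi\rightarrow(\cdot)$ using only axioms 8--9 and the restriction that the necessitated formula does not depend on $\phi$.
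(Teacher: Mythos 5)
The paper does not actually prove this proposition --- it imports it from Baaz --- so your argument can only be judged on its own terms. It contains two genuine gaps, both traceable to treating $\models_{\biG}$ as preservation of the value $1$ rather than as preservation of the order on $[0,1]$.

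First, the ``modified deduction theorem'' $\Gamma,\phi\vdash_{\HGtriangle}\chi$ iff $\Gamma\vdash_{\HGtriangle}\triangle\phi\rightarrow\chi$ is false for this calculus: its right-to-left direction with $\Gamma=\varnothing$ and $\chi=\triangle\phi$ turns the theorem $\triangle\phi\rightarrow\triangle\phi$ into $\phi\vdash_{\HGtriangle}\triangle\phi$, which is exactly what Remark~\ref{rem:trianglededuction} rules out (and what soundness forbids: take $e(p)=\tfrac{1}{2}$). That form of the deduction theorem belongs to the variant with \emph{unrestricted} $\triangle$-necessitation and $1$-preserving consequence; here, precisely because $\triangle$nec is restricted to formulas not depending on the premises, the \emph{ordinary} deduction theorem holds (cf.~\eqref{equ:HfuzzyKbiGdeductiontheorem} for the modal extension), and that is what the prime-extension argument should use --- your inductive case for $\triangle$nec applied to a formula depending on $\phi$ never arises.

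Second, and more fatally, the endgame cannot work: for the order-based entailment one cannot in general produce $e$ with $e[\Gamma]\subseteq\{1\}$ and $e(\phi)<1$. Take $\Gamma=\{p\}$ and $\phi=\triangle p$: then $\Gamma\nvdash_{\HGtriangle}\phi$ and $\Gamma\not\models_{\biG}\phi$, yet every valuation with $e(p)=1$ forces $e(\triangle p)=1$; the only countermodels place $e(p)$ strictly between $e(\triangle p)=0$ and $1$. The same defect surfaces inside your Lindenbaum--Tarski quotient: for the quotient map to commute with $\triangle$ you need $\psi\in T\Leftrightarrow\triangle\psi\in T$, and the left-to-right half is exactly closure of $T$ under the rule $\psi/\triangle\psi$, which is unsound for non-theorems and not available; so there will be $\psi\in T$ with ${\sim}\triangle\psi\in T$, and your map sends $\psi$ to $1$ and $\triangle\psi$ to $0$, i.e.\ it is not a $\biG$-homomorphism. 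Your construction does yield \emph{weak} completeness (for finite $\Gamma$, reduce to theoremhood of $\bigwedge\Gamma\rightarrow\phi$ via the ordinary deduction theorem and refute a single non-theorem). Strong completeness over the order-based consequence needs an additional idea: either a compactness argument such as the first-order translation used in the proof of Theorem~\ref{theorem:KbiGstrongcompleteness}, or a construction that places all of $\Gamma$ above some cut $d<1$ while keeping $e(\phi)$ strictly below it, rather than forcing $\Gamma$ to the top.
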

\begin{convention}
In what follows, given a calculus $\mathcal{H}$, we use $\Th(\mathcal{H})$ to denote the set of its \emph{theorems}, i.e., formulas that can be proven in $\mathcal{H}$ without assumptions.
\end{convention}
\subsection{$\KbiG$: language and semantics}
Let us now introduce the logic $\KbiG$. The language $\bimodalLtriangle$ of $\KbiG$ expands $\Ltriangle$ with two additional modal operators: $\Box$ and $\lozenge$. We will further use $\bimodalL$ to denote the $\triangle$-free fragment of $\bimodalLtriangle$ and $\KG$ to denote the G\"{o}del modal logic, i.e., the $\bimodalL$ fragment of $\KbiG$.
\begin{definition}[Frames]\label{def:frames}~
\begin{itemize}
\item A \emph{fuzzy frame} is a tuple $\mathfrak{F}=\langle W,R\rangle$ with $W\neq\varnothing$ and $R:W\times W\rightarrow[0,1]$.
\item A \emph{crisp frame} is a tuple $\mathfrak{F}=\langle W,R\rangle$ with $W\neq\varnothing$ and $R\subseteq W\times W$ (or, equivalently, with $R:W\times W\rightarrow\{0,1\}$).
\end{itemize}
\end{definition}
\begin{definition}[$\KbiG$ semantics]\label{def:KbiGsemantics}
A \emph{$\KbiG$ model} is a tuple $\mathfrak{M}=\langle W,R,e\rangle$ with $\langle W,R\rangle$ being a~(crisp or fuzzy) frame, and $e:\Prop\times W\rightarrow[0,1]$. $e$ (a valuation) is extended on complex $\bimodalLtriangle$ formulas according to Definition~\ref{def:biGalgebra} in the cases of propositional connectives:
\begin{align*}
e(\sharp\phi,w)&=\sharp_\mathsf{G}(e(\phi,w))&e(\phi\circ\phi',w)&=e(\phi,w)\circ_\mathsf{G}e(\phi',w)\tag{$\sharp\in\{\sim,\triangle\}$, $\circ\in\{\wedge,\vee,\rightarrow\}$}
\end{align*}
The interpretation of modal formulas is as follows:
\begin{align*}
e(\Box\phi,w)&=\inf\limits_{w'\in W}\{wRw'\rightarrow_\mathsf{G}e(\phi,w')\}&e(\lozenge\phi,w)&=\sup\limits_{w'\in W}\{wRw'\wedge_\mathsf{G}e(\phi,w')\}.
\end{align*}
We say that $\phi\in\bimodalLtriangle$ is \emph{$\KbiG$-valid on a~pointed frame $\langle\mathfrak{F},w\rangle$} ($\mathfrak{F},w\models_{\KbiG}\phi$) iff $e(\phi,w)=1$ for any model $\mathfrak{M}$ on $\mathfrak{F}$. We call $\phi$ \emph{$\KbiG$-valid on frame $\mathfrak{F}$} ($\mathfrak{F}\models_{\KbiG}\phi$) iff $\mathfrak{F},w\models_{\KbiG}\phi$ for any $w\in\mathfrak{F}$.

$\Gamma$ \emph{entails} $\chi$ (on $\mathfrak{F}$), denoted $\Gamma\models_{\KbiG}\chi$ ($\Gamma\models^{\mathfrak{F}}_{\KbiG}\chi$), iff for every model $\mathfrak{M}$ (on $\mathfrak{F}$) and every $w\in\mathfrak{M}$, it holds that
\[\inf\{e(\phi,w):\phi\in\Gamma\}\leq e(\chi,w).\]
\end{definition}
\begin{convention}
Let $\mathfrak{F}=\langle W,R\rangle$ be a crisp frame and $\mathfrak{H}=\langle U,S\rangle$ a fuzzy frame. We set
\begin{align*}
R(w)&=\{w':wRw'\}&S(u)&=\{u':uSu'>0\}
\end{align*}
\end{convention}

We have already discussed in the introduction that $\triangle$ can be used to define \emph{strict} order on values (while $\rightarrow$ can only define \emph{non-strict} order). In addition, $\triangle$ enhances the expressivity of $\Box$ and $\lozenge$ fragments of $\KG$. In particular, while the $\lozenge$ (fuzzy) fragment of $\KG$ has the finite model property~\cite[Theorem~7.1]{CaicedoRodriguez2010}, $\triangle\lozenge p\rightarrow\lozenge\triangle p$ \emph{does not have} finite countermodels~\cite[Proposition~2.10.2]{BilkovaFrittellaKozhemiachenko2023IGPL}. Likewise, while the $\Box$ fragment of $\KG$ is complete both w.r.t.\ crisp and fuzzy frames~\cite[Theorem~4.2]{CaicedoRodriguez2010}, $\triangle\Box p\rightarrow\Box\triangle p$ \emph{defines} crisp frames~\cite[Proposition~2.10.1]{BilkovaFrittellaKozhemiachenko2023IGPL}. Moreover, we can show that there are frame properties that cannot be defined without $\triangle$ in the bi-modal $\KG$.
\begin{proposition}\label{prop:triangleonlyfuzzymodels}
There are $\KbiG$-definable classes of fuzzy frames that are not $\KG$-definable
\end{proposition}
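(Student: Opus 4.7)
The plan is to exhibit as witness the class $\mathcal{C}$ of fuzzy frames validating the $\KbiG$-formula $\phi_\triangle \coloneqq \triangle\lozenge p \to \lozenge\triangle p$. By construction $\mathcal{C}$ is $\KbiG$-definable, so the content of the statement reduces to showing that no $\bimodalL$-formula defines $\mathcal{C}$.

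First I would extract two features of $\mathcal{C}$ from \cite[Proposition~2.10.2]{BilkovaFrittellaKozhemiachenko2023IGPL}, which is already recalled immediately above the statement: on the one hand, $\phi_\triangle$ has no finite countermodels, so every finite fuzzy frame belongs to $\mathcal{C}$; on the other, the same reference supplies an infinite fuzzy frame $\mathfrak{F}_\infty$ (essentially a single world with countably many successors whose $R$-values and $p$-values approach $1$ without attaining it) that falsifies $\phi_\triangle$, whence $\mathfrak{F}_\infty \notin \mathcal{C}$. The gap between finite and infinite that $\mathcal{C}$ records is precisely what I intend to exploit.

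Now I would suppose for contradiction that some $\psi \in \bimodalL$ defines $\mathcal{C}$. Since all finite fuzzy frames lie in $\mathcal{C}$, $\psi$ is $\KG$-valid on every finite fuzzy frame. Invoking the finite model property of the fuzzy $\lozenge$-fragment of $\KG$ from \cite{CaicedoRodriguez2010}, together with a lift to the full bi-modal $\fuzzyKG$ extracted from the bounded-model $\pspace$ machinery of \cite{CaicedoMetcalfeRodriguezRogger2013,CaicedoMetcalfeRodriguezRogger2017}, I would conclude that $\psi$ is $\KG$-valid on every fuzzy frame, and in particular on $\mathfrak{F}_\infty$. This forces $\mathfrak{F}_\infty \in \mathcal{C}$, a contradiction.

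The principal delicacy is this FMP lift from the $\lozenge$-fragment to the full bi-modal $\fuzzyKG$: if a clean citation is unavailable, I would tailor a bisimulation-style argument to $\mathfrak{F}_\infty$ itself, showing by induction on $\psi$ that its value at the designated point of $\mathfrak{F}_\infty$ agrees with its value at the root of a sufficiently large finite truncation of $\mathfrak{F}_\infty$. As a backup class, should the FMP route prove awkward, one may use the class of crisp fuzzy frames---$\KbiG$-definable via $\triangle\Box p \to \Box\triangle p$ by \cite[Proposition~2.10.1]{BilkovaFrittellaKozhemiachenko2023IGPL}---and instead exhibit directly a crisp frame together with a fuzzy non-crisp frame that validate identical $\bimodalL$-formulas, using the order-sensitivity (rather than magnitude-sensitivity) of the G\"odel connectives and modalities.
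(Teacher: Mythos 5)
There is a genuine gap at the crux of your argument: the finite model property does \emph{not} lift from the $\lozenge$-fragment to the full bi-modal $\fuzzyKG$ --- it already fails for the $\Box$-fragment. The $\triangle$-free formula $\Box{\sim}{\sim}p\rightarrow{\sim}{\sim}\Box p$ is valid on every \emph{finite} fuzzy frame (both sides take values in $\{0,1\}$; in a finite frame $e(\Box p,w)=0$ means the infimum is attained at some $w'$ with $wRw'>0$ and $e(p,w')=0$, which forces $e(\Box{\sim}{\sim}p,w)=0$ as well), yet it is refuted at the root of a single world with countably many successors $v_i$, $wRv_i=1$ and $e(p,v_i)=\tfrac{1}{i}$. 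Hence for $\psi\in\bimodalL$, ``valid on all finite fuzzy frames'' does not entail ``valid on all fuzzy frames'', and the contradiction you derive does not follow. The same example defeats the fallback truncation/bisimulation argument, because the guiding heuristic --- that $\bimodalL$ cannot separate the finite frames from an infinite one --- is itself false: ${\sim}{\sim}\Box(p\vee{\sim}p)$ is $\triangle$-free and defines the finitely-branching frames (cf.\ Proposition~\ref{prop:finitebranching} and the references preceding it), so it already separates every finite frame from any infinitely-branching frame, and any countermodel to $\triangle\lozenge p\rightarrow\lozenge\triangle p$ is necessarily infinitely branching. Your argument therefore leaves entirely open whether the class of frames validating $\triangle\lozenge p\rightarrow\lozenge\triangle p$ is $\KG$-definable.

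The paper sidesteps finiteness altogether: it takes $\tau={\sim}\triangle\lozenge\mathbf{1}\wedge{\sim}\Box\mathbf{0}$, which defines the class of fuzzy frames in which $0<\sup\{uRu':u'\in\mathfrak{F}\}<1$ at every world, and observes that membership amounts to $e(\lozenge\mathbf{1}\vee\Box\mathbf{0},w)<1$ everywhere --- a strict upper bound on a quantity ranging over all of $[0,1]$, which no $\triangle$- and $\coimplies$-free formula can express. Your second backup (crisp frames via $\triangle\Box p\rightarrow\Box\triangle p$, plus a crisp frame and a properly fuzzy frame validating the same $\bimodalL$-formulas) is closer in spirit to a workable route, but as written it is only a plan: you would still need to exhibit the two frames and actually prove their $\bimodalL$-equivalence.
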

\begin{proof}
Consider $\tau={\sim}\triangle\lozenge\mathbf{1}\wedge{\sim}\Box\mathbf{0}$. It is clear that
\begin{align}
\mathfrak{F}\models_{\KbiG}\tau&\text{ iff }\forall u\in\mathfrak{F}:0<\sup\{uRu':u'\in\mathfrak{F}\}<1
\label{equ:properfuzzyseriality}
\end{align}
Denote the class of frames satisfying~\eqref{equ:properfuzzyseriality} with $\mathbb{S}$. Observe now that $\mathfrak{F}\in\mathbb{S}$ iff $e(\lozenge\mathbf{1}\vee\Box\mathbf{0},w)<1$ in every $w\in\mathfrak{F}$ since it is always the case that $e(\Box\mathbf{0},w),e({\sim}\triangle\lozenge\mathbf{1},w)\in\{0,1\}$ and since $e({\sim}\triangle\lozenge\mathbf{1},w)=1$ iff $e(\lozenge\mathbf{1},w)<1$. On the other hand, $\lozenge\mathbf{1}$ can take any value from $[0,1]$ on a~fuzzy frame. But there is no $\bimodalL$ formula that is true iff $e(\lozenge\mathbf{1}\vee\Box\mathbf{0},w)<1$ because $\lozenge\mathbf{1}\vee\Box\mathbf{0}$ can have any value from $0$ to $1$; thus to express that it has a value less than $1$, one needs $\triangle$ or $\coimplies$. Thus, $\mathbb{S}$ is not $\KG$-definable.
\end{proof}
\section{Axiomatisation of the fuzzy $\KbiG$\label{sec:HKbiGf}}
Before proceeding to the axiomatisation of $\fuzzyKbiG$, let us recall the axiomatisation of the crisp $\KbiG$ from~\cite{BilkovaFrittellaKozhemiachenko2023IGPL}.
\begin{definition}[$\HcrispKbiG$ --- Hilbert-style calculus for $\crispKbiG$]\label{def:HcrispKbiG}
The calculus has the following axiom schemas and rules.
\begin{description}
\item[$\biG$:] All substitution instances of $\HGtriangle$ theorems and rules.
\item[$\mathbf{0}$:] ${\sim}\lozenge\mathbf{0}$
\item[K:] $\Box(\phi\rightarrow\chi)\rightarrow(\Box\phi\rightarrow\Box\chi)$; $\lozenge(\phi\vee\chi)\rightarrow(\lozenge\phi\vee\lozenge\chi)$
\item[FS:] $\lozenge(\phi\rightarrow\chi)\rightarrow(\Box\phi\rightarrow\lozenge\chi)$; $(\lozenge\phi\rightarrow\Box\chi)\rightarrow\Box(\phi\rightarrow\chi)$
\item[${\sim}\triangle\lozenge$:] ${\sim}\triangle(\lozenge\phi\rightarrow\lozenge\chi)\rightarrow\lozenge{\sim}\triangle(\phi\rightarrow\chi)$
\item[Cr:] $\Box(\phi\vee\chi)\rightarrow(\Box\phi\vee\lozenge\chi)$; $\triangle\Box\phi\rightarrow\Box\triangle\phi$
\item[nec:] $\dfrac{\vdash\phi}{\vdash\Box\phi}$; $\dfrac{\vdash\phi\rightarrow\chi}{\vdash\lozenge\phi\rightarrow\lozenge\chi}$
\end{description}
\end{definition}

In the list above, $\mathbf{FS}$ denotes two Fisher Servi axioms~\cite{FisherServi1984}. In (super-)intuitionistic modal logics, they guarantee that $\Box$ and $\lozenge$ are defined w.r.t.\ the same accessibility relation.

It is also clear that \textbf{Cr} axioms are not $\fuzzyKbiG$-valid since they define crisp frames. It is also easy to check that ${\sim}\triangle\lozenge$ is valid only on crisp frames as well. Indeed, consider Fig.~\ref{fig:simtrianglediamondcounterexample}: $e(\lozenge{\sim}\triangle(p\rightarrow q),w_0)=\frac{1}{2}$ (since $w_0Rw_1=\frac{1}{2}$) but $e({\sim}\triangle(\lozenge p\rightarrow\lozenge q),w_0)=1$.
\begin{figure}
\centering
\[\xymatrix{w_0\ar[r]^(.3){\frac{1}{2}}&w_1:\txt{$p=\frac{1}{3}$\\$q=\frac{1}{4}$}}\]
\caption{A fuzzy model falsifying ${\sim}\triangle(\lozenge p\rightarrow\lozenge q)\rightarrow\lozenge{\sim}\triangle(p\rightarrow q)$ at $w_0$.}
\label{fig:simtrianglediamondcounterexample}
\end{figure}
In fact, the following statement holds.
\begin{proposition}\label{prop:simtrianglediamondredundancy}
${\sim}\triangle\lozenge$ is redundant in $\HcrispKbiG$.
\end{proposition}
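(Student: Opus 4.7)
The plan is to derive ${\sim}\triangle\lozenge$ from the remaining axioms by exploiting the fact that $\Box\triangle(\phi\rightarrow\chi)$ takes only the values $0$ and $1$; its being $1$ forces $\phi\leq\chi$ at every accessible world, and hence $\lozenge\phi\leq\lozenge\chi$. Converting this semantic observation into a syntactic derivation requires showing that $\Box\triangle(\phi\rightarrow\chi)$ is provably equivalent to its own $\triangle$, which is the main technical hurdle.

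First, applying $\Box$-necessitation to the prelinearity axiom $\triangle(\phi\rightarrow\chi)\vee{\sim}\triangle(\phi\rightarrow\chi)$ and distributing via the first $\mathbf{Cr}$ axiom yields $\vdash\Box\triangle(\phi\rightarrow\chi)\vee\lozenge{\sim}\triangle(\phi\rightarrow\chi)$. Independently, an instance of the first Fisher--Servi axiom $\lozenge(\alpha\rightarrow\beta)\rightarrow(\Box\alpha\rightarrow\lozenge\beta)$ with $\alpha:=\triangle(\phi\rightarrow\chi)$ and $\beta:=\mathbf{0}$, together with the axiom $\mathbf{0}$ (namely ${\sim}\lozenge\mathbf{0}$), gives $\vdash\lozenge{\sim}\triangle(\phi\rightarrow\chi)\rightarrow{\sim}\Box\triangle(\phi\rightarrow\chi)$. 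Combining these two facts via $\biG$-reasoning produces $\vdash\Box\triangle(\phi\rightarrow\chi)\vee{\sim}\Box\triangle(\phi\rightarrow\chi)$, so that $\Box\triangle(\phi\rightarrow\chi)$ behaves classically.

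Next, applying $\triangle$-necessitation to this disjunction, together with the second part of axiom 8 of $\HGtriangle$ and the $\biG$-derivable equivalence $\triangle{\sim}\alpha\leftrightarrow{\sim}\alpha$, yields $\vdash\triangle\Box\triangle(\phi\rightarrow\chi)\vee{\sim}\Box\triangle(\phi\rightarrow\chi)$. The $\biG$-derivable scheme $\beta\vee{\sim}\alpha\rightarrow(\alpha\rightarrow\beta)$ then produces the key lemma $\vdash\Box\triangle(\phi\rightarrow\chi)\rightarrow\triangle\Box\triangle(\phi\rightarrow\chi)$, expressing the crispness of $\Box\triangle(\phi\rightarrow\chi)$.

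For the last leg, I invoke the Fisher--Servi theorem $\Box(\phi\rightarrow\chi)\rightarrow(\lozenge\phi\rightarrow\lozenge\chi)$, a standard consequence of $\mathbf{FS}$ and $\mathbf{K}$, and chain it with $\Box\triangle(\phi\rightarrow\chi)\rightarrow\Box(\phi\rightarrow\chi)$ (which comes from $\triangle\psi\rightarrow\psi$ and $\Box$-monotonicity) to obtain $\vdash\Box\triangle(\phi\rightarrow\chi)\rightarrow(\lozenge\phi\rightarrow\lozenge\chi)$. Applying $\triangle$-necessitation, the first part of axiom 8, and then the key lemma, gives $\vdash\Box\triangle(\phi\rightarrow\chi)\rightarrow\triangle(\lozenge\phi\rightarrow\lozenge\chi)$. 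Combined with the initial disjunction from the first paragraph, this yields $\vdash\triangle(\lozenge\phi\rightarrow\lozenge\chi)\vee\lozenge{\sim}\triangle(\phi\rightarrow\chi)$; since $\triangle\alpha\vee\beta$ is $\biG$-equivalent to ${\sim}\triangle\alpha\rightarrow\beta$, the desired axiom ${\sim}\triangle(\lozenge\phi\rightarrow\lozenge\chi)\rightarrow\lozenge{\sim}\triangle(\phi\rightarrow\chi)$ follows. The main obstacle throughout is the key lemma, since isolating it requires carefully coordinating the first $\mathbf{Cr}$ axiom, the first $\mathbf{FS}$ axiom, and the axiom $\mathbf{0}$; all remaining steps reduce to routine manipulations in $\HGtriangle$.
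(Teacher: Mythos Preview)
Your argument is correct. Both proofs pass through the intermediate implication $\Box\triangle(\phi\rightarrow\chi)\rightarrow\triangle(\lozenge\phi\rightarrow\lozenge\chi)$, but reach it and leave it by different routes. The paper first reduces the target by contraposition and the $\HKG$ equivalence ${\sim}\lozenge\psi\leftrightarrow\Box{\sim}\psi$ (together with $\lozenge{\sim}p\leftrightarrow{\sim\sim}\lozenge{\sim}p$, valid on crisp frames) to exactly that implication, and then discharges it by citing the externally proved lemma $\Box\triangle\phi\rightarrow\triangle\Box\phi$ from~\cite[Proposition~3.1]{BilkovaFrittellaKozhemiachenko2023IGPL}, chained with $\triangle\Box(\phi\rightarrow\chi)\rightarrow\triangle(\lozenge\phi\rightarrow\lozenge\chi)$. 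You instead obtain the disjunction $\Box\triangle\psi\vee\lozenge{\sim}\triangle\psi$ directly from the first $\mathbf{Cr}$ axiom applied to $\triangle\psi\vee{\sim}\triangle\psi$, show that $\Box\triangle\psi$ is Boolean (via $\mathbf{FS}$ and the axiom~$\mathbf{0}$), derive your key lemma $\Box\triangle\psi\rightarrow\triangle\Box\triangle\psi$ internally, and finally convert the disjunction into the desired implication using $\triangle\alpha\vee\beta\leftrightarrow({\sim}\triangle\alpha\rightarrow\beta)$. Your approach is fully self-contained and makes the role of $\mathbf{Cr}$ explicit; the paper's approach is shorter on the page but leans on a result proved elsewhere and on the contrapositive rewriting, while isolating the more general fact $\Box\triangle\phi\rightarrow\triangle\Box\phi$ as the reusable kernel.
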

\begin{proof}
First, observe that $\lozenge{\sim}p\leftrightarrow{\sim\sim}\lozenge{\sim}p$ is $\crispKG$-valid (and thus, $\HcrispKG$-provable). Thus, it suffices to check that ${\sim}\triangle(\lozenge p\rightarrow\lozenge q)\rightarrow{\sim\sim}\lozenge{\sim}\triangle(p\rightarrow q)$ is provable. Now recall that ${\sim}\phi\rightarrow{\sim}\chi$ is $\HGtriangle$-provable iff ${\sim\sim}\chi\rightarrow{\sim\sim}\phi$ is provable. Thus, we need to prove ${\sim\sim\sim}\lozenge{\sim}\triangle(p\rightarrow q)\rightarrow{\sim\sim}\triangle(\lozenge p\rightarrow\lozenge q)$. But $\HGtriangle\vdash{\sim\sim\sim}\phi\leftrightarrow{\sim}\phi$ and $\HGtriangle\vdash{\sim\sim}\triangle\phi\leftrightarrow\triangle\phi$, whence, we reduce our task to the proof of ${\sim}\lozenge{\sim}\triangle(p\rightarrow q)\rightarrow\triangle(\lozenge p\rightarrow\lozenge q)$. Finally, $\HKG\vdash{\sim}\lozenge\phi\leftrightarrow\Box{\sim}\phi$, whence, we need to prove $\Box\triangle(p\rightarrow q)\rightarrow\triangle(\lozenge p\rightarrow\lozenge q)$.

To do this, recall~\cite[Proposition~3.1]{BilkovaFrittellaKozhemiachenko2023IGPL} that $\Box\triangle\phi\rightarrow\triangle\Box\phi$ is provable in $\HcrispKbiG$ \emph{without the use of $\mathbf{Cr}$ and ${\sim}\triangle\lozenge$} and that $\HKG\vdash\Box(p\rightarrow q)\rightarrow(\lozenge p\rightarrow\lozenge p)$ (whence, $\triangle\Box(p\rightarrow q)\rightarrow\triangle(\lozenge p\rightarrow\lozenge p)$ is provable using $\triangle$nec). Thus, using the transitivity of $\rightarrow$, we obtain $\Box\triangle(p\rightarrow q)\rightarrow\triangle(\lozenge p\rightarrow\lozenge q)$, as required.
\end{proof}

The above statement makes one inquire whether the axiomatisation of $\fuzzyKbiG$ is obtained by removing $\mathbf{Cr}$ from $\HcrispKbiG$. In the remainder of the section, we show that this is indeed the case.
\begin{definition}[$\HfuzzyKbiG$ --- Hilbert-style calculus for $\fuzzyKbiG$]\label{def:HKfuzzybiG}
The calculus has the following axiom schemas and rules.
\begin{description}
\item[$\biG$:] All substitution instances of $\HGtriangle$ theorems and rules.
\item[$\mathbf{0}$:] ${\sim}\lozenge\mathbf{0}$
\item[K:] $\Box(\phi\rightarrow\chi)\rightarrow(\Box\phi\rightarrow\Box\chi)$; $\lozenge(\phi\vee\chi)\rightarrow(\lozenge\phi\vee\lozenge\chi)$
\item[FS:] $\lozenge(\phi\rightarrow\chi)\rightarrow(\Box\phi\rightarrow\lozenge\chi)$; $(\lozenge\phi\rightarrow\Box\chi)\rightarrow\Box(\phi\rightarrow\chi)$
\item[nec:] $\dfrac{\vdash\phi}{\vdash\Box\phi}$; $\dfrac{\vdash\phi\rightarrow\chi}{\vdash\lozenge\phi\rightarrow\lozenge\chi}$
\end{description}
\end{definition}
Note that we do not add any modal axioms to the calculus for $\fuzzyKG$ (the only axioms we add to $\HKG$ are the propositional $\triangle$ axioms of $\HGtriangle$). Moreover (just as it is the case with $\HKG$), since modal rules can only be applied to theorems, it is clear that we can reduce $\HfuzzyKbiG$ proofs to propositional $\HGtriangle$ proofs using $\HfuzzyKbiG$ theorems as additional assumptions:
\begin{align}
\label{equ:propositionalreduction}
\Gamma\vdash_{\HfuzzyKbiG}\phi&\text{ iff }\Gamma,\Th(\HfuzzyKbiG)\vdash_{\HGtriangle}\phi
\end{align}
It is also easy to see that the deduction theorem holds for $\HfuzzyKbiG$:
\begin{align}
\label{equ:HfuzzyKbiGdeductiontheorem}
\Gamma,\phi\vdash_{\HfuzzyKbiG}\chi&\text{ iff }\Gamma\vdash_{\HfuzzyKbiG}\phi\rightarrow\chi
\end{align}

This is why our completeness proof utilises the same method as the proof of $\HKG$ completeness in~\cite{CaicedoRodriguez2015}. There is, however, an important difference between $\fuzzyKG$ and $\fuzzyKbiG$. Namely, the $\KG$ entailment is defined via the preservation of $1$ from premises to the assumption in all valuations~\cite[Definition~1.1]{CaicedoRodriguez2015}. However, the entailment as preservation of $1$ is not equivalent to the entailment as preservation of the order on $[0,1]$ in $\KbiG$. Indeed, 
$p\not\models_{\KbiG}\triangle p$, even though $e(\triangle p,w)=1$ in every valuation $e$ s.t.\ $e(p,w)=1$.

We proceed as follows. First, we prove the weak completeness of $\HfuzzyKbiG$ via a canonical model construction and the truth lemma. Then, we provide a translation into the classical first-order logic and use compactness to achieve the strong completeness result. The canonical model definition is the same as in~\cite{CaicedoRodriguez2015}.
\begin{definition}[Canonical (counter-)model of a formula]\label{def:canonicalmodel}
Let $\tau\in\bimodalLtriangle$ be s.t.\ $\HfuzzyKbiG\nvdash\tau$ and let $\Sfconst(\tau)=\{\mathbf{0},\mathbf{1}\}\cup\{\psi:\psi\text{ occurs in }\tau\text{ as a subformula}\}$. We define \emph{the canonical countermodel} of $\tau$, $\mathfrak{M}^\tau=\langle W^\tau,\mathsf{R}^\tau,e^\tau\rangle$ as follows.
\begin{itemize}
\item $W^\tau$ is the set of all $\Gtriangle$ homomorphisms $u:\bimodalLtriangle\rightarrow[0,1]_{\biG}$ s.t.\ all theorems of $\HfuzzyKbiG$ are evaluated at $1$.
\item $u\mathsf{R}^\tau u'=\inf\left\{(u(\Box\psi)\rightarrow_\mathsf{G}u'(\psi))\wedge_\mathsf{G}(u'(\psi)\rightarrow_\mathsf{G}u(\lozenge\psi)):\psi\in\Sfconst(\tau)\right\}$.
\item $e^\tau(p,u)=u(p)$.\end{itemize}
\end{definition}

In the definition above, we interpret ‘$\Gtriangle$-homomorphisms’ as the valuations of \emph{propositional} formulas built from variables and \emph{modalised formulas of the form $\Box\chi$ or $\lozenge\chi$}. We need to explicitly demand that all theorems be evaluated at $1$ since, for example, $\Box(p\rightarrow p)$ counts as a~‘variable’ in this setting. But it is a theorem of $\HfuzzyKbiG$, and thus, has to be evaluated at $1$ in every state of $W^\tau$. The definition of $\mathsf{R}^\tau$ is a~modification of the definition of the accessibility relation in the canonical models of classical modal logics (cf., e.g.,~\cite[Definition~4.18]{BlackburndeRijkeVenema2010}) for the fuzzy case.

The most important part of the truth lemma is the cases of modal formulas. The proofs follow the original ones of~\cite[Claim~1]{CaicedoRodriguez2015} (for the $\Box$ formulas) and~\cite[Claim~2]{CaicedoRodriguez2015} (for the $\lozenge$~for\-mu\-las) but to make the text self-contained, we give them here in full and then discuss the main differences between them and the original ones.
\begin{lemma}\label{lemma:truthlemmaBox}
Let $\alpha<1$, $\varepsilon>0$, and $v(\Box\phi)=\alpha$. Then there is $w\in W^\tau$ s.t.\ 
$w(\phi)<\alpha+\varepsilon$ and $v\mathsf{R}^\tau w>w(\phi)$.
\end{lemma}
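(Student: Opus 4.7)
The plan is to adapt the canonical-model existence argument of \cite[Claim~1]{CaicedoRodriguez2015} to the $\triangle$-enriched language, relying crucially on the reduction~(1) of $\HfuzzyKbiG$-derivability to propositional $\HGtriangle$-derivability modulo $\Th(\HfuzzyKbiG)$, the deduction theorem~(2), and the strong completeness of $\HGtriangle$ given by Proposition~\ref{prop:HbiGcompleteness}. Fix a rational $\beta$ with $\alpha<\beta<\alpha+\varepsilon$. It suffices to exhibit a $\HGtriangle$-homomorphism $w\colon\bimodalLtriangle\to[0,1]_{\biG}$ --- treating each modalised subformula $\Box\chi,\lozenge\chi\in\Sfconst(\tau)$ as an atom --- that sends every $\HfuzzyKbiG$-theorem to $1$, assigns $w(\phi)<\beta$, and for every $\psi\in\Sfconst(\tau)$ satisfies
\[v(\Box\psi)\to_{\mathsf{G}}w(\psi)\geq\beta \quad\text{and}\quad w(\psi)\to_{\mathsf{G}}v(\lozenge\psi)\geq\beta,\]
so that $w\in W^\tau$ and $v\mathsf{R}^\tau w\geq\beta>w(\phi)$.

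I would produce $w$ via Proposition~\ref{prop:HbiGcompleteness}: regarding modalised subformulas in $\Sfconst(\tau)$ as atoms pinned to the values $v(\Box\chi),v(\lozenge\chi)$, the existence of $w$ reduces to the satisfiability in $[0,1]_{\biG}$ of a \emph{finite} propositional theory in $\HGtriangle$ whose axioms comprise (the propositional content of) $\Th(\HfuzzyKbiG)$, the above threshold constraints for the finitely many $\psi\in\Sfconst(\tau)$, and a $\triangle$-encoded strict upper bound ``$w(\phi)<\beta$''. By~(1), the membership $w\in W^\tau$ is absorbed into the premise set.

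For contradiction, suppose no such $w$ exists. Proposition~\ref{prop:HbiGcompleteness} together with~(1) and~(2) then provides a finite $\Sigma\subseteq\Sfconst(\tau)$ and a $\HfuzzyKbiG$-derivation of a propositional implication of the shape
\[\bigwedge_{\psi\in\Sigma}\bigl((\Box\psi\to\psi)\wedge(\psi\to\lozenge\psi)\bigr)\longrightarrow\phi.\]
Applying $\Box$-necessitation followed by $\mathbf{K}$ and $\mathbf{FS}$ in the exact pattern of \cite{CaicedoRodriguez2015} lifts this to a derivation that, by soundness evaluated at $v$, forces $v(\Box\phi)\geq\beta$, contradicting $v(\Box\phi)=\alpha<\beta$.

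The main obstacle is the handling of the rational threshold $\beta$: in the $\triangle$-free proof of \cite{CaicedoRodriguez2015} the strict bound ``$<\beta$'' is threaded through by a careful choice of auxiliary formulas and monotonicity arguments, whereas here the presence of $\triangle$ both simplifies the propositional satisfiability step (strict inequalities are directly expressible, e.g.\ via ${\sim}\triangle(\cdot\to\cdot)$) and complicates the modal lifting, because $\triangle$-necessitation may only be applied to theorems and not to live assumptions (Remark~\ref{rem:trianglededuction}). Care will therefore be needed to segregate $\triangle$-scoped formulas into $\Th(\HfuzzyKbiG)$ before the modal step --- this is the delicate point that distinguishes our argument from the original one and that the final write-up will have to make precise.
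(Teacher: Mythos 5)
Your overall strategy---refute a derivability claim via Proposition~\ref{prop:HbiGcompleteness}, obtain a propositional valuation treating $\Box\chi,\lozenge\chi$ as atoms, and lift through $\mathbf{nec}$, $\mathbf{K}$, $\mathbf{FS}$---is the right skeleton, but two of your central steps do not go through as described. First, you cannot ``pin'' the atoms $\Box\chi,\lozenge\chi$ to the numerical values $v(\Box\chi),v(\lozenge\chi)$, nor encode the threshold constraints $v(\Box\psi)\to_{\mathsf{G}}w(\psi)\geq\beta$ as a propositional $\HGtriangle$-theory: the language $\Ltriangle$ has no truth constants for rationals, and G\"{o}del logic (even with $\triangle$) can only express \emph{relative order} between formula values, not absolute bounds. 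This is exactly why the actual proof encodes only order-theoretic information in a theory $\Gamma_{\phi,v}=\Gamma_1\cup\Gamma_2\cup\Gamma_3$ (with $\Gamma_1=\{\theta:v(\Box\theta)>\alpha\}$, $\Gamma_2=\{\theta_1\to\theta_2:v(\lozenge\theta_1)\leq v(\Box\theta_2)\}$, $\Gamma_3=\{(\theta_2\to\theta_1)\to\theta_1:v(\lozenge\theta_1)<v(\Box\theta_2)\}$), obtains a valuation $u$ satisfying only order conditions, and then \emph{rescales} $u$ by composing with a carefully built strictly increasing map $g:[0,1]\to[0,1]$ to place the values where the numerical inequalities demand. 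The rescaling step is not an optimisation; it is the only mechanism by which absolute positions are recovered, and it is absent from your plan. Second, your antecedent $\bigwedge_{\psi}\bigl((\Box\psi\to\psi)\wedge(\psi\to\lozenge\psi)\bigr)$ cannot be lifted: after necessitation and $\mathbf{K}$ you need a lower bound on $v\bigl(\Box(\Box\psi\to\psi)\bigr)$, and nothing controls this value. The three families $\Gamma_1,\Gamma_2,\Gamma_3$ are chosen precisely so that $v(\Box\gamma)>\alpha$ for each member---by definition for $\Gamma_1$, via the $\mathbf{FS}$ axiom $(\lozenge\theta_1\to\Box\theta_2)\to\Box(\theta_1\to\theta_2)$ for $\Gamma_2$, and via the validity of $(\Box\theta_2\to\lozenge\theta_1)\vee\Box((\theta_2\to\theta_1)\to\theta_1)$ for $\Gamma_3$.

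There is also a quantitative overreach. Because $\models_{\biG}$ is order-preservation rather than $1$-preservation, refuting $\Gamma_{\phi,v}\vdash_{\HfuzzyKbiG}\phi$ only yields a valuation $u$ with $\inf u[\Gamma_{\phi,v}]>u(\phi)$, \emph{not} $u[\Gamma_{\phi,v}]=1$; this is the whole point of the weakened conditions $\#1$--$\#4$ and of Remark~\ref{rem:whatisnewBox}. As a consequence one can only conclude $\inf\{v(\Box\theta)\to_{\mathsf{G}}w(\theta)\}>w(\phi)$, which is why the lemma asserts $v\mathsf{R}^\tau w>w(\phi)$ and not $v\mathsf{R}^\tau w\geq\beta$ for a pre-chosen $\beta>\alpha$. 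Your target inequality $v\mathsf{R}^\tau w\geq\beta>w(\phi)$ is stronger than what this method (and the lemma) delivers, and the mechanism you describe does not produce it. You correctly identify the restriction of $\triangle$nec to theorems as a delicate point, but the genuinely new difficulties lie elsewhere: in replacing the absolute constraints by order constraints plus rescaling, and in accepting the weaker conclusion forced by the order-based entailment.
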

\begin{proof}
The proof follows that of~\cite[Claim~1]{CaicedoRodriguez2015}. First, we produce $u\in W$ s.t.\ $u(\phi)<1$ and code the ordering conditions for $w$ with a~theory $\Gamma_{\phi,v}$. Then, we move the values $u(\theta)$ ($\theta\in\Sfconst$) to the correct valuation $w$ by composing $u$ with an increasing map of $[0,1]$ into itself.

Now, define
\begin{align}
\label{equ:truthlemmaBoxrefutingtheory}
\Gamma_1&=\{\theta:v(\Box\theta)>\alpha\text{ and }\theta\in\Sfconst(\tau)\}\nonumber\\
\Gamma_2&=\{\theta_1\rightarrow\theta_2:v(\lozenge\theta_1)\leq v(\Box\theta_2)\text{ and }\theta_1,\theta_2\in\Sfconst(\tau)\}\nonumber\\
\Gamma_3&=\{(\theta_2\rightarrow\theta_1)\rightarrow\theta_1:v(\lozenge\theta_1)<v(\Box\theta_2)\text{ and }\theta_1,\theta_2\in\Sfconst(\tau)\}\nonumber\\
\Gamma_{\phi,v}&=\Gamma_1\cup\Gamma_2\cup\Gamma_3
\end{align}
It is clear that $v(\Box\gamma)>\alpha$ for every $\gamma\in\Gamma_{\phi,v}$. Indeed, this holds w.r.t.\ $\Gamma_1$ by construction; for $\Gamma_2$, since $(\lozenge\theta_1\rightarrow\Box\theta_2)\rightarrow\Box(\theta_1\rightarrow\theta_2)$ is an instance of an axiom scheme and since $v((\lozenge\theta_1\rightarrow\Box\theta_2))=1$; for $\Gamma_3$, since $(\Box\theta_2\rightarrow\lozenge\theta_1)\vee\Box((\theta_2\rightarrow\theta_1)\rightarrow\theta_1)$ is $\fuzzyKG$-valid (whence, $\HKG$-provable and evaluated at $1$ in every state of the canonical model) but $v(\Box\theta_2\rightarrow\lozenge\theta_1)<1$, and thus, $v(\Box((\theta_2\rightarrow\theta_1)\rightarrow\theta_1))=1$.

It is easy to see now that $\Gamma_{\phi,v}\nvdash_{\HfuzzyKbiG}\phi$. Indeed, otherwise, $\Box\Gamma_{\phi,v}\vdash_{\HfuzzyKbiG}\Box\phi$ (using $\mathbf{nec}$ and~$\mathbf{K}$ for $\Box$), whence $\Box\Gamma_{\phi,v},\Th(\HfuzzyKbiG)\vdash_{\HGtriangle}\Box\phi$ by~\eqref{equ:propositionalreduction} (and by completeness of $\HGtriangle$, we have $\Box\Gamma_{\phi,v},\Th(\HfuzzyKbiG)\models_{\biG}\Box\phi$) which leads to a contradiction since $v[\Box\Gamma]>\alpha$, $v[\Th(\HfuzzyKbiG)]=1$ (recall that $\HfuzzyKbiG$ theorems are closed under $\triangle$nec) but $v(\Box\phi)=\alpha$. This means that there is a state $u\in W^\tau$ s.t.\ $1=u[\Th(\HfuzzyKbiG)]\geq u[\Gamma]>u(\phi)$.

Thus, the following relations between $v$ and $u$ hold. Observe that we cannot simply evaluate the assumptions of $\Gamma_{\phi,v}\nvdash_{\HfuzzyKbiG}\phi$ at $1$ (only theorems are guaranteed to be evaluated at $1$ since they are closed under $\triangle$-necessitation). Hence, some conditions are weaker than the original conditions $\#1$--$\#4$ in~\cite{CaicedoRodriguez2015} (the weakenings are \underline{underlined}).
\begin{description}
\item[$\#1$\label{item:no1}] If $v(\Box\theta)>\alpha$, \underline{then $u(\theta)>u(\phi)$}.
\item[$\#2$\label{item:no2}] If $v(\lozenge\theta_1)\leq v(\Box\theta_2)$, then $u(\theta_1)\leq u(\theta_2)$ \underline{or $u(\theta_1)>u(\theta_2)>u(\phi)$}.
\item[$\#3$\label{item:no3}] If $v(\lozenge\theta_1)<v(\Box\theta_2)$, then $u(\theta_1)>u(\theta_2)$, or $u(\theta_1)=u(\theta_2)=1$, \underline{or $u(\theta_2)\leq u(\theta_1)>u(\phi)$}.
\item[$\#4$\label{item:no4}] If $v(\Box\theta)>0$, then $u(\theta)>0$.
\end{description}

Consider now the \emph{contrapositions} of \nameref{item:no2} and \nameref{item:no3}.
\begin{description}
\item[$\#2'$\label{item:no2'}] $\Big(u(\theta_1)>u(\theta_2)\Big)~\&~\Big(u(\theta_1)\leq u(\theta_2)\text{ or }u(\theta_2)\leq u(\phi)\Big)\Rightarrow v(\lozenge\theta_1)>v(\Box\theta_2)$.
\item[$\#3'$\label{item:no3'}] $\Big(u(\theta_1)\!\leq\!u(\theta_2)\Big)~\&~\Big(\{u(\theta_1),u(\theta_2)\}\not\subseteq\{1\}\Big)~\&~\Big(u(\theta_2)\!>\!u(\theta_1)\text{ or }u(\theta_1)\leq u(\phi)\Big)\Rightarrow v(\lozenge\theta_1)\geq v(\Box\theta_2)$.
\end{description}
The left-hand side of~\nameref{item:no2'} can be simplified:
\begin{description}
\item[$\#2^\mathsf{c}$\label{item:no2c}] $u(\theta_1)>u(\theta_2)\Rightarrow v(\lozenge\theta_1)>v(\Box\theta_2)$.
\end{description}

Just as in the original proof, we set $\mathsf{B}=\{v(\Box\theta):\theta\in\Sfconst(\tau)\}$ and $u_b=\min\{u(\theta):v(\Box\theta)=b\}$ for every $b\in\mathsf{B}$. From here, we define
\begin{align}
b_0&=\alpha\nonumber\\
b_{i+1}&=\max\{b:b<b_i\text{ and }u_b<u_{b_i}\}
\label{equ:truthlemmaBoxdecreasingsequence}
\end{align}
which forms a~strictly decreasing sequence $\alpha=b_0>b_1>\ldots>b_N=0$. First, observe that since $\mathsf{B}$ is finite, the sequence is finite as well. We prove by contradiction that the sequence stops at $0$. Indeed, if $b_N=v(\Box\phi_N)>0$, then $u_{b_N}=u(\phi_N)>0$ by \nameref{item:no4}. Since $v(\Box\mathbf{0})\leq v(\Box\phi_N)$, we have $u(\mathbf{0})<u(\phi_N)$, whence $v(\Box\mathbf{0})<v(\Box\phi_N)$ (i.e., there is $b_{N+1}<b_N$, a contradiction).

We also pick formulas $\phi_i\in\Sfconst(\tau)$ s.t.\ $v(\Box\phi_i)=b_i$ and $u(\phi_i)=u_{b_i}$ and an $\varepsilon>0$ s.t.\ $\alpha+\varepsilon<1$. From here, we define
\begin{align}
p_0&=(\alpha+\varepsilon)\wedge_{\mathsf{G}}\min\{v(\lozenge\theta):\theta\in\Sfconst(\tau)\text{ and }v(\lozenge\theta)>\alpha\}\nonumber\\
p_{i+1}&=b_i\wedge_\mathsf{G}\min\{v(\lozenge\theta):\theta\in\Sfconst(\tau)\text{ and }v(\lozenge\theta)>b_{i+1}\}\label{equ:truthlemmaBoxpi}
\end{align}
We take a strictly increasing function $g:[0,1]\rightarrow[0,1]$ s.t.
\begin{align}
g(1)&=1\nonumber\\
g[[u_\alpha,1)]&=[\alpha,p_0)\nonumber\\
g[[u_{b_{i+1}},u_{b_i})]&=[b_{i+1},p_{i+1})
\label{equ:truthlemmaBoxg}
\end{align}
It is clear that $w=g\circ u$ is a state in the canonical model and that $w(\phi)<p_0\leq\alpha+\varepsilon$. Thus, it remains to show that $v\mathsf{R}^\tau w>w(\phi)$ for every $\theta\in\Sfconst$.

\fbox{Case 1.} If $u(\theta)=1$, then $w(\theta)=1$, whence, $v(\Box\theta)\leq w(\theta)$. Now, assume for contradiction that $v(\lozenge\theta)\leq\alpha=v(\Box\phi)$. Then we have from \nameref{item:no2} that either (i) $u(\theta)\leq u(\phi)<1$ or (ii) $u(\theta)>u(\phi)>u(\phi)$. In the first case, we have a contradiction since $u(\theta)=1$ because $w(\theta)=1$. In the second case, the contradiction is immediate. Thus, $v(\lozenge\theta)\geq p_0$.

\fbox{Case 2.} Now, we have two options. Either (i) $u(\theta)\in[u_{b_i},u_{b_{i-1}})$ or (ii) $u(\theta)\in[u_\alpha,1)$. In both cases, we have that $w(\theta)\in[b_i,p_i)$. In (i), we obtain $b_i=\max\{v(\Box\psi):u(\psi)<u_{b_{i-1}}\}$, whence $v(\Box\theta)\leq b_i\leq w(\theta)$. In (ii), we have that $w(\theta)\geq b_0=\alpha$. If $u(\theta)=u_\alpha$, then $v(\Box\theta)\leq\alpha$ and $w(\theta)=\alpha$ from \nameref{item:no1}. And if $u(\theta)>u_\alpha$, then $w(\theta)>w(\alpha)$ (i.e., $v(\Box\theta)\rightarrow_{\mathsf{G}}w(\theta)>w(\alpha)$).

Moreover, if $u(\theta)=u_{b_i}=u(\phi_i)$, then since $u_{b_i}<1$ and $u(\theta)\leq u(\phi)$, we have $w(\theta)=b_i=v(\Box\theta)\leq v(\lozenge\theta)$ by \nameref{item:no3'}. Finally, if $u(\theta)>u_{b_i}$, we have that $u(\theta)>u(\phi_i)\leq u(\phi)$, whence $v(\lozenge\theta)>v(\Box\phi_i)=b_i$ by \nameref{item:no2c}. Thus, $w(\theta)<p_i\leq v(\lozenge\theta)$.

It is clear that in both cases,
\begin{align*}
\inf\{v(\Box\theta)\rightarrow_\mathsf{G}w(\theta):\theta\in\Sfconst(\tau)\}>w(\phi)&\text{ and }\inf\{w(\theta)\rightarrow_\mathsf{G}v(\lozenge\theta):\theta\in\Sfconst(\tau)\}\geq p_0>\alpha
\end{align*}
The result follows.
\end{proof}
\begin{remark}\label{rem:whatisnewBox}
Let us return once again to the proof of Lemma~\ref{lemma:truthlemmaBox} and summarise its differences from~\cite[Claim~1]{CaicedoRodriguez2015}. First, our conditions \nameref{item:no1}--\nameref{item:no4} are weaker than the original ones. Second, since \nameref{item:no1} states that $u(\theta)>u(\phi)$ for $v(\Box\theta)>\alpha$ (and not $u(\theta)=1$), we can obtain only $\inf\{v(\Box\theta)\rightarrow_\mathsf{G}w(\theta):\theta\in\Sfconst(\tau)\}>w(\phi)$ and not $\inf\{v(\Box\theta)\rightarrow_\mathsf{G}w(\theta):\theta\in\Sfconst(\tau)\}=1$ as originally.
\end{remark}
\begin{lemma}\label{lemma:truthlemmaDiamond}
Let $\alpha$, $\varepsilon_1$, and $\varepsilon_2$ be positive and let further $v(\lozenge\phi)=\alpha$. Then, there is $w$ s.t.\ $w(\alpha)\geq\alpha-\varepsilon_1$ and $v\mathsf{R}^\tau w\geq\alpha-\varepsilon_2$.
\end{lemma}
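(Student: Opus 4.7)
The plan is to dualise the construction of Lemma~\ref{lemma:truthlemmaBox}, following \cite[Claim~2]{CaicedoRodriguez2015} with the same kind of weakenings we applied in the $\Box$ case. The goal is to produce a state $w \in W^\tau$ with $w(\phi) \geq \alpha - \varepsilon_1$ and $v\mathsf{R}^\tau w \geq \alpha - \varepsilon_2$, again via an auxiliary $u \in W^\tau$ composed with a strictly increasing order-preserving map $g$ that shifts $u$'s values onto the correct intervals determined by $\alpha$, $\varepsilon_1$, $\varepsilon_2$.

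I would first define a theory $\Gamma_{\phi,v} \subseteq \bimodalLtriangle$ dual to~\eqref{equ:truthlemmaBoxrefutingtheory}: it collects the implications $\theta \to \phi$ for $\theta \in \Sfconst(\tau)$ with $v(\lozenge\theta) < \alpha$, the implications $\theta_1 \to \theta_2$ whenever $v(\lozenge\theta_1) \leq v(\Box\theta_2)$, and the strict version $(\theta_2 \to \theta_1) \to \theta_1$ whenever $v(\lozenge\theta_1) < v(\Box\theta_2)$, the latter two coming from the Fisher--Servi axioms. Using the $\lozenge$-monotonicity rule of $\HfuzzyKbiG$, the axiom ${\sim}\lozenge\mathbf{0}$ and the deduction theorem~\eqref{equ:HfuzzyKbiGdeductiontheorem}, one verifies (dually to the $\Box$ case) the non-derivability $\Gamma_{\phi,v} \nvdash_{\HfuzzyKbiG} {\sim}\phi$: otherwise, lifting the derivation through $\lozenge$ and using the propositional reduction~\eqref{equ:propositionalreduction} together with the completeness of $\HGtriangle$ would force $v(\lozenge\phi) = 0$, contradicting $v(\lozenge\phi) = \alpha > 0$. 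Hence there is $u \in W^\tau$ with $u[\Gamma_{\phi,v}] = 1$ and $u(\phi) > 0$, from which one extracts weak order conditions dual to \nameref{item:no1}--\nameref{item:no4}, carrying the analogous disjunctive weakening by clauses of the form ``or $u(\theta) \geq u(\phi)$''.

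Next, using the finite set $\{v(\lozenge\theta) : \theta \in \Sfconst(\tau)\}$, I would extract a strictly increasing sequence $0 = a_N < \dots < a_1 < a_0 = \alpha$ by a mirror of~\eqref{equ:truthlemmaBoxdecreasingsequence}, together with thresholds $q_i$ analogous to~\eqref{equ:truthlemmaBoxpi} but computed from $v(\Box{-})$-values sitting just below each $a_i$ and tuned so that $q_0 \geq \alpha - \varepsilon_1$ while every relevant $q_i \geq \alpha - \varepsilon_2$. A strictly increasing map $g:[0,1]\to[0,1]$ then sends the $u$-value intervals demarcated by the preimages of the $a_i$ onto target intervals of the form $[a_i, q_{i-1})$ as in~\eqref{equ:truthlemmaBoxg}, and $w := g \circ u$ belongs to $W^\tau$ with $w(\phi) \geq \alpha - \varepsilon_1$ by construction of $g$ on the interval containing $u(\phi)$.

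Finally, I would check case by case, according to the interval containing $u(\theta)$, that both $v(\Box\theta) \to_\mathsf{G} w(\theta) \geq \alpha - \varepsilon_2$ and $w(\theta) \to_\mathsf{G} v(\lozenge\theta) \geq \alpha - \varepsilon_2$ hold for every $\theta \in \Sfconst(\tau)$, so that $v\mathsf{R}^\tau w \geq \alpha - \varepsilon_2$. The main obstacle, exactly as for the $\Box$ case flagged in Remark~\ref{rem:whatisnewBox}, is that the weakened order conditions on $u$ only deliver these inequalities up to $\alpha - \varepsilon_2$ rather than up to $1$; consequently, the thresholds $q_i$ (and hence the targets of $g$) must be designed so as to simultaneously clear the $\alpha - \varepsilon_2$ bar on the accessibility side and the $\alpha - \varepsilon_1$ bar on $w(\phi)$, which is where the careful interplay between $\varepsilon_1$ and $\varepsilon_2$ in the statement of the lemma becomes essential.
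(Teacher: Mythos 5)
Your proposal has the right overall architecture (a coding theory, a refuting valuation $u$, an order-preserving map $g$), but the central technical step is set up in the wrong direction, and this is not cosmetic. You propose to show $\Gamma_{\phi,v}\nvdash_{\HfuzzyKbiG}{\sim}\phi$ and to extract $u$ with $u[\Gamma_{\phi,v}]=1$ and $u(\phi)>0$. Neither half works: refuting $\Gamma_{\phi,v}\vdash{\sim}\phi$ only yields $\inf u[\Gamma_{\phi,v}]>u({\sim}\phi)$, which (since it forces $u(\phi)>0$ and hence $u({\sim}\phi)=0$) gives merely $u[\Gamma_{\phi,v}]>0$, far from $u[\Gamma_{\phi,v}]=1$; and $u(\phi)>0$ carries no information about where $u(\phi)$ sits relative to the values $u(\theta)$. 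What the construction actually needs is the \emph{strict} separation $u(\theta)<u(\phi)$ for every $\theta$ with $v(\lozenge\theta)<\alpha$ (condition \nameref{item:nono1}); without it, $g$ cannot push $w(\phi)$ up to $\alpha-\varepsilon_1$ while keeping $w(\theta)\leq v(\lozenge\theta)$. The paper obtains this by proving the non-derivability the other way around: $\phi\nvdash_{\HfuzzyKbiG}\bigvee_{\xi\in\Xi_{\phi,v}}\xi$, where every $\xi\in\Xi_{\phi,v}$ satisfies $v(\lozenge\xi)<\alpha$ (the contradiction runs through $\vdash\phi\to\bigvee\xi$, $\mathbf{nec}$ for $\lozenge$, and $\mathbf{K}$, forcing $v(\lozenge\phi)<\alpha$). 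The refuting valuation then satisfies $u(\phi)>u[\Xi_{\phi,v}]$, which is exactly the strict inequality required; and, contrary to what you anticipate, \emph{no} weakening of the original conditions of \cite[Claim~2]{CaicedoRodriguez2015} is needed here --- the weakening in the $\Box$ case arose precisely because there the coding theory sat on the left of $\vdash$. Relatedly, your $\Gamma_2$ and $\Gamma_3$ are copied verbatim from the $\Box$ case rather than dualised; compare the actual sets in \eqref{equ:truthlemmaDiamondrefutingtheory}.

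You also miss the difficulty that is genuinely new in the $\lozenge$ case (Remark~\ref{rem:whatisnewDiamond}): even when $v(\lozenge\phi)=1$ it may be impossible to have $u(\phi)=1$ (e.g.\ for $\phi=p\wedge{\sim}\triangle p$), so $u_\alpha$ need not equal $u(\phi)$ and neither can simply be sent to $\alpha$. One must split into the cases $u(\phi)<1$ and $u(\phi)=1$ and, in the former, introduce a further $\varepsilon'$ with $q_{N-1}<\alpha-\varepsilon'<\alpha$ together with a more fine-grained $g$ as in \eqref{equ:truthlemmaDiamondg}. Your assertion that $w(\phi)\geq\alpha-\varepsilon_1$ holds ``by construction of $g$ on the interval containing $u(\phi)$'' glosses over precisely this point.
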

\begin{proof}
Again, the proof follows that of~\cite[Claim~2]{CaicedoRodriguez2015}. We code the minimal requirements for $w$ using a~finite set of formulas $\Xi_{\phi,v}$, to obtain $u\in W$ satisfying those requirements, and then transform $u$ into the required $w$ constructing a suitable map of $[0,1]$ into itself.

We define
\begin{align}
\Xi_1&=\{\theta:v(\theta)<\alpha\text{ and }\theta\in\Sfconst(\tau)\}\nonumber\\
\Xi_2&=\{\theta_2\rightarrow\theta_1:\alpha<v(\lozenge\theta_1)<v(\Box\theta_2)\text{ and }\theta_1,\theta_2\in\Sfconst(\tau)\}\nonumber\\
\Xi_3&=\{(\theta_1\rightarrow\theta_2)\rightarrow\theta_1:v(\lozenge\theta_1)=v(\Box\theta_2)<\alpha\text{ and }\theta_1,\theta_2\in\Sfconst(\tau)\}\nonumber\\
\Xi_{\phi,v}&=\Xi_1\cup\Xi_2\cup\Xi_3
\label{equ:truthlemmaDiamondrefutingtheory}
\end{align}
It is clear that $v(\lozenge\xi)<\alpha$ for every $\xi\in\Xi_{\phi,v}$ and that $\Xi_{\phi,v}\neq\varnothing$ is finite. Thus, we have that $\phi\nvdash_{\HfuzzyKbiG}\bigvee\limits_{\xi\in\Xi}\xi$ via an application of~\eqref{equ:propositionalreduction}, \eqref{equ:HfuzzyKbiGdeductiontheorem}, $\mathbf{K}$, and $\mathbf{nec}$ (this time for $\lozenge$). Again, the argument here is the same as in~\cite[Claim~2]{CaicedoRodriguez2015}, so we omit it for the sake of brevity. Thus, there is a~valuation $u$~s.t.\ $1\!\geq\!u(\phi)\!>\!u[\Xi_{\phi,v}]$ that evaluates all theorems at $1$ (observe that we cannot evaluate $\phi$ at $1$ by default; theorems, on the other hand, are closed under $\triangle$ necessitation and thus are evaluated at~$1$). In addition, the following statements hold w.r.t.\ $u$ (observe that in contrast to Lemma~\ref{lemma:truthlemmaBox}, we could preserve the original conditions on $u$ from~\cite[Claim~2]{CaicedoRodriguez2015}).
\begin{description}
\item[$\#\#1$\label{item:nono1}] If $v(\lozenge\theta)<\alpha$, then $u(\theta)<u(\phi)\leq1$.
\item[$\#\#2$\label{item:nono2}] If $\alpha>v(\lozenge\theta_1)<v(\Box\theta_2)$, then $u(\theta_1)<u(\theta_2)$.
\item[$\#\#3$\label{item:nono3}] If $\alpha>v(\lozenge\theta_1)\leq v(\Box\theta_2)$, then $u(\theta_1)\leq u(\theta_2)$.
\item[$\#\#4$\label{item:nono4}] If $u(\theta)=0$, then $v(\Box\theta)=0$.
\item[$\#\#5$\label{item:nono5}] If $v(\lozenge\theta)=0$, then $u(\theta)=0$. 
\end{description}

We now proceed in a manner dual to that of Lemma~\ref{lemma:truthlemmaDiamond}. We define $\mathsf{C}=\{c:v(\lozenge\theta)\leq\alpha\text{ and }\theta\in\Sfconst(\tau)\}$ and set $u_c=\max\{u(\theta):v(\lozenge\theta)=c\}$ for every $c\in\mathsf{C}$. It is clear that $u_0=0$ (by~\nameref{item:nono5}) and $u_\alpha=\max\{u_c:c\in\mathsf{C}\}$. Now, define the following strictly increasing sequence
\begin{align}
c_0&=v(\lozenge\mathbf{0})=0\nonumber\\
c_{i+1}&=\min\{c\in\mathsf{C}:c>c_i\text{ and }u_c>u_{c_i}\}
\end{align}
We choose $\phi_i$'s s.t.\ $u_{c_i}=u(\phi_i)$ and $c_i=v(\lozenge\phi_i)$. It is clear that the sequence is finite since $\mathsf{C}$ is finite and that it stops at some $c_N=\alpha$. Indeed, if $c_i=v(\lozenge\phi_i)<\alpha$, then $u_{c_i}(\phi_i)<u(\phi)\leq u_\alpha\leq1$ from \nameref{item:nono1} which implies the existence of $c_{i+1}$.

Now fix $\varepsilon>0$ s.t.\ $\alpha-\varepsilon>c_{N-1}$ and define
\begin{align}
q_{N-1}&=(\alpha-\varepsilon)\vee_\mathsf{G}\max\{v(\Box\theta):v(\Box\theta)<c_N\}\nonumber\\
q_i&=c_i\vee_\mathsf{G}\max\{v(\Box\theta):v(\Box\theta)<c_{i+1}\}
\end{align}
This gives us two sequences:
\begin{align*}
0=c_0\leq q_0<c_1\leq q_1<\ldots<c_{N-1}\leq\alpha-\varepsilon\leq q_{N-1}<c_N=\alpha&\text{ and }
0=u_{c_0}<u_{c_1}<\ldots<u_{c_N}=u_\alpha
\end{align*}

We now have two cases: (1) $u(\phi)<1$ and (2) $u(\phi)=1$. In the first case, we fix another $\varepsilon'>0$ s.t.\ $\alpha-\varepsilon\leq q_{N-1}<\alpha-\varepsilon'<c_N=\alpha$ and choose a~strictly increasing function $g:[0,1]\rightarrow[0,1]$ s.t.
\begin{align}
g(0)&=0\nonumber\\
g[(u_{c_i},u_{c_{i+1}}]]&=(q_i,c_{i+1}]\quad(i<N-2)\nonumber\\
q[(u_{c_{N-2}},u_{c_{N-1}})]&=(q_{N-1},\alpha-\varepsilon')\nonumber\\
g(u(\phi))&=\alpha-\varepsilon'\nonumber\\
g[(u(\phi),u_\alpha]]&=\left(\alpha-\varepsilon',\frac{2\alpha-\varepsilon'}{2}\right]\nonumber\\
g[(u_\alpha,1)]&=\left(\frac{2\alpha-\varepsilon'}{2},1\right)\nonumber\\
g(1)&=1
\label{equ:truthlemmaDiamondg}
\end{align}
Note that we cannot always send $u_\alpha$ to $\alpha$ since it is possible that $v(\lozenge\phi)=1$ but $u(\phi)\neq1$ always (if, for example, $\phi=p\wedge{\sim}\triangle p$). Now, we can set $w=g\circ u$. It is clear that $w(\phi)\geq\alpha-\varepsilon'$. It remains only to show that $v\mathsf{R}^\tau w\geq\alpha-\varepsilon$. This is done exactly as in~\cite[Claim~2]{CaicedoRodriguez2015}.

\fbox{Case 1.1} If $v(\lozenge\theta)\geq\alpha$, then, trivially, $(w(\theta)\rightarrow_\mathsf{G}v(\lozenge\theta))\geq\alpha>\alpha-\varepsilon$.

\fbox{Case 1.2} If $v(\lozenge\theta)<\alpha$, then $u(\theta)<u(\phi)\leq1$ from \nameref{item:nono1} and we have two cases: (i) $u(\theta)\in(u_{c_i},u_{c_{i+1}}]$ or (ii) $u(\theta)=0$. In the first case, $w(\theta)\in(q_i,c_{i+1}]$, and, moreover, $c_{i+1}=v(\lozenge\phi_{i+1})=\min\{v(\lozenge\psi):u(\psi)>u_{c_i}\}$. Hence, $v(\lozenge\theta)\geq c_{i+1}\geq w(\theta)$. In the second case, using \nameref{item:nono4}, we have $w(\theta)=0$, whence $v(\Box\theta)=0$.

\fbox{Case 1.3} If $v(\Box\theta)<\alpha$, then $v(\Box\theta)>c_{N-1}=v(\lozenge\phi_{N-1})$, whence $u(\theta)>u(\phi_{N-1})$ (from \nameref{item:nono2}) and thus, $w(\theta)>q_{N-1}>\alpha-\varepsilon$ (from~\eqref{equ:truthlemmaDiamondg}). Hence, $(v(\Box\theta)\rightarrow_\mathsf{G}w(\theta))>\alpha-\varepsilon$.

\fbox{Case 1.4} If $v(\Box\theta)<\alpha$, then $c_i\leq v(\Box\theta)\leq q_i<c_{i+1}$ and we have two options: (i) $v(\Box\theta)=c_i$ and (ii) $v(\Box\theta)>c_i$. If (i), we have $v(\Box\theta)=v(\lozenge\phi_i)$, whence $u_{c_i}=u(\phi_i)\leq u(\theta)$ using \nameref{item:nono3}, and thus, $c_i\leq w(\theta)$ using~\eqref{equ:truthlemmaDiamondg}. Hence, $v(\Box\theta)\leq w(\theta)$. If (ii), then $u_{c_i}<u(\theta)$ using \nameref{item:nono2}, whence, $q_i\leq w(\theta)$ from~\eqref{equ:truthlemmaDiamondg} and thus, again $v(\Box\theta)\leq w(\theta)$.

Now we can see that cases 1.1--1.4 imply
\begin{align*}
\inf\{w(\theta)\rightarrow_\mathsf{G}v(\lozenge\theta):\theta\in\Sfconst(\tau)\}\geq\alpha&\text{ and }\inf\{v(\Box\theta)\rightarrow_\mathsf{G}w(\theta)\}>\alpha-\varepsilon
\end{align*}

In the second case, we define $g$ exactly as in~\cite[Claim~2]{CaicedoRodriguez2015}:
\begin{align}
g(0)&=0\nonumber\\
g[(u_{c_i},u_{c_{i+1}}]]&=(q_i,c_{i+1}]\quad(i<N-2)\nonumber\\
g[(u_{c_{N-1}},1)]&=(q_{N-1},\alpha)\nonumber\\
g(1)&=1
\label{equ:truthlemmaDiamondg1}
\end{align}
In this case, we have that $w(\phi)=1\geqslant\alpha-\varepsilon'$ for any $\varepsilon'>0$. We can also show that $v\mathsf{R}^\tau w\geq\alpha-\varepsilon$ in the same way as in the first case. The result follows.
\end{proof}
\begin{remark}\label{rem:whatisnewDiamond}
Again, let us survey the differences between our proof and~\cite[Claim~2]{CaicedoRodriguez2015}. First, it is not always the case that we can construct $w$ s.t.\ $w(\phi)=1$. This might fail even if $v(\lozenge\phi)=1$. This is why, $u_\alpha$ is only the greatest among $u_c$'s but not necessarily $1$ and is not even necessarily equal to $u(\phi)$. Furthermore, we cannot always send $u(\phi)$ (or $u_\alpha$) back to $\alpha$ itself and have to consider the case where we find an arbitrarily small $\varepsilon'$ s.t.\ $q_{N-1}<g(u(\phi))=\alpha-\varepsilon'\leq g(u_\alpha)<\alpha$ as shown in~\eqref{equ:truthlemmaDiamondg}. This makes $g$ more fine-grained than originally.
\end{remark}

We are now ready to prove the truth lemma.
\begin{lemma}\label{lemma:truthlemma}
For every $\phi\in\Sfconst(\tau)$ and $u\in W^\tau$, it holds that $u(\phi)=e^\tau(\phi,u)$.
\end{lemma}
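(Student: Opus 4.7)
The plan is to proceed by induction on the complexity of $\phi \in \Sfconst(\tau)$. The base case $\phi = p \in \Prop$ holds directly by the clause $e^\tau(p,u) = u(p)$. For propositional connectives $\phi = \psi_1 \circ \psi_2$ with $\circ \in \{\wedge,\vee,\rightarrow\}$ and unary $\phi = \sharp\psi$ with $\sharp \in \{\sim,\triangle\}$, the inductive step follows by combining the induction hypothesis on the immediate subformulas (which lie in $\Sfconst(\tau)$) with the fact that each $u \in W^\tau$ is by definition a $\Gtriangle$-homomorphism, while $e^\tau$ is defined compositionally on the propositional connectives according to Definition~\ref{def:KbiGsemantics}. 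So the real work is in the two modal cases, both of which reduce to Lemmas~\ref{lemma:truthlemmaBox} and~\ref{lemma:truthlemmaDiamond}.

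For $\phi = \Box\psi$, set $\alpha = u(\Box\psi)$. Using the induction hypothesis at accessible states, we must show
\[\alpha \;=\; \inf_{w\in W^\tau}\bigl(u\mathsf{R}^\tau w \rightarrow_{\mathsf{G}} w(\psi)\bigr).\]
The inequality $\leq$ is immediate from the definition of $\mathsf{R}^\tau$: since $\psi\in\Sfconst(\tau)$, the defining infimum contains the conjunct $u(\Box\psi)\rightarrow_{\mathsf{G}} w(\psi)$, so $u\mathsf{R}^\tau w \leq \alpha \rightarrow_{\mathsf{G}} w(\psi)$, and the residuation property of $\rightarrow_{\mathsf{G}}$ gives $\alpha \leq u\mathsf{R}^\tau w \rightarrow_{\mathsf{G}} w(\psi)$ for every $w$. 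For the $\geq$ inequality, the case $\alpha = 1$ is trivial; if $\alpha < 1$, then for any $\varepsilon > 0$ Lemma~\ref{lemma:truthlemmaBox} yields $w \in W^\tau$ with $w(\psi) < \alpha + \varepsilon$ and $u\mathsf{R}^\tau w > w(\psi)$, whence $u\mathsf{R}^\tau w \rightarrow_{\mathsf{G}} w(\psi) = w(\psi) < \alpha + \varepsilon$. Taking the infimum and letting $\varepsilon \to 0$ gives the desired bound.

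For $\phi = \lozenge\psi$, set $\alpha = u(\lozenge\psi)$; the target identity is
\[\alpha \;=\; \sup_{w\in W^\tau}\bigl(u\mathsf{R}^\tau w \wedge_{\mathsf{G}} w(\psi)\bigr).\]
The $\geq$ direction again comes from the definition of $\mathsf{R}^\tau$: for every $w$, the defining infimum contains $w(\psi) \rightarrow_{\mathsf{G}} u(\lozenge\psi)$, so $u\mathsf{R}^\tau w \leq w(\psi) \rightarrow_{\mathsf{G}} \alpha$, and residuation gives $u\mathsf{R}^\tau w \wedge_{\mathsf{G}} w(\psi) \leq \alpha$. The $\leq$ direction is trivial when $\alpha = 0$; otherwise, for any $\varepsilon > 0$ Lemma~\ref{lemma:truthlemmaDiamond} (applied with $\varepsilon_1 = \varepsilon_2 = \varepsilon$) produces $w \in W^\tau$ with $w(\psi) \geq \alpha - \varepsilon$ and $u\mathsf{R}^\tau w \geq \alpha - \varepsilon$, so $u\mathsf{R}^\tau w \wedge_{\mathsf{G}} w(\psi) \geq \alpha - \varepsilon$. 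Taking suprema and letting $\varepsilon \to 0$ concludes.

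The principal difficulty of the truth lemma has been absorbed into the two preceding lemmas, which produce the witness states needed at each modal step; once those are available, the present argument is essentially a bookkeeping exercise in the adjunction $a \wedge_{\mathsf{G}} b \leq c \iff a \leq b \rightarrow_{\mathsf{G}} c$. One subtle point worth verifying is that all formulas appearing in the $\inf/\sup$ arguments remain in $\Sfconst(\tau)$, which is exactly why $\Sfconst(\tau)$ was defined to contain every subformula of $\tau$ together with $\mathbf{0}$ and $\mathbf{1}$; this guarantees that the defining conjuncts of $\mathsf{R}^\tau$ at $\psi$ are available whenever we apply them.
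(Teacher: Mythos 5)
Your proposal is correct and follows essentially the same route as the paper: induction on $\phi$, with the propositional cases handled by the homomorphism property of states, one inequality in each modal case extracted from the definition of $\mathsf{R}^\tau$ via residuation, and the other inequality supplied by Lemmas~\ref{lemma:truthlemmaBox} and~\ref{lemma:truthlemmaDiamond} together with an $\varepsilon\to 0$ argument. You spell out the two bounds and the limiting step more explicitly than the paper's terse case split on $u(\Box\psi)=1$ versus $u(\Box\psi)<1$ (and dually for $\lozenge$), but the substance is identical.
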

\begin{proof}
We proceed by induction on $\phi$. The basis case of propositional variables holds by Definition~\ref{def:canonicalmodel}; the cases of propositional connectives can be obtained via a straightforward application of the induction hypotheses. Finally, let us consider the modal cases.

If $\phi=\Box\psi$, we have two options. First, $u(\Box\psi)=1$. By Definition~\ref{def:canonicalmodel}, $u\mathsf{R}^\tau u'\leq u(\Box\psi)\rightarrow_\mathsf{G}u'(\psi)$, whence $u(\Box\psi)\leq\inf\{u(\Box\psi)\rightarrow_\mathsf{G}u'(\psi):u'\in W^\tau\}$. It is now immediate that $u(\Box\psi)=\inf\{u(\Box\psi)\rightarrow_\mathsf{G}u'(\psi):u'\in W^\tau\}$. Otherwise, if $u(\Box\psi)<1$, we obtain the result by Lemma~\ref{lemma:truthlemmaBox}.

If $\phi=\lozenge\psi$, we proceed in the dual manner: if $u(\lozenge\psi)=0$, the result is immediate since $u\mathsf{R}^\tau u'\leq u'(\psi)\rightarrow_\mathsf{G}u(\lozenge\psi)$. If $u(\lozenge\psi)>0$, we use Lemma~\ref{lemma:truthlemmaDiamond}.

The result follows.
\end{proof}
\begin{theorem}\label{theorem:HfuzzyKbiGweakcompleteness}
Let $\Gamma\cup\{\phi\}\subsetneq\bimodalLtriangle$ be finite. Then $\Gamma\models_{\fuzzyKbiG}\phi$ iff $\Gamma\vdash_{\HfuzzyKbiG}\phi$.
\end{theorem}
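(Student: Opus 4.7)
The plan is to split the proof into the standard two directions. Soundness ($\Gamma\vdash_{\HfuzzyKbiG}\phi$ implies $\Gamma\models_{\fuzzyKbiG}\phi$) is routine: one checks each axiom schema is $\fuzzyKbiG$-valid on every fuzzy frame (the $\biG$-axioms and $\mathbf{0}$ are immediate; $\mathbf{K}$ follows from basic monotonicity of $\wedge_\mathsf{G},\rightarrow_\mathsf{G}$; $\mathbf{FS}$ is verified by an easy case split on whether $\inf$ is attained or approached) and each rule preserves the inequality $\inf_{\phi\in\Gamma}e(\phi,w)\leq e(\chi,w)$, noting that $\mathbf{nec}$ and $\triangle\mathbf{nec}$ are only applied to theorems, so their application does not require a deduction-theorem-style side condition. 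Since this is entirely standard, I would not dwell on it.

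For completeness I would reduce to a single formula via the deduction theorem~\eqref{equ:HfuzzyKbiGdeductiontheorem}. Assuming $\Gamma=\{\gamma_1,\ldots,\gamma_n\}\nvdash_{\HfuzzyKbiG}\phi$, iterating the deduction theorem and using $\HGtriangle$-provable equivalences between nested implications and conjunctive antecedents yields $\HfuzzyKbiG\nvdash\tau$, where $\tau\coloneqq\bigwedge\Gamma\rightarrow\phi$. It therefore suffices to build a~$\fuzzyKbiG$-model and a~state in it at which $\tau$ takes a~value strictly below $1$, for then $e(\bigwedge\Gamma,w)>e(\phi,w)$ witnesses $\Gamma\not\models_{\fuzzyKbiG}\phi$.

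To produce such a~state I would invoke the canonical countermodel $\mathfrak{M}^\tau=\langle W^\tau,\mathsf{R}^\tau,e^\tau\rangle$ of Definition~\ref{def:canonicalmodel}. By~\eqref{equ:propositionalreduction} the hypothesis $\HfuzzyKbiG\nvdash\tau$ rewrites as $\Th(\HfuzzyKbiG)\nvdash_{\HGtriangle}\tau$, so the propositional strong completeness of $\HGtriangle$ (Proposition~\ref{prop:HbiGcompleteness}) supplies a~$\Gtriangle$-homomorphism $v\colon\bimodalLtriangle\rightarrow[0,1]_\biG$ with $\inf\{v(\theta):\theta\in\Th(\HfuzzyKbiG)\}>v(\tau)$. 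Since $\Th(\HfuzzyKbiG)$ is closed under $\triangle\mathbf{nec}$, for every theorem $\theta$ the formula $\triangle\theta$ is also a~theorem, so $v(\triangle\theta)\in\{0,1\}$ must equal $1$, and then the axiom $\triangle\theta\rightarrow\theta$ forces $v(\theta)=1$. Hence $v[\Th(\HfuzzyKbiG)]=\{1\}$ and $v(\tau)<1$, so $v\in W^\tau$.

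The proof is then closed by the truth lemma (Lemma~\ref{lemma:truthlemma}), which gives $e^\tau(\tau,v)=v(\tau)<1$; since the subformulas $\bigwedge\Gamma$ and $\phi$ of $\tau$ lie in $\Sfconst(\tau)$, the same lemma yields $e^\tau(\bigwedge\Gamma,v)>e^\tau(\phi,v)$, contradicting $\Gamma\models_{\fuzzyKbiG}\phi$. The only delicate ingredient in this chain, namely the modal cases of the truth lemma, has already been done in Lemmas~\ref{lemma:truthlemmaBox} and~\ref{lemma:truthlemmaDiamond}, so the main potential obstacle---extracting a~canonical state from a~propositional $\HGtriangle$-countermodel even though $\HfuzzyKbiG$-entailment is order-preservation rather than $1$-preservation---is absorbed by the observation that $\Th(\HfuzzyKbiG)$ is closed under $\triangle\mathbf{nec}$.
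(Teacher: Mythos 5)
Your proof is correct and follows essentially the same route as the paper, which simply cites the truth lemma (Lemma~\ref{lemma:truthlemma}) and the deduction theorem~\eqref{equ:HfuzzyKbiGdeductiontheorem}. You have merely made explicit the details the paper leaves implicit --- reducing to $\tau=\bigwedge\Gamma\rightarrow\phi$, extracting a canonical state from a propositional $\HGtriangle$-countermodel via~\eqref{equ:propositionalreduction}, and using closure of $\Th(\HfuzzyKbiG)$ under $\triangle$nec to force $v[\Th(\HfuzzyKbiG)]=\{1\}$, exactly as the paper does inside Lemma~\ref{lemma:truthlemmaBox}.
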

\begin{proof}
The soundness can be obtained by a routine check of axioms' and rules' validity. The completeness follows from Lemma~\ref{lemma:truthlemma} and~\eqref{equ:HfuzzyKbiGdeductiontheorem}.
\end{proof}

We finish the section by establishing the strong completeness result. We adapt the proof from~\cite{CaicedoRodriguez2015} in the same manner that we did for the crisp $\KbiG$ in~\cite[Theorem~3.2]{BilkovaFrittellaKozhemiachenko2023IGPL}.
\begin{theorem}\label{theorem:KbiGstrongcompleteness}
$\HfuzzyKbiG$ is strongly complete: for any $\Gamma\cup\{\phi\}\subseteq\bimodalLtriangle$, it holds that $\Gamma\models_{\fuzzyKbiG}\phi$ iff $\Gamma\vdash_{\HfuzzyKbiG}\phi$.
\end{theorem}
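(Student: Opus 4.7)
The plan is to bootstrap from the weak completeness established in Theorem~\ref{theorem:HfuzzyKbiGweakcompleteness} to the strong version via a classical first-order compactness argument, following the Caicedo--Rodr\'{i}guez template and the analogous adaptation to $\crispKbiG$ carried out in the earlier paper. Soundness reduces to a routine verification of each axiom and rule, so the real content is in the completeness direction.

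Assume $\Gamma \not\vdash_{\HfuzzyKbiG} \phi$. Since Hilbert-style derivations use finitely many assumptions, $\Gamma_0 \not\vdash_{\HfuzzyKbiG} \phi$ for every finite $\Gamma_0 \subseteq \Gamma$, and Theorem~\ref{theorem:HfuzzyKbiGweakcompleteness} supplies, for each such $\Gamma_0$, a $\fuzzyKbiG$-model and a pointed world at which $\inf\{e(\gamma) : \gamma \in \Gamma_0\} > e(\phi)$. I now encode $\fuzzyKbiG$-models into classical first-order logic. Introduce a two-sorted signature with sorts for worlds and truth values; on the value sort, a linear order, constants for every rational of $[0,1]$, and function symbols for the bi-G\"{o}del operations; on the world sort, a binary accessibility function $R$ taking values in the value sort and, for every subformula $\psi$ of $\Gamma \cup \{\phi\}$, a unary function $\widehat{\psi}$ into values; together with a distinguished world constant $w^{*}$. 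Let $T$ be the first-order theory asserting (i) the value sort is a linearly ordered bi-G\"{o}del algebra in which the rational constants are ordered as in $[0,1]$, (ii) the recursive clauses of Definition~\ref{def:KbiGsemantics} hold for each $\widehat{\psi}$, with the modal cases expressed by universal inequalities $\widehat{\Box\psi}(w) \leq R(w,w') \rightarrow_{\mathsf{G}} \widehat{\psi}(w')$ (and dually for $\lozenge$), supplemented for each positive rational $\varepsilon$ by an existential ``$\varepsilon$-witness'' schema ensuring that $\widehat{\Box\psi}(w)$ is the actual infimum and $\widehat{\lozenge\psi}(w)$ the actual supremum, and (iii) a fresh value constant $c$ satisfies $\widehat{\phi}(w^{*}) < c \leq \widehat{\gamma}(w^{*})$ for each $\gamma \in \Gamma$.

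Every finite subset of $T$ is satisfied by interpreting it in the $\fuzzyKbiG$-model furnished above for the corresponding finite $\Gamma_0$, so classical compactness delivers a model $\mathcal{N}$ of $T$. To finish, extract a $\fuzzyKbiG$-countermodel from $\mathcal{N}$: because bi-G\"{o}del semantics only cares about the order of values, an order-preserving map of the value sort of $\mathcal{N}$ into $[0,1]$ (fixing the rational constants and using the density of $\mathbb{Q} \cap [0,1]$ to fill gaps) transports the interpretations of $R$ and the $\widehat{\psi}$'s into an honest fuzzy Kripke frame and valuation in which the separating condition $\inf\{e(\gamma, w^{*}) : \gamma \in \Gamma\} > e(\phi, w^{*})$ survives, giving $\Gamma \not\models_{\fuzzyKbiG} \phi$. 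I expect the delicate step to be clause (ii): the modal operators are defined by infima and suprema, which are not literally first-order, so the rational witnessing schema must be strong enough that, once the value sort is embedded into $[0,1]$, the transported $\widehat{\Box\psi}$ and $\widehat{\lozenge\psi}$ still realise inf and sup over $[0,1]$ rather than over a possibly non-Archimedean substructure of $\mathcal{N}$; the treatment of $\triangle$ (which is also not order-continuous) requires analogous care in the embedding step.
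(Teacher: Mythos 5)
Your proposal follows essentially the same route as the paper: reduce to finite non-derivability, encode the $\fuzzyKbiG$ semantics (worlds, values, accessibility, and a value-function for each formula, plus a separating value constant witnessing the order-based refutation of the entailment) into a classical first-order theory, apply compactness to the weak-completeness countermodels of the finite fragments, and then order-embed the value sort into $[0,1]$ preserving infima and suprema. The only detail worth adding is that the paper also invokes the downward L\"{o}wenheim--Skolem theorem to make the resulting model countable before embedding its linearly ordered value sort into $\mathbb{Q}\cap[0,1]$; without countability the final embedding step you describe need not exist.
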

\begin{proof}
The proof follows~\cite[Theorem~3.1]{CaicedoRodriguez2015}. The only two differences are that we need to account for $\triangle$ and that the $\fuzzyKbiG$ entailment $\Gamma\models_{\fuzzyKbiG}\chi$ is defined via the order on $[0,1]$. That is, if the entailment is refuted by $e$, then $\inf\{e(\phi,c):\phi\in\Gamma\}>e(\chi,c)$ for some $c\in\mathfrak{F}$. This, in turn, is equivalent to
\begin{align}
\exists d\!\in\!(0,1]~\forall\phi\in\Gamma:e(\phi,c)\geq d\text{ but }e(\chi,c)<d\label{equ:entailmentrefutation}
\end{align}

Now let $\Gamma\cup\{\phi\}\subseteq\bimodalLtriangle$ and $\Gamma\nvdash_{\HcrispKbiG}\phi$. We consider the classical first order theory $\Gamma^*$ whose signature contains two unary predicates $W$ and $P$, one binary predicate $<$, binary functions $\circ$ and $\mathsf{s}$, unary function $\blacktriangle$, constants $0$, $1$, $c$, $d$, and a function symbol $f_\theta$ for each $\theta\in\bimodalLtriangle$. Intuitively, $W(x)$ stands for ‘$x$ is a state’; $P(x)$ for ‘$x$ is a number’; $f_\theta(x)$ for ‘the value of $\theta$ in $x$’; $<$ is going to be the order on numbers. Constants $c$ and $d$ stand for the state where the entailment is refuted and the value that separates $\Gamma$ and $\chi$~--- cf.~\eqref{equ:entailmentrefutation}, respectively. $\circ$ is used to define the value of the G\"{o}del implication; $\blacktriangle$ is the counterpart of $\triangle$; $\mathsf{s}$ is the relation between states.

We can now formalise the semantics of $\fuzzyKbiG$ in the classical first-order logic as follows.\footnote{Note that in previous proofs of strong completeness of $\KG$ and $\crispKbiG$ in~\cite{CaicedoRodriguez2015,RodriguezVidal2021,BilkovaFrittellaKozhemiachenko2023IGPL}, $\Gamma^*$ included $\forall x(W(x)\vee{\sim}W(x))$ (interpreted as ‘every $x$ is a state or not a~state’) as an axiom. This formula is classically valid and thus superfluous, whence, we do not add it here.}
\begin{itemize}
\item $\forall x{\sim}(W(x)\wedge P(x))$
\item $P(d)$
\item ‘$\langle P,<\rangle$ is a strict linear order s.t.\ $0<d\leq1$, $0$ and $1$ are the minimum and the maximum of $\langle P,<\rangle$’.
\item $\forall x\forall y((W(x)\wedge W(y))\rightarrow P(\mathsf{s}(x,y)))$
\item $\forall x\forall y((P(x)\wedge P(y))\rightarrow((x\leq y\wedge x\circ y=1)\vee(x>y\wedge x\circ y=y)))$
\item $\forall x(P(x)\rightarrow((x=1\wedge\blacktriangle(x)=1)\vee(x<1\wedge\blacktriangle(x)=0)))$
\item For each $\theta,\theta'\in\bimodalLtriangle$, we add the following formulas.
\begin{itemize}
\item $\forall x(W(x)\rightarrow P(f_\theta(x)))$
\item $\forall x(W(x)\rightarrow f_{{\sim}\theta}(x)=(f_\theta(x)\circ 0))$
\item $\forall x(W(x)\rightarrow f_{\triangle\theta}(x)=\blacktriangle
(f_\theta(x)))$
\item $\forall x(W(x)\rightarrow f_{\theta\wedge\theta'}(x)=\min\{
f_\theta(x),f_{\theta'}(x)\})$
\item $\forall x(W(x)\rightarrow f_{\theta\vee\theta'}(x)=\max\{
f_\theta(x),f_{\theta'}(x)\})$
\item $\forall x(W(x)\rightarrow f_{\theta\rightarrow\theta'}(x)=
f_\theta(x)\circ f_{\theta'}(x))$
\item $\forall x(W(x)\rightarrow f_{\Box\theta}(x)=\inf\limits_{y}\{\mathsf{s}(x,y)\circ f_\theta(y)\})$
\item $\forall x(W(x)\rightarrow f_{\lozenge\theta}(x)=\sup\limits_{y}\{\min\{\mathsf{s}(x,y),f_\theta(y)\}\})$
\end{itemize}
\item For each $\gamma\in\Gamma$, we add $f_\gamma(c)\geq d$.
\item We also add $W(c)\wedge(f_\phi(c)<d)$.
\end{itemize}

The rest of the proof is identical to that in~\cite{CaicedoRodriguez2015}. For each finite subset $\Gamma^-$ of $\Gamma^*$, we let $\bimodalLtriangle^-=\{\theta:f_\theta\text{ occurs in }\Gamma^-\}$. Since $\bimodalLtriangle^-\cap\Gamma\nvdash_{\HKbiG}\phi$ by assumption, Theorem~\ref{theorem:HfuzzyKbiGweakcompleteness} entails that there is a~crisp pointed model $\langle\mathfrak{M},c\rangle$ with $\mathfrak{M}=\langle W,\mathsf{s}^{\Gamma^-},e^{\Gamma^-}\rangle$ being such that $e^{\Gamma^-}(\phi,c)<d$ and $e^{\Gamma^-}(\theta,c)\geq d$ for every $\theta\in\Gamma\cap\Gamma^-$. Thus, the following structure
\[\langle W\uplus[0,1],W,[0,1],<,0,1,c,d,\circ,\blacktriangle,\mathsf{s}^{\Gamma^-},\{f_\theta\}_{\theta\in\bimodalLtriangle}\rangle\]
is a model of $\Gamma^-$. Now, by compactness and the downward L\"{o}wenheim--Skolem theorem, $\Gamma^*$ has a~countable model
\[\mathfrak{M}^*=\langle B,W,P,<,0,1,c,d,\circ,\blacktriangle,\mathsf{s}\{f_\theta\}_{\theta\in\bimodalLtriangle}\rangle\]
Now, we can embed $\langle P,<\rangle$ into $\langle\mathbb{Q}\cap[0,1],<\rangle$ preserving $0$ and $1$ as well as all infima and suprema. Hence, we may w.l.o.g.\ assume that $\mathsf{s}$ is crisp and the ranges of $f_\theta$'s are contained in $[0,1]$. Then, it is straightforward to verify that $\mathfrak{M}=\langle W,S,e\rangle$, where $e(\theta,w)=f_\theta(w)$ for all $w\in W$ and $\theta\in\bimodalLtriangle$, is a crisp $\KbiG$ model with a distinguished world $c$ such that $v[\Gamma,c]\geq d$ and $e(\phi,c)<d$ for some $0<d\leq1$. Hence, $\inf\{e(\gamma,c):\gamma\in\Gamma\}>e(\phi,c)$, and thus, $\Gamma\not\models_{\fuzzyKbiG}\phi$.
\end{proof}
\section[Paraconsistent relatives]{$\KGsquare$ and $\infoGsquare$ --- paraconsistent relatives of $\KbiG$\label{sec:paraconsistentlogics}}
As we have mentioned in the introduction, we will be mainly concerned with paraconsistent relatives of $\KbiG$ defined on \emph{bi-relational} frames of the form $\mathfrak{F}=\langle W,R^+,R^-\rangle$ where $R^+$ and $R^-$ are (possibly) fuzzy relations on $W$. Let us quickly recall the motivation for the bi-relational frames and informational modalities that was outlined in~\cite{BilkovaFrittellaKozhemiachenko2023nonstandard}.

We begin with the interpretations of modalities. If we interpret the states in a~Kripke frame as sources that refer to one another and the accessibility relations as the degree of trust, then all modalities correspond to different strategies of aggregating the information.

Namely, $\Box$ and $\lozenge$ stand for the ‘pessimistic’ and ‘optimistic’ aggregations: the positive support of $\Box\phi$ is calculated using the infima of the positive supports of $\phi$ and thus, $e_1(\Box\phi,w)<1$ as long as there exists $w'$ that does not completely positively support $\phi$ (i.e., $e_1(\phi,w')<1$) and is trustworthy enough for $w$ (i.e., $wR^+w'>e_1(\phi,w')$). The negative support of $\Box\phi$ is calculated using the suprema of the negative supports of $\phi$, i.e., it suffices to find a sufficiently trusted source $w''$ with $wR^-w''>0$ that gives $\phi$ some non-zero negative support ($e_2(\phi,w'')>0$) for $e_2(\Box\phi,w)>0$. Thus, $\Box\phi$ represents the search for trustworthy refutations of $\phi$ (and only if they are not found can $\Box\phi$ be evaluated at $(1,0)$). Dually, since $\lozenge\phi$ uses suprema of positive supports and infima of negative supports, it can be seen as the search of trusted confirmations of $\phi$ (and if these are not found $\lozenge\phi$ is evaluated at $(0,1)$).

The informational modalities stand for the ‘sceptical’ ($\blacksquare$) and ‘credulous’ ($\blacklozenge$) aggregations. Their support of truth is defined in the same manner as that of $\Box$ and $\lozenge$. The support of falsity, however, is the same as the support of truth: $e_2(\blacksquare\phi,w)$ is calculated using the \emph{infima} of $e_2(\phi,w')$ and $e_2(\blacklozenge\phi,w)$ uses the \emph{suprema}. Here $w$ either looks for \emph{trusted rejections}\footnote{We differentiate between a~\emph{rejection} which we treat as \emph{lack of support} and a~\emph{denial, disproof, refutation, counterexample}, etc.\ which we interpret as the \emph{negative support}.} (using $\blacksquare$) or (using $\blacklozenge$) for \emph{trusted confirmations of both positive and negative supports} of $\phi$.

Observe that the independence of the support of truth from the support of falsity is crucial to differentiate sceptical (credulous) and pessimistic (optimistic) aggregations. Indeed, if we did not treat them separately, sceptical and pessimistic (and, likewise, credulous and optimistic) aggregations would coincide. 

In what follows, we use $R^+$ to compute the support of the truth of the modal formulas and $R^-$ for the support of falsity. As modalities represent aggregation from different sources, it is reasonable to assume that one can trust a confirmation from a source more or less than a denial. For example, if we read a sensationalistic newspaper, we might be less inclined to believe its assertions than refutations; likewise, if we are listening to an extremely sceptical person, we might believe their confirmations more than their denials.
\subsection{Semantics\label{ssec:paraconsistentsemantics}}
The languages $\bimodalLtrianglesquare$ and $\infobimodalLtrianglesquare$ of $\KGsquare$ and $\infoGsquare$ are defined via the following grammars.
\begin{align*}
\bimodalLtrianglesquare\ni\phi&\coloneqq p\in\Prop\mid\neg\phi\mid{\sim}\phi\mid\triangle\phi\mid(\phi\wedge\phi)\mid(\phi\vee\phi)\mid(\phi\rightarrow\phi)\mid\Box\phi\mid\lozenge\phi\\
\infobimodalLtrianglesquare\ni\phi&\coloneqq p\in\Prop\mid\neg\phi\mid{\sim}\phi\mid\triangle\phi\mid(\phi\wedge\phi)\mid(\phi\vee\phi)\mid(\phi\rightarrow\phi)\mid\blacksquare\phi\mid\blacklozenge\phi
\end{align*}

Since the languages differ only with their modal operators and are interpreted on the same classes of frames, we put their semantics in one definition below.
\begin{definition}[Semantics of $\KGsquare$ and $\infoGsquare$]\label{def:paraconsistentsemantics}
A bi-relational paraconsistent model is a tuple $\mathfrak{M}\!=\!\langle W,R^+,R^-,e_1,e_2\rangle$ with $\langle W,R^+,R^-\rangle$ being a bi-relational frame and $e_1,e_2:\Prop\times W\rightarrow[0,1]$.

The valuations are extended on complex propositional formulas as follows.
\begin{longtable}{rclrcl}
$e_1(\neg\phi,w)$&$=$&$e_2(\phi,w)$&$e_2(\neg\phi,w)$&$=$&$e_1(\phi,w)$\\
$e_1(\phi\wedge\phi',w)$&$=$&$e_1(\phi,w)\wedge_\mathsf{G}e_1(\phi',w)$&$e_2(\phi\wedge\phi',w)$&$=$&$e_2(\phi,w)\vee_\mathsf{G}e_2(\phi',w)$\\
$e_1(\phi\vee\phi',w)$&$=$&$e_1(\phi,w)\vee_\mathsf{G}e_1(\phi',w)$&$e_2(\phi\vee\phi',w)$&$=$&$e_2(\phi,w)\wedge_\mathsf{G}e_2(\phi',w)$\\
$e_1(\phi\rightarrow\phi',w)$&$=$&$e_1(\phi,w)\!\rightarrow_\mathsf{G}\!e_1(\phi',w)$&$e_2(\phi\rightarrow\phi',w)$&$=$&$e_2(\phi',w)\coimplies_\mathsf{G}e_2(\phi,w)$\\
$e_1({\sim}\phi,w)$&$=$&${\sim_\mathsf{G}}e_1(\phi,w)$&$e_2({\sim}\phi,w)$&$=$&$1\coimplies_\mathsf{G}e_2(\phi,w)$\\
$e_1(\triangle\phi,w)$&$=$&$\triangle_\mathsf{G}e_1(\phi,w)$&$e_2(\triangle\phi,w)$&$=$&${\sim_\mathsf{G}\sim_\mathsf{G}}e_2(\phi,w)$
\end{longtable}

The modal formulas are interpreted as follows.
\begin{center}
\begin{tabular}{rclrcl}
$e_1(\Box\phi,w)$&$=$&$\inf\limits_{w'\in W}\!\{wR^+w'\rightarrow_\mathsf{G}e_1(\phi,w')\}$
&
$e_2(\Box\phi,w)$&$=$&$\sup\limits_{w'\in W}\!\{wR^-w'\wedge_\mathsf{G}e_2(\phi,w')\}$\\
$e_1(\lozenge\phi,w)$&$=$&$\sup\limits_{w'\in W}\!\{wR^+w'\wedge_\mathsf{G}e_1(\phi,w')\}$
&
$e_2(\lozenge\phi,w)$&$=$&$\inf\limits_{w'\in W}\!\{wR^-w'\rightarrow_\mathsf{G}e_2(\phi,w')\}$\\
$e_1(\blacksquare\phi,w)$&$=$&$\inf\limits_{w'\in W}\!\{wR^+w'\!\!\rightarrow_\mathsf{G}e_1(\phi,w')\}$
&
$e_2(\blacksquare\phi,w)$&$=$&$\inf\limits_{w'\in W}\!\{wR^-w'\!\!\rightarrow_\mathsf{G}e_2(\phi,w')\}$\\
$e_1(\blacklozenge\phi,w)$&$=$&$\sup\limits_{w'\in W}\!\{wR^+w'\!\wedge_\mathsf{G}\!e_1(\phi,w')\}$
&
$e_2(\blacklozenge\phi,w)$&$=$&$\sup\limits_{w'\in W}\!\{wR^-w'\!\wedge_\mathsf{G}\!e_2(\phi,w')\}$
\end{tabular}
\end{center}

We say that $\phi$ is \emph{$e_1$-valid ($e_2$-valid) on a~pointed frame $\langle\mathfrak{F},w\rangle$} ($\mathfrak{F},w\models^+\phi$ and $\mathfrak{F},w\models^-\phi$, respectively) iff $e_1(\phi,w)=1$ ($e_2(\phi,w)=0$) for every model $\mathfrak{M}$ on $\mathfrak{F}$. We call $\phi$ \emph{strongly valid on $\langle\mathfrak{F},w\rangle$} ($\mathfrak{F},w\models\phi$) iff it is $e_1$ and $e_2$-valid. We call $\phi$ $e_1$-valid (respectively, $e_2$-valid, strongly valid) on $\mathfrak{F}$, iff $\mathfrak{F},w\models^+\phi$ ($\mathfrak{F},w\models^-\phi$, $\mathfrak{F},w\models\phi$, respectively) for every $w\in\mathfrak{F}$.

$\Gamma$ \emph{entails} $\chi$ (on $\mathfrak{F}$) iff for every model $\mathfrak{M}$ (on $\mathfrak{F}$) and every $w\in\mathfrak{M}$, it holds that
\begin{align*}
\inf\{e_1(\phi,w):\phi\in\Gamma\}\leq e_1(\chi,w)\text{ and }\sup\{e_2(\phi,w):\phi\in\Gamma\}\geq e_2(\chi,w)
\end{align*}
\end{definition}

One can see from Definition~\ref{def:paraconsistentsemantics} that \emph{the propositional} fragment of both logics is, in fact, $\Gsquare$, a~pa\-ra\-con\-sis\-tent expansion of G\"{o}del logic with $\neg$\footnote{Observe that $\phi\coimplies\chi$ can be defined via $\rightarrow$ in the presence of $\neg$ as follows: $\neg(\neg\chi\rightarrow\neg\phi)$; $\triangle\phi$ can be defined as $\neg{\sim\sim}\neg\phi$.} introduced in~\cite{Ferguson2014} as $\mathsf{I_4C_4G}$ and then in~\cite{BilkovaFrittellaKozhemiachenko2021} under its current designation (cf.~Fig.~\ref{fig:logics}). In addition, support of falsity conditions of $\KGsquare$ coincide with the semantics of $\KbiG$. Let us now recall and expand an example from~\cite{BilkovaFrittellaKozhemiachenko2023nonstandard} that illustrates the semantics of modalities.
\begin{example}\label{example:restaurant}
A tourist ($t$) wants to go to a restaurant and asks their two friends ($f_1$ and $f_2$) to describe their impressions regarding the politeness of the staff ($s$) and the quality of the desserts ($d$). Of course, the friends' opinions are not always internally consistent, nor is it always the case that one or the other even noticed whether the staff was polite or was eating desserts. Furthermore, $t$ trusts their friends to different degrees when it comes to their positive and negative opinions.

The first friend says that half of the staff was really nice but the other half is unwelcoming and rude and that the desserts (except for the tiramisu and souffl\'{e}) are tasty. The second friend, unfortunately, did not have the desserts at all. Furthermore, even though, they praised the staff, they also said that the manager was quite obnoxious.

The situation is depicted in Fig.~\ref{fig:restaurant}. Let us now look at how different aggregations work with this information. If the tourist is sceptical w.r.t.\ $s$~and~$d$, they look for \emph{trusted rejections} of both positive and negative supports of $s$~and~$d$. Thus $t$ uses the values of $R^+$ and $R^-$ as thresholds above which the information provided by the source does not count as a~trusted enough rejection. I.e., to accept rejection from a friend, it should be stronger than the degree of trust the tourist gives to the friend. We have that $tR^+f_1>e_1(s,f_1)$ but $tR^+f_2\leq e(s,f_2)$ (Fig.~\ref{fig:restaurant}). Thus, only the account of the first friend counts as a~rejection. In our case, we have $e(\blacksquare s,t)=(0.5,0.5)$ and $e(\blacksquare d,t)=(0,0)$.

On the other hand, if $t$ is credulous, they look for \emph{trusted confirmations} of both positive and negative supports and use $R^+$ and $R^-$ as thresholds up to which they accept the information provided by the source. In particular, we have $e(\blacklozenge s,t)=(0.7,0.2)$ and $e(\blacklozenge d,t)=(0.7,0.3)$.

Similarly, if $t$ is \emph{pessimistic} about the staff, they will use the account of $f_1$ for the trusted \emph{rejection} of the \emph{positive support} of $s$ and for the trusted confirmation of its \emph{negative support} (since $t$ trusts the rejections provided by $f_1$ more than those provided by $f_2$). Thus, $e(\Box s,t)=(0.5,0.5)$. If $t$ is \emph{optimistic} about the desserts, then they will use the account of $f_1$ for the trusted confirmation of the positive support of $d$. However, $f_2$ \emph{completely rejects} the negative support of $d$, and $t$ has positive trust in $f_2$'s rejections ($tR^-f_2>0$). Hence, the result of the optimistic aggregation is as follows: $e(\lozenge d,t)=(0.7,0)$.
\end{example}

\begin{figure}
\[\xymatrix{f_1:\txt{$s=(0.5,0.5)$\\$d=(0.7,0.3)$}~&&~t~\ar[rr]^(.3){(0.7,0.2)}\ar[ll]_(.3){(0.8,0.9)}&&~f_2:\txt{$s=(1,0.4)$\\$d=(0,0)$}}\]
\caption{$(x,y)$ stands for $wR^+w'=x,wR^-w'=y$. $R^+$ (resp., $R^-$) is interpreted as the tourist's threshold of trust in positive (negative) statements by the friends.}
\label{fig:restaurant}
\end{figure}

The main goal of this section is to study $\fuzzybirelKGsquare$ and $\birelinfoGsquare$. We first show that $\Box$ and $\lozenge$ are not interdefinable in $\birelKGsquare$ (in fact, even in $\crispbirelKGsquare$) in contrast to the mono-relational logics.
\begin{proposition}\label{prop:birelKGsquarenondefinability}
$\Box$ and $\lozenge$ are not interdefinable in $\crispbirelKGsquare$ and hence, in $\fuzzybirelKGsquare$.
\end{proposition}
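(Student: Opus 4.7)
My plan is to exhibit two pointed crisp bi-relational models $\langle\mathfrak{M}_1,w\rangle$ and $\langle\mathfrak{M}_2,w\rangle$ such that every $\bimodalLtrianglesquare$-formula $\phi$ that does not contain $\Box$ satisfies $e_i(\phi,w)_{\mathfrak{M}_1}=e_i(\phi,w)_{\mathfrak{M}_2}$ for $i\in\{1,2\}$, while $\Box p$ takes different values at $w$ in the two models. This will rule out any $\Box$-free $\bimodalLtrianglesquare$-formula being semantically equivalent to $\Box p$ in $\crispbirelKGsquare$, and a dual construction exchanging the roles of $R^+$ and $R^-$ (and hence of $\Box$ and $\lozenge$) will yield the non-definability of $\lozenge$ by $\Box$-only formulas. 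The result for $\fuzzybirelKGsquare$ follows because crisp bi-relational frames are a special case of fuzzy bi-relational ones.

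The observation guiding the construction is that the natural dual candidate $\neg\lozenge\neg p$, which \emph{does} coincide with $\Box p$ in mono-relational $\crispKGsquare$ (since the swapping negation $\neg$ exchanges $e_1$ and $e_2$ and the two modalities share a single accessibility relation), computes its $e_1$-component using $R^-$, whereas $\Box p$ uses $R^+$. Thus in any bi-relational frame with $R^+\ne R^-$ we already have $\neg\lozenge\neg p\ne\Box p$. The delicate task is to rule out every other $\Box$-free candidate, including ones built from iterated $\lozenge$'s interleaved with $\neg$, ${\sim}$, $\triangle$ and the G\"{o}del connectives.

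The principal obstacle I expect is the design of $\mathfrak{M}_1$ and $\mathfrak{M}_2$ verifying the agreement condition on all $\Box$-free formulas. Simply adding an $R^+$-successor whose atom valuation is dominated (in the truth order or the information order) by an existing successor does not suffice, because the connectives $\neg$, ${\sim}$, $\triangle$ all have idiosyncratic monotonicity with respect to $(e_1,e_2)$: formulas such as $(\neg p\rightarrow p)\rightarrow p$ or $\lozenge\mathbf{1}$ can invert the relevant ordering and thereby make $\lozenge$-suprema genuinely differ between the two models. The construction therefore has to tailor the successor valuations so that the $\sup$-based aggregation performed by every $\lozenge$-subformula is invariant between $\mathfrak{M}_1$ and $\mathfrak{M}_2$, while the $\inf$-based aggregation that computes $\Box p$'s $e_1$-component is not. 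Once such $\mathfrak{M}_1,\mathfrak{M}_2$ are in hand, the agreement on $\Box$-free formulas follows by a routine structural induction: the propositional cases propagate the inductive hypothesis pointwise at $w$, and the $\lozenge$-case is delivered by the invariance built into the construction.
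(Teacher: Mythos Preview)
Your approach is genuinely different from the paper's. The paper works in a \emph{single} crisp bi-relational model: it chooses valuations so that the set $X$ of values attainable at the root by $\lozenge$-free formulas (and, dually, the set $Y$ for $\Box$-free formulas) is small enough to be listed explicitly, and then checks that $e(\lozenge p,w_0)\notin X^c$ and $e(\Box p,w_0)\notin Y^c$, where $X^c,Y^c$ are the closures under the propositional connectives. Your plan is instead a two-model indistinguishability argument of bisimulation type. Both strategies are standard and both lead to a proof; the paper's is more computational, yours more model-theoretic.

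That said, your proposal has a genuine gap: you state the property $\mathfrak{M}_1,\mathfrak{M}_2$ must have and correctly flag the obstacles (non-monotone $\neg,{\sim},\triangle$), but you never actually construct the models, and that construction is the whole content of the argument. One concrete choice that works: let $w$ have a single $R^+$-successor $u$ (a dead end) in $\mathfrak{M}_1$ and two dead-end $R^+$-successors $u,v$ in $\mathfrak{M}_2$, with $R^-$ identical in both models. Set $e(p,u)=(c,d)$ and $e(p,v)=(a,b)$ so that the two pairs have the \emph{same order type} relative to $0,1$ (e.g.\ $0<d<c<1$ and $0<b<a<1$) and moreover $a<c$, $b\le d$. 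At a dead end every $\Box$-free formula collapses to a $\biG$-term in the two ``variables'' $p,\neg p$; since $\biG$-term functions depend only on the order type of their inputs, one obtains $e_1(\psi,v)\le e_1(\psi,u)$ for every $\Box$-free $\psi$, so the $\sup$ defining $e_1(\lozenge\psi,w)$ is unchanged, while $e_1(\Box p,w)$ drops from $c$ to $a$. Finally, your claim that the other direction follows by ``exchanging the roles of $R^+$ and $R^-$'' is not correct as stated: swapping $R^+$ and $R^-$ converts $\Box,\lozenge$ into $\lineBox,\linelozenge$ (cf.\ Definition~\ref{def:linemodalities}), not into each other. The right dual for showing $\lozenge p$ is not $\Box$-definable adds an $R^+$-successor whose coordinates \emph{dominate} those of $u$ (same order type, $a>c$, $b\ge d$), leaving every $\inf$ fixed while strictly increasing $e_1(\lozenge p,w)$.
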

\begin{proof}
Denote with $\mathscr{L}_\Box$ and $\mathscr{L}_\lozenge$ the $\lozenge$-free and $\Box$-free fragments of $\bimodalLtrianglesquare$, respectively. To prove the statement, it suffices to find a pointed model $\langle\mathfrak{M},w\rangle$ s.t.\ there is no $\mathscr{L}_\lozenge$ formula that has the same value at $w$ as $\Box p$ and vice versa.

Consider the model in Fig.~\ref{fig:birelKGsquarenondefinability}. We have $e(\Box p,w_0)=\left(\frac{3}{5},\frac{3}{4}\right)$ and $e(\lozenge p,w_0)=\left(\frac{4}{5},\frac{2}{4}\right)$.
\begin{figure}
\centering
\[\xymatrix{w_1:p=\left(\frac{4}{5},\frac{1}{4}\right)&w_2:p=\left(\frac{2}{5},\frac{3}{4}\right)&w_3:p=\left(\frac{3}{5},\frac{2}{4}\right)\\&w_0:p=(1,0)\ar[u]|{-}\ar[ur]|{\pm}\ar[ul]|{+}&}\]
\caption{All variables have the same values in all states exemplified by $p$.}
\label{fig:birelKGsquarenondefinability}
\end{figure}

It is easy to check that $e(\phi,t)\in\{e(p,t),e(\neg p,t),(1,0),(0,1)\}$ for every $\phi\!\in\!\bimodalLtrianglesquare$ over one variable on the single-point irreflexive frame with a state $t$. Thus, for every $\chi\in\mathscr{L}_\Box$ and every $\psi\!\in\!\mathscr{L}_\lozenge$ it holds that
\begin{align*}
e(\Box\chi,w_0)&\in\left\{(0;1),\left(\frac{3}{5};\frac{3}{4}\right),\left(\frac{1}{4};\frac{3}{5}\right),\left(\frac{3}{4};\frac{3}{5}\right),\left(\frac{3}{5};\frac{1}{4}\right),(1;0)\right\}=X\\
e(\lozenge\psi,w_0)&\in\left\{(0;1),\left(\frac{4}{5};\frac{2}{4}\right),\left(\frac{2}{4};\frac{2}{5}\right),\left(\frac{2}{4};\frac{4}{5}\right),\left(\frac{2}{5};\frac{2}{4}\right),(1;0)\right\}=Y
\end{align*}
Now, let $X^c$ and $Y^c$ be the closures of $X$ and $Y$ under propositional operations. It is clear\footnote{Note that the closure under G\"{o}delian propositional operations of a given set $\{x_1,\ldots,x_n\}\subseteq[0,1]$ can only add $0$ and $1$ to this set but not an additional $x'\notin\{x_1,\ldots,x_n\}$ s.t.\ $0<x'<1$.} that $\left(\frac{3}{5};\frac{3}{4}\right)\notin Y^c$ and $\left(\frac{4}{5};\frac{2}{4}\right)\notin X^c$. It is also easy to verify by induction that for all $\chi'\in\mathscr{L}_\Box$ and $\psi'\in\mathscr{L}_\lozenge$, it holds that $e(\chi',w_0)\in X^c$ and $e(\psi',w_0)\in Y^c$. The result now follows.
\end{proof}

\begin{figure}
\centering
\[\xymatrix{w~\ar[rr]^(.3){R^+=R^-=\frac{1}{2}}&&~w':p=\left(1,\frac{2}{3}\right)}\]
\caption{A $\fuzzyKGsquare$ countermodel for $\lozenge{\sim\sim}p\rightarrow{\sim\sim}\lozenge p$.}
\label{fig:KGbirelnoextension}
\end{figure}
It is also easy to check (Fig.~\ref{fig:KGbirelnoextension}) that $\lozenge{\sim\sim}p\rightarrow{\sim\sim}\lozenge p$ \emph{is not strongly valid} in $\fuzzybirelKGsquare$ (and, in fact, in $\fuzzyKGsquare$ since the countermodel is \emph{monorelational}), even though it is $\fuzzyKbiG$-valid. Likewise, $\Box\mathbf{0}\vee{\sim}\Box\mathbf{0}$ is $\fuzzyKbiG$-valid but not $\KGsquare$-valid. Namely, $e(\Box\mathbf{0}\vee{\sim}\Box\mathbf{0},w)=(1,\frac{1}{2})$ (Fig.~\ref{fig:KGbirelnoextension}).

On the other hand, $\crispbirelKGsquare$ \emph{does extend} $\crispKbiG$.
\begin{lemma}\label{lemma:splitconflation}
Let $\mathfrak{M}=\langle W,R^+,R^-,e_1,e_2\rangle$ be a crisp $\birelKGsquare$ model. We define $$\mathfrak{M}^*=\langle W,(R^+)^*,(R^-)^*,e^*_1,e^*_2\rangle$$ to be as follows: $(R^+)^*=R^-$, $(R^-)^*=R^+$, $e^*_1(p,w)=1-e_2(p,w)$, and $e^*_2(p,w)=1-e_1(p,w)$.

Then, $e(\phi,w)=(x,y)$ iff $e^*(\phi,w)=(1-y,1-x)$.
\end{lemma}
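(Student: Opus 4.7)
The plan is to proceed by structural induction on $\phi$, showing that $e^*_1(\phi,w)=1-e_2(\phi,w)$ and $e^*_2(\phi,w)=1-e_1(\phi,w)$ hold for all $w\in W$. The base case $\phi=p$ is immediate from the definition of $e^*$.

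For the propositional connectives the key observation is that the map $a\mapsto 1-a$ on $[0,1]$ swaps $\min$ and $\max$, which handles $\wedge,\vee$ directly given the asymmetric support-of-falsity clauses (conjunction on $e_1$ uses $\wedge_\mathsf{G}$, on $e_2$ it uses $\vee_\mathsf{G}$, and vice versa for disjunction). For $\rightarrow$, I would verify the identities $(1-b)\rightarrow_\mathsf{G}(1-a)=1-(a\coimplies_\mathsf{G}b)$ and $(1-a)\coimplies_\mathsf{G}(1-b)=1-(b\rightarrow_\mathsf{G}a)$ by a short case split on whether $a\leq b$; combined with the IH on the immediate subformulas this delivers the inductive step, since $e_1(\phi\to\chi,w)$ uses $\rightarrow_\mathsf{G}$ while $e_2(\phi\to\chi,w)$ uses $\coimplies_\mathsf{G}$ with swapped arguments. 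The cases $\sim$ and $\triangle$ are similar one-line computations: $\sim_\mathsf{G}(1-a)$ is $1$ iff $a=1$, which matches $1-(1\coimplies_\mathsf{G}a)$, and $\triangle_\mathsf{G}(1-a)$ is $1$ iff $a=0$, matching $1-{\sim_\mathsf{G}\sim_\mathsf{G}}a$; the dual half is analogous. The De~Morgan negation $\neg$ is trivial since $e^*_1(\neg\phi)=e^*_2(\phi)=1-e_1(\phi)=1-e_2(\neg\phi)$.

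For the modal cases, crispness of $R^+$ and $R^-$ is essential. Under crispness, $wR^+w'\rightarrow_\mathsf{G}e_1(\phi,w')$ collapses to $e_1(\phi,w')$ when $w'\in R^+(w)$ and to $1$ otherwise, so $e_1(\Box\phi,w)=\inf_{w'\in R^+(w)}e_1(\phi,w')$; similarly $e_2(\Box\phi,w)=\sup_{w'\in R^-(w)}e_2(\phi,w')$, and dually for $\lozenge$. Then using $(R^+)^*=R^-$, $(R^-)^*=R^+$, the IH, and the elementary identities $1-\sup_i a_i=\inf_i(1-a_i)$ and $1-\inf_i a_i=\sup_i(1-a_i)$, one computes
\begin{align*}
e^*_1(\Box\phi,w)&=\inf_{w'\in R^-(w)}\bigl(1-e_2(\phi,w')\bigr)=1-\sup_{w'\in R^-(w)}e_2(\phi,w')=1-e_2(\Box\phi,w),
\end{align*}
and the $e^*_2$-clause as well as the two $\lozenge$-clauses are dual.

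The only genuine obstacle is bookkeeping: one must be careful that the asymmetric treatment of $\rightarrow$ vs.\ $\coimplies$ (and of $\sim$ vs.\ $\triangle$) in the support-of-falsity clauses is correctly tracked when passing through $a\mapsto 1-a$. It is also worth remarking explicitly that the argument breaks down without the crispness assumption, because for fuzzy $R^\pm$ the identity $a\rightarrow_\mathsf{G}(1-b)=1-(a\wedge_\mathsf{G}b)$ fails; this justifies the restricted formulation of the lemma.
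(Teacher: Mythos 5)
Your proof is correct and follows essentially the same route as the paper: structural induction with the modal cases handled by the relation swap together with the $\inf$/$\sup$ exchange under $a\mapsto 1-a$, relying on crispness to collapse $wR^+w'\rightarrow_\mathsf{G}e_1(\phi,w')$ to $e_1(\phi,w')$. The only difference is that you verify the propositional identities explicitly, whereas the paper delegates them to a cited earlier result; your closing remark on why the fuzzy case fails is a correct and welcome addition but not part of the paper's argument.
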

\begin{proof}
We proceed by induction on $\phi$. The basis case of propositional variables holds by the construction of $\mathfrak{M}^*$. The cases of propositional connectives can be shown as in~\cite[Proposition~5]{BilkovaFrittellaKozhemiachenko2021}. We consider the case of $\phi=\Box\psi$ since $\phi=\lozenge\psi$ can be tackled in the same manner.

Let $e(\Box\psi,w)=(x,y)$. Then $\inf\{e_1(\psi,w'):wR^+w'\}=x$, and $\sup\{e_2(\psi,w'):wR^-w'\}=y$. Now, we apply the induction hypothesis to $\psi$, and thus if $e(\psi,s)=(x',y')$, then $e^*_1(\psi,s)=1-y'$ and $e^*_2(\psi,s')=1-x'$ for any $s\in R^+(w)=(R^-)^*(w)$ and $s'\in R^-(w)=(R^+)^*(w)$. But then $\inf\{e^*_1(\psi,w'):w(R^+)^*w'\}=1-y$, and $\sup\{e^*_2(\psi,w'):w(R^-)^*w'\}=1-x$, as required.
\end{proof}

\begin{proposition}\label{prop:birelKGsquarecrispextension}
Let $\phi\in\bimodalLtriangle$. Then, $\phi$ is $\crispKbiG$-valid iff it is $\crispbirelKGsquare$-valid.
\end{proposition}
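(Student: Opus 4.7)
The plan is to prove both directions by contraposition, converting countermodels in one logic into countermodels in the other. The key observation that makes everything work is that in the fragment $\bimodalLtriangle$ (which contains no $\neg$), the support-of-truth clauses of $\KGsquare$ never reference $e_2$ or $R^-$, so the $e_1$-value behaves exactly as the $\crispKbiG$-value when $R^+$ is used as the accessibility relation.

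For the easier direction ($\crispbirelKGsquare$-valid $\Rightarrow$ $\crispKbiG$-valid), suppose $\phi$ is refuted in a $\crispKbiG$ model $\mathfrak{M}=\langle W,R,e\rangle$ at $w$, i.e., $e(\phi,w)<1$. I would expand $\mathfrak{M}$ into a crisp bi-relational model $\mathfrak{M}'=\langle W,R,R,e,e'\rangle$ by taking $R^+=R^-=R$, $e_1=e$, and picking any valuation $e'$ for $e_2$ (e.g., constantly $0$ on variables). A routine induction on $\phi\in\bimodalLtriangle$, using that the $e_1$-clauses for $\wedge,\vee,\rightarrow,{\sim},\triangle,\Box,\lozenge$ depend only on $e_1$ and $R^+$, yields $e_1(\phi,w)=e(\phi,w)<1$, so $\phi$ is not strongly $\crispbirelKGsquare$-valid on $\mathfrak{M}'$ at $w$.

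For the harder direction, suppose $\phi$ fails to be strongly valid on some crisp bi-relational model $\mathfrak{M}=\langle W,R^+,R^-,e_1,e_2\rangle$ at $w$. Then either (A) $e_1(\phi,w)<1$, or (B) $e_2(\phi,w)>0$. In case (A) I would project $\mathfrak{M}$ onto the $\crispKbiG$ model $\mathfrak{M}_+=\langle W,R^+,e_1\rangle$, and argue by induction on $\phi\in\bimodalLtriangle$ that the $\crispKbiG$-value of $\phi$ in $\mathfrak{M}_+$ at $w$ coincides with $e_1(\phi,w)<1$. Case (B) is reduced to case (A) by applying Lemma~\ref{lemma:splitconflation} to obtain the conflated model $\mathfrak{M}^*=\langle W,R^-,R^+,e^*_1,e^*_2\rangle$ in which $e^*_1(\phi,w)=1-e_2(\phi,w)<1$; then projecting $\mathfrak{M}^*$ onto $\langle W,R^-,e^*_1\rangle$ gives the desired $\crispKbiG$-countermodel.

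The main conceptual step is case (B): a failure of the support of falsity cannot be detected directly by any $\crispKbiG$ model since $\crispKbiG$ has only one valuation, so one needs to flip the model into one where the failure is visible as a support-of-truth failure, which is exactly what Lemma~\ref{lemma:splitconflation} provides. The argument is robust as long as no connective forcing interaction between $e_1$ and $e_2$ (such as $\neg$) appears in $\phi$; this is precisely why the equivalence is stated for $\phi\in\bimodalLtriangle$ and not for the full paraconsistent language.
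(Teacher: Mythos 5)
Your proposal is correct and follows essentially the same route as the paper: the $e_1$-clauses for $\neg$-free formulas coincide with the $\crispKbiG$ semantics over $R^+$, and a failure of $e_2$-validity is converted into a failure of $e_1$-validity (over $R^-$) via Lemma~\ref{lemma:splitconflation}. The only difference is that you spell out the routine inductions and the model projections more explicitly than the paper does.
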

\begin{proof}
Since the $e_1$-conditions coincide with the semantics of $\KbiG$, it is clear that if $\phi$ is \emph{not} $\KbiG$ valid, then it is not $\birelKGsquare$-valid either. For the converse, it follows from Lemma~\ref{lemma:splitconflation} that if $e_2(\phi,w)>0$ for some frame $\mathfrak{F}=\langle W,R^+,R^-\rangle$, $w\in\mathfrak{F}$ and $e_2$ on $\mathfrak{F}$, then $e^*_1(\phi,w)<1$. But $\phi$ does not contain $\neg$ and thus its support of falsity depends only on $e_2$ and $R^-$, whence $e^*_1$ is a $\KbiG$ valuation on $\langle W,R^-\rangle$. Thus, $\phi$ is not $\crispKbiG$-valid either.
\end{proof}

It is also clear that $\blacksquare\mathbf{1}$ and ${\sim}\blacklozenge\mathbf{0}$ are not strongly $\infoGsquare$-valid. Still, both $\blacksquare$ and $\blacklozenge$ are regular in the following sense.
\begin{proposition}\label{prop:infoGsquareregularity}
Let $\phi\rightarrow\phi'$ and $\chi\rightarrow\chi'$ be \emph{strongly valid}. Then $\blacksquare\phi\rightarrow\blacksquare\phi'$ and $\blacklozenge\chi\rightarrow\blacklozenge\chi'$ are strongly valid too.
\end{proposition}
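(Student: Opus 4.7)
The plan is to unpack strong validity of an implication into two pointwise order conditions, and then show these conditions propagate through the $\inf$/$\sup$-based truth conditions of $\blacksquare$ and $\blacklozenge$ via monotonicity. Concretely, strong validity of $\phi\rightarrow\phi'$ means $e_1(\phi\rightarrow\phi',w)=1$ and $e_2(\phi\rightarrow\phi',w)=0$ in every bi-relational model $\mathfrak{M}$ at every $w$; by Definition~\ref{def:paraconsistentsemantics} and the definitions of $\rightarrow_\mathsf{G}$ and $\coimplies_\mathsf{G}$, these are equivalent to
\begin{align*}
e_1(\phi,w)\leq e_1(\phi',w)\quad\text{and}\quad e_2(\phi',w)\leq e_2(\phi,w)
\end{align*}
for every model $\mathfrak{M}=\langle W,R^+,R^-,e_1,e_2\rangle$ and every $w\in W$. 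The same unpacking applies to $\chi\rightarrow\chi'$.

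For $\blacksquare\phi\rightarrow\blacksquare\phi'$, I would fix an arbitrary model and state $w$ and argue as follows. For each $w'\in W$, the pointwise inequality $e_1(\phi,w')\leq e_1(\phi',w')$ and right-monotonicity of $\rightarrow_\mathsf{G}$ give $wR^+w'\rightarrow_\mathsf{G}e_1(\phi,w')\leq wR^+w'\rightarrow_\mathsf{G}e_1(\phi',w')$; taking the infimum over $w'$ yields $e_1(\blacksquare\phi,w)\leq e_1(\blacksquare\phi',w)$. Symmetrically, from $e_2(\phi',w')\leq e_2(\phi,w')$ and right-monotonicity of $\rightarrow_\mathsf{G}$, the infimum over $w'$ using $R^-$ yields $e_2(\blacksquare\phi',w)\leq e_2(\blacksquare\phi,w)$. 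Together, these are exactly $e_1(\blacksquare\phi\rightarrow\blacksquare\phi',w)=1$ and $e_2(\blacksquare\phi\rightarrow\blacksquare\phi',w)=0$, i.e., strong validity.

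For $\blacklozenge\chi\rightarrow\blacklozenge\chi'$, I would use the dual argument with $\wedge_\mathsf{G}=\min$ and $\sup$. From $e_1(\chi,w')\leq e_1(\chi',w')$ and monotonicity of $\min$, one obtains $wR^+w'\wedge_\mathsf{G}e_1(\chi,w')\leq wR^+w'\wedge_\mathsf{G}e_1(\chi',w')$, and taking suprema gives $e_1(\blacklozenge\chi,w)\leq e_1(\blacklozenge\chi',w)$. From $e_2(\chi',w')\leq e_2(\chi,w')$ and monotonicity of $\min$, suprema over $R^-$ give $e_2(\blacklozenge\chi',w)\leq e_2(\blacklozenge\chi,w)$, completing strong validity of $\blacklozenge\chi\rightarrow\blacklozenge\chi'$.

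There is no real obstacle here; the statement is pure monotonicity of $\inf$, $\sup$, $\min$, and $\rightarrow_\mathsf{G}$ in its second argument, applied twice (once to the $e_1$-clause and once to the $e_2$-clause) and for each of the two modalities. The only subtlety worth flagging in the write-up is that ``strongly valid'' for an implication is \emph{not} the same as ``$e_1(\phi,w)=e_1(\phi',w)$ and $e_2(\phi,w)=e_2(\phi',w)$'': it is the asymmetric pair of inequalities above, with the $e_2$-inequality reversed. Keeping track of this reversal is the only place a careless reader could slip.
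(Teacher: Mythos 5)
Your proof is correct and follows essentially the same route as the paper's: both reduce the claim to the monotonicity of $\inf$ and $\sup$ under the pointwise inequalities $e_1(\phi,w)\leq e_1(\phi',w)$ and $e_2(\phi',w)\leq e_2(\phi,w)$ extracted from strong validity of the implication. The paper merely presents it contrapositively (and only details the $e_2$-clause for $\blacksquare$, delegating the $e_1$-clause to the regularity of $\Box$ in $\fuzzyKbiG$), whereas you argue directly; the mathematical content is identical.
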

\begin{proof}
We prove only the $\blacksquare$ case. Let $\blacksquare\phi\rightarrow\blacksquare\phi'$ be \emph{not strongly valid} in some frame $\mathfrak{F}$. Then, there is a~$w\in\mathfrak{F}$ as well as $e_1$ and $e_2$ thereon s.t.\ $e(\blacksquare\phi\rightarrow\blacksquare\phi',w)\neq(1,0)$. Since $e_1$-conditions (support of truth) of $\blacksquare$ coincide with the $\fuzzyKbiG$ semantics of $\Box$ (and since $\Box$ is obviously regular in $\fuzzyKbiG$), it suffices to check the case when $e_2(\blacksquare\phi\rightarrow\blacksquare\phi',w)>0$.

We have that
\begin{align*}
e_2(\blacksquare\phi\rightarrow\blacksquare\phi',w)>0&\text{ iff }e_2(\blacksquare\phi,w)<e_2(\blacksquare\phi',w)\\
&\text{ iff }\inf\limits_{w'\in W}\{wR^-w'\rightarrow_\mathsf{G}e_2(\phi)\}<\inf\limits_{w'\in W}\{wR^-w'\rightarrow_\mathsf{G}e_2(\phi')\}\\
&\text{ then }\exists w'\!\in\!R^-(w):e_2(\phi,w')<e_2(\phi',w')\\
&\text{ then }e_2(\phi\rightarrow\phi',w')>0
\end{align*}
The regularity of $\blacklozenge$ can be tackled similarly.
\end{proof}
\subsection{Embeddings into $\KbiG$}
In this section, we are going to construct faithful embeddings of $\KGsquare$'s and $\infoGsquare$'s into $\KbiG$. To do this in the bi-relational case, we introduce new modal operators that will enable us to convert formulas into $\neg$~NNFs.
\begin{definition}\label{def:linemodalities}
The languages $\linebimodalLtrianglesquare$ and $\lineinfobimodalLtrianglesquare$ expand $\bimodalLtrianglesquare$ and $\infobimodalLtrianglesquare$, respectively, with two new modal operators each: $\lineBox$ and $\linelozenge$ ($\linebimodalLtrianglesquare$); $\lineblacksquare$ and $\lineblacklozenge$ ($\lineinfobimodalLtrianglesquare$). Their semantics is given as follows.
\begin{center}
\begin{tabular}{rclrcl}
$e_1(\lineBox\phi,w)$&$=$&$\inf\limits_{w'\in W}\!\{wR^-w'\rightarrow_\mathsf{G}e_1(\phi,w')\}$
&
$e_2(\lineBox\phi,w)$&$=$&$\sup\limits_{w'\in W}\!\{wR^+w'\wedge_\mathsf{G}e_2(\phi,w')\}$\\
$e_1(\linelozenge\phi,w)$&$=$&$\sup\limits_{w'\in W}\!\{wR^-w'\wedge_\mathsf{G}e_1(\phi,w')\}$
&
$e_2(\linelozenge\phi,w)$&$=$&$\inf\limits_{w'\in W}\!\{wR^+w'\rightarrow_\mathsf{G}e_2(\phi,w')\}$\\
$e_1(\lineblacksquare\phi,w)$&$=$&$\inf\limits_{w'\in W}\!\{wR^-w'\!\!\rightarrow_\mathsf{G}e_1(\phi,w')\}$
&
$e_2(\lineblacksquare\phi,w)$&$=$&$\inf\limits_{w'\in W}\!\{wR^+w'\!\!\rightarrow_\mathsf{G}e_2(\phi,w')\}$\\
$e_1(\lineblacklozenge\phi,w)$&$=$&$\sup\limits_{w'\in W}\!\{wR^-w'\!\wedge_\mathsf{G}\!e_1(\phi,w')\}$
&
$e_2(\lineblacklozenge\phi,w)$&$=$&$\sup\limits_{w'\in W}\!\{wR^+w'\!\wedge_\mathsf{G}\!e_2(\phi,w')\}$
\end{tabular}
\end{center}
\end{definition}
\begin{remark}[$\neg$ NNFs in $\linebimodalLtrianglesquare$ and $\lineinfobimodalLtrianglesquare$]\label{rem:NNFs}
It is now clear that $\linebimodalLtrianglesquare$ and $\lineinfobimodalLtrianglesquare$ admit $\neg$~NNFs. Namely, the following transformations are equivalent. (For the sake of convenience, we give the transformations for all connectives and modalities, including $\coimplies$ and double G\"{o}delian negation ${\sim\sim}$.)
\begin{align}
\neg\mathbf{1}&\leftrightharpoons\mathbf{0}&\neg\mathbf{0}&\leftrightharpoons\mathbf{1}\nonumber\\
\neg\neg\phi&\leftrightharpoons\phi\nonumber&\neg{\sim}\phi&\leftrightharpoons\mathbf{1}\coimplies\neg\phi&\neg\triangle\phi&\leftrightharpoons{\sim\sim}\neg\phi&\neg{\sim\sim}\phi&\leftrightharpoons\triangle\neg\phi\\
\neg(\phi\wedge\chi)&\leftrightharpoons\neg\phi\vee\neg\chi&\neg(\phi\vee\chi)&\leftrightharpoons\neg\phi\wedge\neg\chi&\neg(\phi\rightarrow\chi)&\leftrightharpoons\neg\chi\coimplies\neg\phi&\neg(\phi\coimplies\chi)&\leftrightharpoons\neg\chi\rightarrow\neg\phi\nonumber\\
\neg\Box\phi&\leftrightharpoons\linelozenge\neg\phi&\neg\lozenge\phi&\leftrightharpoons\lineBox\neg\phi&\neg\lineBox\phi&\leftrightharpoons\lozenge\neg\phi&\neg\linelozenge\phi&\leftrightharpoons\Box\neg\phi\nonumber\\
\neg\blacksquare\phi&\leftrightharpoons\lineblacksquare\neg\phi&\neg\blacklozenge\phi&\leftrightharpoons\lineblacklozenge\neg\phi&\neg\lineblacksquare\phi&\leftrightharpoons\blacksquare\neg\phi&\neg\lineblacklozenge\phi&\leftrightharpoons\blacklozenge\neg\phi
\label{equ:lineNNFs}
\end{align}
\end{remark}

Now, since the transformation into an NNF requires the introduction of a new pair of modalities, we will need to construct our embeddings not into the mono-relational $\KbiG$ but in the \emph{bi-relational} $\KbiG$ which we will denote $\KbiG(2)$.\footnote{In what follows, we will sometimes call $\KbiG$ ‘mono-relational $\KbiG$’ and $\KbiG(2)$ ‘bi-relational $\KbiG$’. Since $\Box$ and $\lozenge$ are not interdefinable in $\KbiG$~\cite[Proposition~3]{BilkovaFrittellaKozhemiachenko2022IJCAR} even for crisp frames, and since G\"{o}del modal logics in the language with $\Box$ and~$\lozenge$ are called ‘bi-modal’, a proper moniker would be ‘tetra-modal’. Despite this, we choose ‘bi-relational’ to designate that each pair of modalities correspond to one relation of the two.} The language of $\KbiG(2)$, $\bimodalLtriangle(2)$ contains two pairs of modalities: $\Box_1$, $\lozenge_1$, $\Box_2$, and $\lozenge_2$. It is also clear that the axiomatisation of both $\crispKbiG(2)$ and $\fuzzyKbiG(2)$ can be obtained from $\HcrispKbiG$ and $\HfuzzyKbiG$ by replicating the modal axioms and rules for $\Box_2$ and $\lozenge_2$. To construct these embeddings, we are going to reduce every $\linebimodalLtrianglesquare$ or $\lineinfobimodalLtrianglesquare$ formula $\phi$ not to one but to \emph{two} $\bimodalLtriangle(2)$ formulas: $\phi^*$ and $\phi^\partial$. Moreover, we will treat $\coimplies$ as a~basic connective to make the size of embeddings linear. Since bi-relational fuzzy frames constitute the largest class of frames, these embeddings will suffice for all paraconsistent modal logics in~Fig.~\ref{fig:logics}.
\begin{definition}\label{def:translations}
Let $\phi\in\linebimodalLtrianglesquare\cup\lineinfobimodalLtrianglesquare$ be in $\neg$~NNF. Then $\phi^*$ is the result of replacing every literal $\neg p$ occurring in $\phi$ with a new variable $p^*$. Given $\chi^*\in\linebimodalLtrianglesquare\cup\lineinfobimodalLtrianglesquare$, we define $\chi^\partial$ as follows.
\begin{align*}
p^\partial&=p^*&(p^*)^\partial&=p\\
\mathbf{1}^\partial&=\mathbf{0}&\mathbf{0}^\partial&=\mathbf{1}\\
({\sim}\chi)^\partial&=\mathbf{1}\coimplies\chi^\partial&(\triangle\chi)^\partial&={\sim\sim}\chi^\partial\\
(\chi_1\wedge\chi_2)^\partial&=\chi^\partial_1\vee\chi^\partial_2&(\chi_1\vee\chi_2)^\partial&=\chi^\partial_1\wedge\chi^\partial_2\\
(\chi_1\rightarrow\chi_2)^\partial&=\chi^\partial_2\coimplies\chi^\partial_1&(\chi_1\coimplies\chi_2)^\partial&=\chi^\partial_2\rightarrow\chi^\partial_1\\
(\Box\chi)^\partial&=\lozenge_2\chi^\partial&(\blacksquare\chi)^\partial&=\Box_2\chi^\partial\\
(\lozenge\chi)^\partial&=\Box_2\chi^\partial&(\blacklozenge\psi)^\partial&=\lozenge_2\chi^\partial\\
(\lineBox\chi)^\partial&=\lozenge_1\chi^\partial&(\lineblacksquare\chi)^\partial&=\Box_1\chi^\partial\\
(\linelozenge\chi)^\partial&=\Box_1\chi^\partial&(\lineblacklozenge\chi)^\partial&=\lozenge_1\chi^\partial
\end{align*}
\end{definition}
\begin{convention}\label{conv:monorelationallanguage}
In what follows, if we deal with \emph{mono-relational frames}, we will assume that $^\partial$ translation does not add modalities of the form $\overline{\heartsuit}$. Indeed, it is clear that $\heartsuit p\leftrightarrow\overline{\heartsuit} p$ is strongly valid on any mono-relational frame for every pair of modalities $\heartsuit$ and $\overline{\heartsuit}$.
\end{convention}
\begin{convention}\label{conv:circtranslation}~
\begin{enumerate}
\item Let $\phi\in\lineinfobimodalLtrianglesquare$ be $\neg$-free. We use $\phi^\circ$ to denote the formula obtained by replacing all $\blacksquare$'s, $\blacklozenge$'s, $\lineblacksquare$'s and $\lineblacklozenge$'s in $\phi$ with $\Box$'s, $\lozenge$'s, $\lineBox$'s, and $\linelozenge$'s, respectively.
\item Let $\phi\in\linebimodalLtrianglesquare$. We use $\phi^{+\bullet}$ to denote the formula obtained by replacing all $\Box$'s, $\lozenge$'s, $\lineBox$'s, and $\linelozenge$'s in $\phi$ with $\blacksquare$'s, $\blacklozenge$'s, $\lineblacksquare$'s and $\lineblacklozenge$'s, respectively.
\end{enumerate}
\end{convention}

It is easy to see that $^*$-transformation preserves validity on frames.
\begin{lemma}\label{lemma:negNNF}
Let $\phi\in\linebimodalLtrianglesquare\cup\lineinfobimodalLtrianglesquare$ be in $\neg$~NNF. Then for every pointed frame $\langle\mathfrak{F},w\rangle$, it holds that
\begin{align*}
\mathfrak{F},w\models^+_{\KGsquare}\phi&\text{ iff }\mathfrak{F},w\models^+_{\KGsquare}\phi^*&\mathfrak{F},w\models^-_{\KGsquare}\phi&\text{ iff }\mathfrak{F},w\models^-_{\KGsquare}\phi^*\\
\mathfrak{F},w\models^+_{\infoGsquare}\phi&\text{ iff }\mathfrak{F},w\models^+_{\infoGsquare}\phi^*&\mathfrak{F},w\models^-_{\infoGsquare}\phi&\text{ iff }\mathfrak{F},w\models^-_{\infoGsquare}\phi^*
\end{align*}
\end{lemma}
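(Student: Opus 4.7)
The plan is to establish the four equivalences via a single inductive lemma and then four short instantiation arguments. The key observation driving the proof is that since $\phi$ is in $\neg$~NNF, every occurrence of $\neg$ sits in front of a propositional variable, and the clauses of Definitions~\ref{def:paraconsistentsemantics} and~\ref{def:linemodalities} have the pleasant property that for every connective and for every one of the eight modalities $\Box,\lozenge,\lineBox,\linelozenge,\blacksquare,\blacklozenge,\lineblacksquare,\lineblacklozenge$, the support of truth $e_1(\heartsuit\psi,w)$ depends only on the values $e_1(\psi,\cdot)$ and, symmetrically, $e_2(\heartsuit\psi,w)$ depends only on $e_2(\psi,\cdot)$. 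Hence computing $e_1(\phi,w)$ requires only $e_1$ of the positive literals of $\phi$ together with $e_2$ of the variables underneath the negative literals; dually for $e_2(\phi,w)$.

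First I would state and prove the auxiliary claim: for any valuation $e$ on $\mathfrak{F}$ in the variables of $\phi$ and any valuation $e^\dagger$ on $\mathfrak{F}$ in the variables of $\phi^*$ such that
\begin{itemize}
\item $e_1(p,v)=e_1^\dagger(p,v)$ for every $p$ appearing positively in $\phi$, and
\item $e_2(p,v)=e_1^\dagger(p^*,v)$ for every $p$ such that $\neg p$ appears in $\phi$,
\end{itemize}
we have $e_1(\phi,v)=e_1^\dagger(\phi^*,v)$ at every $v\in\mathfrak{F}$; and symmetrically, if $e_2(p,v)=e_2^\dagger(p,v)$ for positive $p$ and $e_1(p,v)=e_2^\dagger(p^*,v)$ for negative $p$, then $e_2(\phi,v)=e_2^\dagger(\phi^*,v)$. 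The induction is routine: the literal cases match by the linking hypothesis, the propositional cases follow because $e_1$ of a compound formula depends only on the $e_1$'s of its components (likewise for $e_2$), and the eight modal cases are identical for both logics because they share the same pattern in $e_1$ and $e_2$.

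Given the auxiliary claim, each of the four biconditionals is immediate. For $\mathfrak{F},w\models^+\phi\Rightarrow\mathfrak{F},w\models^+\phi^*$, take any $e^\dagger$, define a linked $e$ by setting $e_1(p,v):=e_1^\dagger(p,v)$ on positive variables, $e_2(p,v):=e_1^\dagger(p^*,v)$ on negatively occurring variables, and arbitrary elsewhere; then $e_1^\dagger(\phi^*,w)=e_1(\phi,w)=1$ by the assumed validity. The converse direction is symmetric, using $e^\dagger$ built from $e$. The two clauses for $\models^-$ are obtained by repeating the same argument with $e_2^\dagger$ and the dual linking conditions. Since the inductive step is uniform across $\KGsquare$ and $\infoGsquare$, the same proof yields all four statements in a single pass.

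The one delicate point, and the part I expect to be the main obstacle when writing it out carefully, is the case where a variable $p$ appears both positively and negatively in $\phi$: in $\phi^*$, both $p$ and the fresh $p^*$ then occur as independent variables, and a reader might worry that the construction of the linked valuation over-determines $e(p,v)$. The resolution is that the two linking conditions pin down $e_1(p,v)$ from the positive occurrences and $e_2(p,v)$ from the negative occurrences, and these are independent coordinates of a paraconsistent valuation. This is precisely where the four-valued flavour of the semantics pays off: in a classically negated setting the two conditions could clash, but here they can always be satisfied simultaneously, and the induction goes through uneventfully.
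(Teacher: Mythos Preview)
Your proposal is correct and follows essentially the same approach as the paper: both arguments exploit that in an NNF formula the only cross-over between $e_1$ and $e_2$ happens at negative literals, so a valuation for $\phi^*$ can be manufactured from one for $\phi$ (and vice versa) by matching $e_i(p,\cdot)$ with $e_i^\dagger(p,\cdot)$ on positive variables and $e_{3-i}(p,\cdot)$ with $e_i^\dagger(p^*,\cdot)$ on negative ones, after which a routine induction gives equality of values. Your version is simply more explicit---packaging the correspondence as a single auxiliary lemma ranging over all states and spelling out why variables occurring with both polarities cause no clash---while the paper leaves these points to the reader.
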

\begin{proof}
Let $\phi$ be in $\neg$~NNF and $e_1$ an arbitrary valuation. We construct $e^*_1$ as follows: $e^*_1(p,u)=e_1(p,u)$ and $e^*_1(p^*,u)=e_2(p,u)=e_1(\neg p,u)$ for all $u\in\mathfrak{F}$. It is clear that $e_1(\phi,w)=e^*_1(\phi^*,w)$. For the converse, we define $e'_1(p,u)=e_1(p,u)$ and $e'_2(p,u)=e'_1(\neg p,u)=e_1(p^*,u)$ for all $u\in\mathfrak{F}$. Again, it is easy to check by induction that $e_1(\phi^*,w)=e'_1(\phi,w)$.

The $e_2$-validity can be considered in the same way.
\end{proof}

The next two lemmas establish that $^\partial$ preserves values.
\begin{lemma}\label{lemma:partialembedding}
Let $\mathfrak{M}=\langle W,R^+,R^-,e_1,e_2\rangle$ be $\fuzzybirelKGsquare$ model and let $\mathfrak{M}^\partial\!=\!\langle W,R_1,R_2,e^\partial\rangle$ be a $\fuzzyKbiG(2)$ model s.t.\ $R_1=R^+$, $R_2=R^-$, $e^\partial(p,w)=e_2(p^*,w)$, and $e^\partial(p^*,w)=e_2(p,w)$. Then, $e_2(\phi^*)=e^\partial(\phi^\partial)$ for every $\phi^*\in\linebimodalLtrianglesquare$.
\end{lemma}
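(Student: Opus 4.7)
The plan is to proceed by structural induction on $\phi^*$, exploiting the fact that the $^*$-transformation already removed all occurrences of $\neg$ (every $\neg p$ was replaced by a fresh variable $p^*$), so we never need a $\neg$-clause in the induction. The base cases $\phi^* = p$ and $\phi^* = p^*$ hold by the very definition of $e^\partial$: indeed, $p^\partial = p^*$ and $(p^*)^\partial = p$, while $e^\partial(p^*, w) = e_2(p, w)$ and $e^\partial(p, w) = e_2(p^*, w)$. The constant cases $\mathbf{1}^\partial = \mathbf{0}$ and $\mathbf{0}^\partial = \mathbf{1}$ match because $e_2(\mathbf{1}, w) = e_2(p\rightarrow p, w) = 0$ and $e_2(\mathbf{0}, w) = e_2({\sim}\mathbf{1}, w) = 1$ in $\fuzzybirelKGsquare$, while the corresponding $\fuzzyKbiG(2)$ values of $\mathbf{0}$ and $\mathbf{1}$ are $0$ and $1$.

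For the propositional inductive cases, the key observation is that the $e_2$ clauses of Definition~\ref{def:paraconsistentsemantics} are precisely the $e_1$ clauses of the $^\partial$-image of the connective in $[0,1]_{\biG}$. Concretely, one checks term-by-term that $e_2({\sim}\chi) = 1\coimplies_\mathsf{G} e_2(\chi)$ agrees with $e^\partial(\mathbf{1}\coimplies\chi^\partial)$; that $e_2(\triangle\chi) = {\sim}_\mathsf{G}{\sim}_\mathsf{G} e_2(\chi)$ agrees with $e^\partial({\sim\sim}\chi^\partial)$; that $e_2(\chi_1 \wedge \chi_2) = e_2(\chi_1) \vee_\mathsf{G} e_2(\chi_2)$ agrees with $e^\partial(\chi_1^\partial \vee \chi_2^\partial)$; and, most interestingly, that $e_2(\chi_1 \rightarrow \chi_2) = e_2(\chi_2) \coimplies_\mathsf{G} e_2(\chi_1)$ agrees with $e^\partial(\chi_2^\partial \coimplies \chi_1^\partial)$, which is why keeping $\coimplies$ as a primitive in the target language is essential. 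Each verification is a one-line application of the induction hypothesis.

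The modal cases are where the design of $^\partial$ and of the relational pairing $R_1 = R^+$, $R_2 = R^-$ pays off. For $\phi^* = \Box\chi$ we compute
\[
e_2(\Box\chi, w) = \sup_{w'\in W}\{wR^-w' \wedge_\mathsf{G} e_2(\chi, w')\},
\]
while $(\Box\chi)^\partial = \lozenge_2\chi^\partial$ gives
\[
e^\partial(\lozenge_2\chi^\partial, w) = \sup_{w'\in W}\{wR_2 w' \wedge_\mathsf{G} e^\partial(\chi^\partial, w')\},
\]
and these agree by the inductive hypothesis together with $R_2 = R^-$. The case $\lozenge\chi \mapsto \Box_2\chi^\partial$ is dual, using the $\inf/\!\rightarrow_\mathsf{G}$ clause. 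The barred modalities are handled identically but with the opposite relation: for $\lineBox\chi$ the $e_2$-clause uses $R^+$, and $(\lineBox\chi)^\partial = \lozenge_1\chi^\partial$ is interpreted via $R_1 = R^+$; likewise for $\linelozenge$.

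I do not anticipate any genuine obstacle — the statement is essentially a bookkeeping verification that the $e_2$-clauses of $\fuzzybirelKGsquare$ are, by design, isomorphic to the $e_1$-clauses of $\fuzzyKbiG(2)$ once one swaps $R^+ \leftrightrightarrows R_1$, $R^- \leftrightrightarrows R_2$, and dualises the propositional connectives via $^\partial$. The one point that needs attention is to maintain the correct pairing of a modality with its relation throughout the induction, so that $\Box, \lozenge$ (tied to $R^-$ on the $e_2$ side) are translated using the $R_2$-modalities $\lozenge_2, \Box_2$, while $\lineBox, \linelozenge$ (tied to $R^+$ on the $e_2$ side) are translated using the $R_1$-modalities $\lozenge_1, \Box_1$; this is precisely what Definition~\ref{def:translations} ensures.
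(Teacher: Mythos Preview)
Your proposal is correct and follows essentially the same approach as the paper's own proof: a straightforward structural induction on $\phi^*$, with the base cases handled by the definition of $e^\partial$ and the inductive cases by matching the $e_2$-clauses of Definition~\ref{def:paraconsistentsemantics} (and Definition~\ref{def:linemodalities}) against the $\KbiG(2)$ semantics of the $^\partial$-images. You in fact spell out more cases explicitly (constants, ${\sim}$, $\triangle$, $\rightarrow$, and all four modalities) than the paper does, but the argument and the key pairing of $\Box,\lozenge$ with $R_2$-modalities and $\lineBox,\linelozenge$ with $R_1$-modalities are identical.
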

\begin{proof}
We proceed by induction on $\phi^*$. The basis cases of $p$ and $p^*$ variables hold by the construction of $\mathfrak{M}^\partial$. The cases of propositional connectives hold by a simple application of the induction hypothesis. Consider, for example, $\phi^*=\chi_1\wedge\chi_2$.
\begin{align*}
e_2(\chi_1\wedge\chi_2,w)&=e_2(\chi_1,w)\vee_\mathsf{G}e_2(\chi_2,w)\\
&=e^\partial(\chi^\partial_1,w)\vee_\mathsf{G}e^\partial(\chi^\partial_2,w)\tag{by IH}\\
&=e^\partial(\chi^\partial_1\vee\chi^\partial_2,w)\\
&=e^\partial((\chi_1\wedge\chi_2)^\partial,w)\tag{by Definition~\ref{def:translations}}
\end{align*}

The cases of modal formulas can also be tackled in a similar manner, whence we consider only $\phi^*=\linelozenge\chi$.
\begin{align*}
e_2(\linelozenge\chi,w)&=\inf\limits_{w'\in W}\{wR^+w'\rightarrow_\mathsf{G}e_2(\chi,w)\}\\
&=\inf\limits_{w'\in W}\{wR^+w'\rightarrow_\mathsf{G}e^\partial(\chi^\partial,w)\}\tag{by IH}\\
&=e^\partial(\Box\chi^\partial,w)\\
&=e^\partial((\linelozenge\chi)^\partial,w)\tag{by Definition~\ref{def:translations}}
\end{align*}
\end{proof}
\begin{lemma}\label{lemma:proptoembedding}
Let $\mathfrak{M}=\langle W,R^+,R^-,e_1,e_2\rangle$ be a~$\fuzzyinfoGsquare$ model and let $\mathfrak{M}^\partial=\langle W,R_1,R_2,e^\partial\rangle$ be a $\fuzzyKbiG(2)$ model s.t.\ $R_1=R^+$, $R_2=R^-$, $e^\partial(p,w)=e_2(p^*,w)$, and $e^\partial(p^*,w)=e_2(p,w)$. Then, $e_2(\phi^*)=e^\partial(\phi^\partial)$ for every $\phi^*\in\lineinfobimodalLtrianglesquare$.
\end{lemma}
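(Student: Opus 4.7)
The plan is to mirror the proof of Lemma~\ref{lemma:partialembedding} essentially verbatim, proceeding by induction on the complexity of $\phi^*\in\lineinfobimodalLtrianglesquare$. The basis cases for variables $p$ and $p^*$ hold directly by the construction of $\mathfrak{M}^\partial$ (namely $e^\partial(p,w)=e_2(p^*,w)$ and $e^\partial(p^*,w)=e_2(p,w)$), and the propositional cases (including $\sim$, $\triangle$, $\wedge$, $\vee$, $\rightarrow$, $\coimplies$) unfold exactly as in the previous lemma since the propositional $e_2$-clauses and the corresponding clauses of Definition~\ref{def:translations} are identical across the two languages. So the only genuinely new content lies in the four modal cases $\blacksquare\chi$, $\blacklozenge\chi$, $\lineblacksquare\chi$, $\lineblacklozenge\chi$.

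For these cases, I would simply line up the semantic clauses from Definition~\ref{def:paraconsistentsemantics} and Definition~\ref{def:linemodalities} with the corresponding cases of Definition~\ref{def:translations}, invoking the induction hypothesis on $\chi$. Observe, for instance, that the $e_2$-clause for $\blacksquare$ has the shape $\inf_{w'}\{wR^-w'\rightarrow_\mathsf{G} e_2(\chi,w')\}$, i.e., a ``box-like'' pattern over $R^-$; since $R_2=R^-$ and $(\blacksquare\chi)^\partial=\Box_2\chi^\partial$, the computation
\begin{align*}
e_2(\blacksquare\chi,w)&=\inf_{w'\in W}\{wR^-w'\rightarrow_\mathsf{G}e_2(\chi,w')\}\\
&=\inf_{w'\in W}\{wR_2w'\rightarrow_\mathsf{G}e^\partial(\chi^\partial,w')\}=e^\partial(\Box_2\chi^\partial,w)
\end{align*}
goes through by the induction hypothesis. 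The three remaining modal cases are entirely analogous: $\blacklozenge$ matches $\lozenge_2$ over $R_2=R^-$, while the ``overlined'' informational modalities $\lineblacksquare,\lineblacklozenge$ have $e_2$-clauses governed by $R^+$, hence match $\Box_1,\lozenge_1$ over $R_1=R^+$ under $^\partial$.

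There is no real obstacle here — the lemma is the informational-modality twin of Lemma~\ref{lemma:partialembedding}, and the key design choice that makes everything work (and which was already built into Definition~\ref{def:translations}) is that, unlike the standard modalities $\Box,\lozenge$ whose falsity supports have a ``swapped'' box/diamond shape (so $(\Box\chi)^\partial=\lozenge_2\chi^\partial$), the informational modalities keep box-shape for $\blacksquare$ and diamond-shape for $\blacklozenge$ on the falsity side as well, hence $(\blacksquare\chi)^\partial=\Box_2\chi^\partial$ and $(\blacklozenge\chi)^\partial=\lozenge_2\chi^\partial$. Once this correspondence is visible, the induction is mechanical, and I would write it out only for one box-type and one diamond-type modality (say $\blacksquare$ and $\lineblacklozenge$), indicating that the other two are treated analogously.
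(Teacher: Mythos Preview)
Your proposal is correct and is exactly what the paper intends: its own proof consists of the single sentence ``Analogously to Lemma~\ref{lemma:partialembedding}.'' You have simply unpacked that analogy, correctly identifying that the only new content is the four informational-modality cases and that these go through because the $e_2$-clauses for $\blacksquare,\blacklozenge,\lineblacksquare,\lineblacklozenge$ already have the box/diamond shape matching $\Box_2,\lozenge_2,\Box_1,\lozenge_1$ under~$^\partial$.
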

\begin{proof}
Analogously to Lemma~\ref{lemma:partialembedding}.
\end{proof}

We can also obtain a result similar to Lemma~\ref{lemma:negNNF} but regarding $^\partial$.
\begin{proposition}\label{prop:partialproptoequivalence}
Let $\heartsuit\in\{\Box,\lozenge\}$ and $\heartsuit_1$ and $\heartsuit_2$ be evaluated the same as $\heartsuit$ and $\overline{\heartsuit}$, respectively. Then the following statements hold.
\begin{enumerate}
\item Assume that $\phi\in\linebimodalLtrianglesquare$. Then $\mathfrak{F},w\models_{\KGsquare}\phi$ iff $\mathfrak{F},w\models_{\KGsquare}{\sim}\phi^\partial$.
\item Assume that $\phi\in\lineinfobimodalLtrianglesquare$. Then $\mathfrak{F},w\models_{\infoGsquare}\phi$ iff $\mathfrak{F},w\models_{\infoGsquare}\!({\sim}\phi^\partial)^{+\bullet}$.
\end{enumerate}
\end{proposition}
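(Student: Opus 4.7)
The plan is to prove part~(1) in detail and indicate how part~(2) follows by the same scheme, with Lemma~\ref{lemma:proptoembedding} replacing Lemma~\ref{lemma:partialembedding} and the translation $^{+\bullet}$ absorbing the switch from standard to informational modalities. By Remark~\ref{rem:NNFs} we may assume $\phi$ is in $\neg$-NNF, and by Lemma~\ref{lemma:negNNF} the strong validity of $\phi$ at $\langle\mathfrak{F},w\rangle$ coincides with the strong validity of $\phi^*$, so from now on $p$ and $p^*$ may be treated as independent propositional variables.

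Call a valuation $(e_1,e_2)$ on $\mathfrak{F}$ \emph{compatible} when $e_1(p^*,u)=e_2(p,u)$ and $e_2(p^*,u)=e_1(p,u)$ for every variable $p$ and every $u\in W$. The core semantic observation is that on every compatible valuation and every $u\in W$,
\[
e_1(\phi^*,u)=e_2(\phi^\partial,u)\quad\text{and}\quad e_2(\phi^*,u)=e_1(\phi^\partial,u),
\]
proved by induction on $\phi^*\in\linebimodalLtrianglesquare$. The base cases $\phi^*=p$ and $\phi^*=p^*$ are immediate from compatibility. The propositional cases combine Definition~\ref{def:translations} with the dualising $e_2$-clauses of Definition~\ref{def:paraconsistentsemantics}: e.g.\ $(\chi\wedge\chi')^\partial=\chi^\partial\vee\chi'^\partial$ dovetails with $e_2(\chi\wedge\chi')=e_2(\chi)\vee_\mathsf{G}e_2(\chi')$, and $(\chi\rightarrow\chi')^\partial=\chi'^\partial\coimplies\chi^\partial$ dovetails with the $e_2$-clause for $\rightarrow$. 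The modal cases use the convention of the proposition that $\heartsuit_1,\heartsuit_2$ are evaluated as $\heartsuit,\overline{\heartsuit}$: for instance $(\Box\chi)^\partial=\lozenge_2\chi^\partial$ and $e_2(\Box\chi,u)=\sup\{uR^-v\wedge_\mathsf{G}e_2(\chi,v)\}=e_1(\lozenge_2\chi^\partial,u)$ by the induction hypothesis, since $\lozenge_2$ is evaluated on $e_1$ exactly as $\overline{\lozenge}$, which uses $R^-$.

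Part~(1) then falls out. For the forward direction, assume $\mathfrak{F},w\models\phi$ and let $(e_1,e_2)$ be arbitrary. Set $e_2^\flat(p,u):=e_1(p^*,u)$ and $e_2^\flat(p^*,u):=e_1(p,u)$, so $(e_1,e_2^\flat)$ is compatible; the duality gives $e_1(\phi^\partial,w)=e_2^\flat(\phi^*,w)=0$ by $e_2$-validity of $\phi^*$. Defining $e_1^\flat(p,u):=e_2(p^*,u)$ and $e_1^\flat(p^*,u):=e_2(p,u)$ symmetrically makes $(e_1^\flat,e_2)$ compatible and yields $e_2(\phi^\partial,w)=e_1^\flat(\phi^*,w)=1$ by $e_1$-validity of $\phi^*$. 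Hence $e_1({\sim}\phi^\partial,w)=1$ and $e_2({\sim}\phi^\partial,w)=0$. For the converse, assume $\mathfrak{F},w\models{\sim}\phi^\partial$ and let $(e_1,e_2)$ be arbitrary; the same two companion valuations give, via the duality, $e_1(\phi^*,w)=e_2^\flat(\phi^\partial,w)=1$ and $e_2(\phi^*,w)=e_1^\flat(\phi^\partial,w)=0$.

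Part~(2) runs through the same scheme for $\phi\in\lineinfobimodalLtrianglesquare$ using Lemma~\ref{lemma:proptoembedding} in place of Lemma~\ref{lemma:partialembedding}, now establishing the duality $e_1(\phi^*,u)=e_2((\phi^\partial)^{+\bullet},u)$ and $e_2(\phi^*,u)=e_1((\phi^\partial)^{+\bullet},u)$. The $^{+\bullet}$ step is forced by the modal case: the naive image $\phi^\partial$ sends e.g.\ $\blacksquare$ to $\Box_2$, whose $e_2$-clause uses a supremum rather than the infimum needed to match $e_1(\blacksquare\chi,\cdot)$; applying $^{+\bullet}$ replaces each indexed standard modality by its informational counterpart and restores the correct $\inf$-based $e_2$-clause, after which the induction step for modalities proceeds exactly as in part~(1). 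The main point of delicacy throughout is the simultaneous bookkeeping across two valuations and four relation-indexed modalities, which the inductive structure of $^\partial$ (and $^{+\bullet}$ in part~(2)) handles uniformly.
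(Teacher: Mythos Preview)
Your proof is correct. Both your argument and the paper's rest on the same inductive duality between $e_1$ and $e_2$ under~$^\partial$, but you organise it differently. You prove once, for \emph{compatible} valuations, the two-sided identity $e_1(\phi^*,u)=e_2(\phi^\partial,u)$ and $e_2(\phi^*,u)=e_1(\phi^\partial,u)$, and then derive both implications directly by pairing an arbitrary $e_i$ with its companion $e_j^\flat$. The paper instead treats the two halves of the swap asymmetrically: the $e_2\to e_1$ direction is read off from Lemma~\ref{lemma:partialembedding} (passing through a $\KbiG$ valuation), the $e_1\to e_2$ direction is proved by a fresh induction, and the converse of the iff is obtained not from the same duality but by applying the already-proved direction to ${\sim}\phi^\partial$ and invoking the involution $(\psi^\partial)^\partial=\psi$. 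Your route is self-contained within $\KGsquare$/$\infoGsquare$ and avoids the involution entirely; the paper's route reuses earlier lemmas rather than reproving their content.
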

\begin{proof}
We prove only 1.\ since 2.\ can be shown in the same way. Let $\mathfrak{F},w\not\models_{\KGsquare}\phi$. Then either (1) there is $e_1$ on $\mathfrak{F}$ s.t.\ $e_1(\phi,w)<1$ or (2) there is $e_2$ on $\mathfrak{F}$ s.t.\ $e_2(\phi,w)>0$. In the second case, the statement holds immediately by Lemmas~\ref{lemma:negNNF} and~\ref{lemma:partialembedding} since $\KbiG$-valuations are defined in the same way as $e_1$-valuations in $\KGsquare$. Namely, in this case, we will have that there is a valuation $e'_1$~on $\mathfrak{F}$ s.t.\ $e^\partial_1(\phi^\partial,w)>0$, whence, $e^\partial_1({\sim}\phi^\partial,w)=0$.

In the first case, we proceed as follows. First, transform $\phi$ into $\phi^*$ (we can do this because of Lemma~\ref{lemma:negNNF}). For a valuation $e_1$ on $\mathfrak{F}$, we define $e^\partial_2$ as expected: $e^\partial_2(p,w)=e_1(p,w)$ and $e^\partial_2(p^*,w)=e_1(p,w)$. Now, we prove by induction that $e_1(\phi^*,w)=e^\partial_2((\phi^*)^\partial,w)$.

This holds for the propositional variables by construction. The cases of propositional connectives are also simple (we consider $\phi^*={\sim}\chi$).
\begin{align*}
e_1({\sim}\chi,w)&=e_1(\chi,w)\rightarrow_\mathsf{G}0\\
&=e^\partial_2(\chi^\partial,w)\rightarrow_\mathsf{G}0\tag{by IH}\\
&=e^\partial_2(\mathbf{1}\coimplies\chi^\partial,w)\\
&=e^\partial_2(({\sim}\chi)^\partial,w)\tag{by Definition~\ref{def:translations}}
\end{align*}
Since the cases of modalities are similar, we consider only $\phi^*=\lozenge\chi$.
\begin{align*}
e_1(\lozenge\chi,w)&=\sup\limits_{w'\in W}\{wR^+w'\wedge_\mathsf{G}e_1(\chi,w)\}\\
&=\sup\limits_{w'\in W}\{wR^+w'\wedge_\mathsf{G}e_2(\chi^\partial,w)\}\tag{by IH}\\
&=e_2(\lineBox\chi^\partial,w)\\
&=e_2((\lozenge\chi)^\partial,w)\tag{by Definition~\ref{def:translations}}
\end{align*}

For the converse, let $\mathfrak{F},w\not\models_{\KGsquare}{\sim}\phi^\partial$. Then we have that $\mathfrak{F},w\not\models{\sim}({\sim}\phi^\partial)^\partial$ by the previous part. I.e., $\mathfrak{F},w\not\models_{\KGsquare}{\sim}(\mathbf{1}\coimplies\phi^\partial)^\partial$, whence, $\mathfrak{F},w\not\models_{\KGsquare}{\sim}((\phi^\partial)^\partial\rightarrow\mathbf{0})$. Now observe from Definition~\ref{def:translations} that $(\psi^\partial)^\partial=\psi$ for every $\psi$. Hence, $\mathfrak{F},w\not\models_{\KGsquare}{\sim\sim}\phi^*$, from which we have $\mathfrak{F},w\not\models_{\KGsquare}\phi^*$. Now, by Lemma~\ref{lemma:negNNF}, we obtain $\mathfrak{F},w\not\models_{\KGsquare}\phi$, as required.

Part 2.\ can be proved similarly but we would need to use Lemmas~\ref{lemma:negNNF} and~\ref{lemma:proptoembedding}.
\end{proof}

We can now establish the embedding results.
\begin{theorem}\label{theorem:validityembedding}~
\begin{enumerate}
\item Let $\mathfrak{F}\!=\!\langle W,R^+,R^-\rangle$ be a bi-relational frame and $w\in\mathfrak{F}$. Then for all $\phi\in\linebimodalLtrianglesquare$, it holds that
\begin{align*}
\mathfrak{F},w\models_{\fuzzybirelKGsquare}\!\phi&\text{ iff }\mathfrak{F},w\models_{\fuzzyKbiG(2)}\!\phi^*\!\wedge\!{\sim}\phi^\partial
\end{align*}
where $\heartsuit_1$'s are associated to $R^+$ and $\heartsuit_2$'s to $R^-$ ($\heartsuit\in\{\Box,\lozenge\}$).
\item Let $\mathfrak{F}\!=\!\langle W,R^+,R^-\rangle$ be a bi-relational frame and $w\in\mathfrak{F}$. Then for all $\phi\in\lineinfobimodalLtrianglesquare$, it holds that
\begin{align*}
\mathfrak{F},w\models_{\fuzzyinfoGsquare}\!\phi&\text{ iff }\mathfrak{F},w\models_{\fuzzyKbiG(2)}\!(\phi^*)^\circ\!\wedge\!{\sim}\phi^\partial
\end{align*}
where $\heartsuit_1$'s are associated to $R^+$ and $\heartsuit_2$'s to $R^-$ ($\heartsuit\in\{\Box,\lozenge\}$).
\end{enumerate}
\end{theorem}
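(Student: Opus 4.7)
The plan is to treat the two parts by a single argument, with part~2 routed through Convention~\ref{conv:circtranslation}'s $^\circ$ and Lemma~\ref{lemma:proptoembedding} in place of Lemma~\ref{lemma:partialembedding}. For part~1, I would first use the $\neg$-NNF rewrites in Remark~\ref{rem:NNFs} to reduce to the case where $\phi$ is in $\neg$-NNF (so that $\phi^*$ and $\phi^\partial$ are defined in the first place) and then invoke Lemma~\ref{lemma:negNNF} to replace $\phi$ by $\phi^*$ on the paraconsistent side of the biconditional. Unpacking strong validity via Definition~\ref{def:paraconsistentsemantics}, what remains is to show that the conjunction of ``$e_1(\phi^*,w)=1$ for every $e_1$ on $\mathfrak{F}$'' and ``$e_2(\phi^*,w)=0$ for every $e_2$ on $\mathfrak{F}$'' is equivalent to ``$e(\phi^*,w)=1$ and $e(\phi^\partial,w)=0$ for every $\fuzzyKbiG(2)$-valuation $e$'' with $\heartsuit_1$ read along $R^+$ and $\heartsuit_2$ along $R^-$.

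For the positive-support conjunct, I would observe that since $\phi^*$ contains no $\neg$, every $e_1$-clause of Definition~\ref{def:paraconsistentsemantics} coincides syntactically with the matching $\KbiG$-clause of Definition~\ref{def:KbiGsemantics} once one reads $\Box,\lozenge$ via $R^+=R_1$ and $\lineBox,\linelozenge$ via $R^-=R_2$. Consequently, the identification $e\leftrightarrow e_1$ given by $e_1(p,u)=e(p,u)$ and $e_1(p^*,u)=e(p^*,u)$ is a value-preserving bijection between $\fuzzyKbiG(2)$-valuations on $\{p,p^*,\ldots\}$ and $\KGsquare$ $e_1$-valuations on the same set of variables, so ``$e_1(\phi^*,w)=1$ for every $e_1$'' is literally the same statement as ``$e(\phi^*,w)=1$ for every $e$''.

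For the negative-support conjunct, Lemma~\ref{lemma:partialembedding} supplies, for every $e_2$, a $\fuzzyKbiG(2)$-valuation $e^\partial$ with $e^\partial(p,u)=e_2(p^*,u)$, $e^\partial(p^*,u)=e_2(p,u)$, and $e^\partial(\phi^\partial,w)=e_2(\phi^*,w)$. Because this swap is again a bijection, ``$e_2(\phi^*,w)=0$ for every $e_2$'' reformulates as ``$e(\phi^\partial,w)=0$ for every $e$'', i.e.\ ``$e({\sim}\phi^\partial,w)=1$ for every $e$''. The point that makes the two bijections compatible is precisely that $\phi^*$ is $\neg$-free, so $e_1(\phi^*,w)$ depends only on the $e_1$-part of a $\KGsquare$-valuation and $e_2(\phi^*,w)$ only on the $e_2$-part; the two independent universal quantifications over $e$ therefore merge into a single one, yielding validity of $\phi^*\wedge{\sim}\phi^\partial$.

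Part~2 requires two adjustments. On the positive-support side, the informational modalities evaluate their $e_1$-clause identically to the corresponding standard modalities on the same relations, which is exactly what the $^\circ$ transformation encodes, so $(\phi^*)^\circ$ replaces $\phi^*$ in the foregoing argument without altering the value at $w$. On the negative-support side, Lemma~\ref{lemma:proptoembedding} substitutes for Lemma~\ref{lemma:partialembedding}, and the asymmetric subscripting prescribed in Definition~\ref{def:translations} --- $(\blacksquare\chi)^\partial=\Box_2\chi^\partial$ rather than $\lozenge_2\chi^\partial$, and symmetrically for $\blacklozenge,\lineblacksquare,\lineblacklozenge$ --- is already designed to match the fact that $\blacksquare$ uses $R^-$ in infimum rather than supremum form on the falsity side. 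The main bookkeeping obstacle, then, is simply to confirm that these subscript choices really do produce the $e_2$-semantics of the informational modalities in $\infoGsquare$, which is the substance of Lemma~\ref{lemma:proptoembedding}; beyond that verification, nothing further is required.
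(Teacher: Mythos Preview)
Your proposal is correct and follows essentially the same approach as the paper: reduce to $\neg$-NNF via Remark~\ref{rem:NNFs}, apply Lemma~\ref{lemma:negNNF}, then split strong validity into the $e_1$- and $e_2$-conjuncts and handle them via the identification of $e_1$-semantics with $\KbiG(2)$-semantics and via Lemma~\ref{lemma:partialembedding} (respectively Lemma~\ref{lemma:proptoembedding} for part~2). Your explicit remark that the $\neg$-freeness of $\phi^*$ decouples the $e_1$- and $e_2$-quantifications, allowing them to be merged into a single universal over $\KbiG(2)$-valuations, is a point the paper leaves implicit but is indeed what makes the final step go through.
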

\begin{proof}
We begin with 1. One can see that the transformations in Remark~\ref{rem:NNFs} are equivalent. Furthermore, from Lemma~\ref{lemma:negNNF}, we have that $\phi^*$ is $\KGsquare$-valid on a given frame $\mathfrak{F}$ iff $\phi$ is $\KGsquare$-valid on~$\mathfrak{F}$.

Now, it is clear that $\phi$ is \emph{$e_1$-valid} on $\langle\mathfrak{F},w\rangle$ iff $\phi^*$ is $\KbiG$-valid on $\langle\mathfrak{F},w\rangle$. Since $\phi^*$ is \emph{strongly valid} on $\langle\mathfrak{F},w\rangle$, it is $e_1$-valid on $\langle\mathfrak{F},w\rangle$ as well. Likewise, using Lemma~\ref{lemma:partialembedding}, we obtain that $\phi$ is $e_2$-valid on $\langle\mathfrak{F},w\rangle$ iff ${\sim}\phi^\partial$ is $\KbiG$-valid on $\langle\mathfrak{F},w\rangle$. Namely, $\phi$ is $e_2$-valid iff $\phi^*$ is $e_2$-valid (Lemma~\ref{lemma:negNNF}). Assume that $\phi^*$ is \emph{not} $e_2$-valid on $\langle\mathfrak{F},w\rangle$. By Lemma~\ref{lemma:partialembedding}, it is immediate that $e^\partial(\phi^\partial)>0$, whence, ${\sim}\phi^\partial$ is not $\KbiG$-valid on $\langle\mathfrak{F},w\rangle$. For the converse, let $e'({\sim}\phi,w)<1$ (whence, $e'(\phi,w)>0$) for some $\KbiG$ valuation $e'$ on $\mathfrak{F}$. Pick a $\KGsquare$ valuation $e'_2$ s.t.\ $e'^\partial=e'_2$. We have that $e'_2(\phi^*,w)>0$ by Lemma~\ref{lemma:partialembedding}. Hence, $\phi$ is not $e_2$-valid by Lemma~\ref{lemma:negNNF}.

Since $\phi$ is strongly $\KGsquare$-valid on $\langle\mathfrak{F},w\rangle$ iff it is both $e_1$- and $e_2$-valid thereon, the result follows.

The proof of 2.\ is the same as that of 1.\ but instead of Lemma~\ref{lemma:partialembedding}, we use Lemma~\ref{lemma:proptoembedding}.
\end{proof}
\begin{remark}
Note from Theorem~\ref{theorem:validityembedding} that if $\phi$ is $\neg$-free (i.e., $\phi^*=\phi$), we have that $\mathfrak{F}\models_{\KGsquare}\phi$ iff $\mathfrak{F}\models_{\KbiG}\phi\wedge{\sim}\phi^\partial$ and $\mathfrak{F}\models_{\infoGsquare}\phi$ iff $\mathfrak{F}\models_{\KbiG}\phi\wedge{\sim}\phi^\partial$. This means that $\phi$ defines the same class of frames $\mathbb{K}$ in $\KbiG$ and $\KGsquare$ ($\infoGsquare$, respectively) iff $\mathbb{K}\models_{\KbiG}\triangle\phi\leftrightarrow{\sim}\phi^\partial$ ($\mathbb{K}\models_{\KbiG}\triangle\phi\leftrightarrow{\sim}\phi^\partial$).
\end{remark}

We can also establish the embeddings of $\KGsquare$ and $\infoGsquare$ entailments into $\KbiG$ entailment.
\begin{theorem}\label{theorem:entailmentsembedding}~
\begin{enumerate}
\item Let $\Gamma\cup\{\phi\}\subseteq\linebimodalLtrianglesquare$. Then $\Gamma\models_{\KGsquare}\phi$ iff $\{\psi^*:\psi\in\Gamma\}\models_{\KbiG(2)}\phi^*$ and there exists a finite set $\Gamma^\partial\subseteq\{\psi^\partial:\psi\in\Gamma\}$ s.t.\ $\phi^\partial\models_{\KbiG(2)}\bigvee\limits_{\psi^\partial\in\Gamma^\partial}\psi^\partial$.
\item Let $\Gamma\cup\{\phi\}\subseteq\lineinfobimodalLtrianglesquare$. Then $\Gamma\models_{\infoGsquare}\phi$ iff $\{\psi^*:\psi\in\Gamma\}\models_{\KbiG(2)}\phi^*$ and there exists a finite set $\Gamma^\partial\subseteq\{\psi^\partial:\psi\in\Gamma\}$ s.t.\ $\phi^\partial\models_{\KbiG(2)}\bigvee\limits_{\psi^\partial\in\Gamma^\partial}\psi^\partial$.
\end{enumerate}
\end{theorem}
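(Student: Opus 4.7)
The plan is to reduce the two-valuation $\KGsquare$-entailment to two independent $\KbiG(2)$ conditions, one coming from the support of truth and the other from the support of falsity, and handle the translations $^*$ and $^\partial$ separately; part 2 will then be a verbatim copy with Lemma~\ref{lemma:proptoembedding} replacing Lemma~\ref{lemma:partialembedding} and $^\circ$ inserted to move $\infoGsquare$-modalities into the $\KbiG(2)$ signature. First I would unfold $\Gamma\models_{\KGsquare}\phi$ into the two Definition~\ref{def:paraconsistentsemantics} clauses: for every bi-relational frame $\mathfrak{F}$, every pair $(e_1,e_2)$ on it and every $w$,
\begin{align*}
\inf\{e_1(\psi,w):\psi\in\Gamma\}\leq e_1(\phi,w) \qquad\text{and}\qquad \sup\{e_2(\psi,w):\psi\in\Gamma\}\geq e_2(\phi,w).
\end{align*}

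The support-of-truth clause is handled essentially for free. By Lemma~\ref{lemma:negNNF} (applied to each formula and rerouted through a fresh $p^*$ for every $\neg p$-literal), $e_1(\psi,w)=e'_1(\psi^*,w)$ for a bijectively corresponding valuation $e'_1$. But $e'_1$ satisfies exactly the $\KbiG(2)$ recursion once we identify $\Box,\lozenge$ (via $R^+$) with $\Box_1,\lozenge_1$ and $\lineBox,\linelozenge$ (via $R^-$) with $\Box_2,\lozenge_2$, so the first clause is literally $\{\psi^*:\psi\in\Gamma\}\models_{\KbiG(2)}\phi^*$. For the support-of-falsity clause, I would chain Lemma~\ref{lemma:negNNF} with Lemma~\ref{lemma:partialembedding} to obtain, for the matching $\KbiG(2)$ valuation $e^\partial$, the identity $e_2(\psi,w)=e^\partial(\psi^\partial,w)$. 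Since the correspondence between $e_2$-valuations of $\bimodalLtrianglesquare$ on a fixed bi-relational frame and $\KbiG(2)$-valuations on the same frame is a bijection once enough fresh propositional symbols are available, the second clause reduces to
\begin{align*}
(\dagger)\qquad \sup\{e(\psi^\partial,w):\psi\in\Gamma\}\geq e(\phi^\partial,w)\ \text{for every $\KbiG(2)$-model and every $w$.}
\end{align*}
The ``if'' direction of the equivalence between $(\dagger)$ and the existence of a finite $\Gamma^\partial$ with $\phi^\partial\models_{\KbiG(2)}\bigvee\Gamma^\partial$ is immediate from the standard inequality $\max\leq\sup$.

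The main obstacle is the ``only if'' direction: from $(\dagger)$ producing a \emph{finite} $\Gamma^\partial$. I would argue by contradiction using a first-order compactness argument patterned on the proof of Theorem~\ref{theorem:KbiGstrongcompleteness}. Assuming no finite $\Gamma^\partial$ works, for each finite $\Gamma'\subseteq\Gamma$ there is a $\KbiG(2)$-model and a world $w$ with $e(\phi^\partial,w)>\max\{e(\psi^\partial,w):\psi\in\Gamma'\}$, so some rational separates them. I would then enrich the first-order signature used in Theorem~\ref{theorem:KbiGstrongcompleteness} with a fresh constant $q$ and axioms $P(q)$, $W(c)$, $f_{\phi^\partial}(c)>q$ together with $f_{\psi^\partial}(c)<q$ for every $\psi\in\Gamma$. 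Every finite subtheory mentions only a finite $\Gamma'\subseteq\Gamma$ and is satisfied by the previous model, with $q$ picked in the rational gap. Compactness plus the Löwenheim--Skolem step of Theorem~\ref{theorem:KbiGstrongcompleteness} then yields a countable $\KbiG(2)$-model on a bi-relational fuzzy frame in which $\sup\{e(\psi^\partial,w):\psi\in\Gamma\}\leq q<e(\phi^\partial,w)$, contradicting $(\dagger)$. Combining the two clauses establishes part 1; part 2 is identical after substituting Lemma~\ref{lemma:proptoembedding}, replacing $\phi^*$ by $(\phi^*)^\circ$ in the first clause so that the support-of-truth translation lands in $\bimodalLtriangle(2)$, and noting that the clauses of $\blacksquare,\blacklozenge,\lineblacksquare,\lineblacklozenge$ agree with those of $\Box,\lozenge,\lineBox,\linelozenge$ on $e_1$.
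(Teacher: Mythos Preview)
Your proposal is correct and follows the same route as the paper: split the $\KGsquare$-entailment into its $e_1$- and $e_2$-components via Lemmas~\ref{lemma:negNNF} and~\ref{lemma:partialembedding}, identify the first with $\{\psi^*\}\models_{\KbiG(2)}\phi^*$ and the second with the sup-condition $(\dagger)$, and then extract a finite $\Gamma^\partial$ from $(\dagger)$ by compactness. The only difference is that the paper dispatches the compactness step in one line by citing strong completeness of $\fuzzyKbiG$ (Theorem~\ref{theorem:KbiGstrongcompleteness}), whereas you spell out the underlying first-order construction explicitly; your version is more detailed but amounts to the same argument.
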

\begin{proof}
Consider 1., let $\Gamma\cup\{\phi\}\subseteq\bimodalLtrianglesquare$, and assume that $\Gamma\models_{\fuzzybirelKGsquare}\phi$. By Definition~\ref{def:paraconsistentsemantics}, it means that $\inf\{e_1(\psi,w):\psi\in\Gamma\}\leq e_1(\phi,w)$ and $\sup\{e_2(\psi,w):\psi\in\Gamma\}\geq e_2(\phi,w)$ for every $\fuzzybirelKGsquare$ model $\mathfrak{M}=\langle W,R^+,R^-,e_1,e_2\rangle$ and every $w\in W$. Now, by Lemma~\ref{lemma:negNNF}, we have that $\inf\{e(\psi^*,w):\psi\in\Gamma\}\leq e(\phi^*,w)$ for every $\fuzzyKbiG(2)$ model $\mathfrak{M}=\langle W,R^+,R^-,e\rangle$ and every $w\in W$, i.e., $\{\psi^*:\psi\in\Gamma\}\models_{\KbiG(2)}\phi^*$. Furthermore, by Lemma~\ref{lemma:partialembedding}, we have that $\sup\{e(\psi^\partial,w):\psi\in\Gamma\}\geq e(\phi^\partial,w)$ for every $\fuzzyKbiG(2)$ model $\mathfrak{M}=\langle W,R^+,R^-,e\rangle$ and every $w\in W$. Note, however, that since $\fuzzyKbiG$ is strongly complete (Theorem~\ref{theorem:KbiGstrongcompleteness}), it is also compact. Thus, there must exist some finite $\Gamma^\partial\subseteq\{\psi^\partial:\psi\in\Gamma\}$ s.t.\ $\phi^\partial\models_{\KbiG(2)}\bigvee\limits_{\psi^\partial\in\Gamma^\partial}\psi^\partial$, as required.

For the converse, let $\Gamma\not\models_{\fuzzybirelKGsquare}\phi$. Then, there are some $\fuzzybirelKGsquare$ model $\mathfrak{M}=\langle W,R^+,R^-,e_1,e_2\rangle$ and $w\in W$ s.t.\ (1) $\inf\{e_1(\psi,w):\psi\in\Gamma\}>e_1(\phi,w)$ or (2) $\sup\{e_2(\psi,w):\psi\in\Gamma\}<e_2(\phi,w)$. It is clear from Lemmas~\ref{lemma:negNNF} and~\ref{lemma:partialembedding} that $\{\psi^*:\psi\in\Gamma\}\not\models_{\KbiG(2)}\phi^*$ in the first case, and that for any finite $\Gamma^\partial\subseteq\{\psi^\partial:\psi\in\Gamma\}$, we have $\phi^\partial\not\models_{\KbiG(2)}\bigvee\limits_{\psi^\partial\in\Gamma^\partial}\psi^\partial$ in the second case.

Part 2.\ can be dealt with similarly, using Lemma~\ref{lemma:proptoembedding} instead of Lemma~\ref{lemma:partialembedding}.
\end{proof}
\subsection{Frame definability}
In this section, we show further results concerning the definability of frames in $\birelKGsquare$. We begin with the characterisation of $\KGsquare$- and $\infoGsquare$-definable classes of frames.
\begin{definition}\label{def:framedefinability}
Let $\mathbb{S}$ be a class of frames, $\mathbf{L}$ be a logic and $\mathcal{L}_\mathbf{L}$ its language. We say that $\phi\in\mathcal{L}_\mathbf{L}$ \emph{defines $\mathbb{S}$ in $\mathbf{L}$} iff for every frame $\mathfrak{F}$, it holds that
\begin{align*}
\mathfrak{F}\in\mathbb{S}&\text{ iff }\mathfrak{F}\models_\mathbf{L}\phi
\end{align*}
A class of frames is \emph{definable in $\mathbf{L}$} iff there is an~$\mathcal{L}_\mathbf{L}$ formula that defines it.
\end{definition}
\begin{corollary}\label{cor:definabilitycharacterisation}~
\begin{enumerate}
\item All $\KGsquare$- and $\infoGsquare$-definable classes of frames are $\KbiG$-definable.
\item A class of mono- or bi-relational frames $\mathbb{S}$ is $\KGsquare$-definable iff there is $\phi\in\bimodalLtriangle(2)$ s.t.\ $\phi\wedge{\sim}\phi^\partial$ defines $\mathbb{S}$ in $\KbiG$.
\item A class of mono- or bi-relational frames $\mathbb{S}$ is $\infoGsquare$-definable iff there is $\phi\in\bimodalLtriangle(2)$ s.t.\ $\phi\wedge{\sim}\phi^\partial$ defines $\mathbb{S}$ in $\KbiG$.
\end{enumerate}
\end{corollary}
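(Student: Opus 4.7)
The plan is to observe that all three parts of Corollary~\ref{cor:definabilitycharacterisation} are direct consequences of Theorem~\ref{theorem:validityembedding}; the only nontrivial work is bookkeeping the identification between the modalities of $\bimodalLtriangle(2)$ and those of $\linebimodalLtrianglesquare$ or $\lineinfobimodalLtrianglesquare$.

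For Part~1, I would take an arbitrary defining formula $\phi$ of $\mathbb{S}$ in $\KGsquare$ (the $\infoGsquare$ case being entirely analogous). By Theorem~\ref{theorem:validityembedding}(1), for every pointed frame $\langle\mathfrak{F},w\rangle$ we have $\mathfrak{F},w\models_{\KGsquare}\phi$ iff $\mathfrak{F},w\models_{\KbiG(2)}\phi^*\wedge{\sim}\phi^\partial$. Since $\phi^*\wedge{\sim}\phi^\partial\in\bimodalLtriangle(2)$ and $\phi$ defines $\mathbb{S}$, this already witnesses that $\mathbb{S}$ is $\KbiG$-definable.

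For Parts~2 and~3, I would prove both directions from Theorem~\ref{theorem:validityembedding}. For $\Rightarrow$, starting from a $\KGsquare$ (resp.\ $\infoGsquare$) defining formula $\chi$, set $\phi=\chi^*$ (resp.\ $\phi=(\chi^*)^\circ$), which lives in $\bimodalLtriangle(2)$ because it is $\neg$-free; Theorem~\ref{theorem:validityembedding} then yields that $\phi\wedge{\sim}\phi^\partial$ defines $\mathbb{S}$ in $\KbiG$. For $\Leftarrow$, given $\phi\in\bimodalLtriangle(2)$ such that $\phi\wedge{\sim}\phi^\partial$ defines $\mathbb{S}$ in $\KbiG$, view $\phi$ as a $\neg$-free formula in $\linebimodalLtrianglesquare$ via the identification $\Box_1\leftrightarrow\Box$, $\lozenge_1\leftrightarrow\lozenge$, $\Box_2\leftrightarrow\lineBox$, $\lozenge_2\leftrightarrow\linelozenge$ for Part~2, and as $\phi^{+\bullet}\in\lineinfobimodalLtrianglesquare$ via $\Box_1\leftrightarrow\blacksquare$, $\lozenge_1\leftrightarrow\blacklozenge$, $\Box_2\leftrightarrow\lineblacksquare$, $\lozenge_2\leftrightarrow\lineblacklozenge$ for Part~3. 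Under either identification, $\phi^*=\phi$, so applying Theorem~\ref{theorem:validityembedding} in reverse gives that $\phi$ (resp.\ $\phi^{+\bullet}$) defines $\mathbb{S}$ in $\KGsquare$ (resp.\ $\infoGsquare$).

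The main obstacle is exactly this bookkeeping: the symbol $\phi^\partial$ in Parts~2 and~3 must be interpreted under the identification appropriate to the target logic, and for Part~3 one has to check that $^\circ$ and $^{+\bullet}$ commute correctly with $^\partial$, so that the formula $\phi\wedge{\sim}\phi^\partial$ coincides with the translation $(\chi^*)^\circ\wedge{\sim}\chi^\partial$ produced by Theorem~\ref{theorem:validityembedding}(2). Once the two identifications are fixed, no new semantic content beyond Theorem~\ref{theorem:validityembedding} is required and the proof is a routine unpacking of definitions.
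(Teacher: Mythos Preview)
Your proposal is correct and follows essentially the same route as the paper: both derive all three parts directly from Theorem~\ref{theorem:validityembedding}, with the forward direction of Parts~2--3 taking $\phi=\chi^*$ (resp.\ $\phi=(\chi^*)^\circ$) and the backward direction using $\phi^*=\phi$ for $\neg$-free $\phi$. You are simply more explicit than the paper about the identification of $\bimodalLtriangle(2)$-modalities with those of $\linebimodalLtrianglesquare$ vs.\ $\lineinfobimodalLtrianglesquare$ (the paper dismisses Part~3 as ``similar''), and you correctly omit the paper's superfluous citation of Theorem~\ref{theorem:entailmentsembedding} for Part~1, since frame definability concerns only validity.
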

\begin{proof}
1.\ follows immediately from Theorems~\ref{theorem:validityembedding} and~\ref{theorem:entailmentsembedding}.

Let us now prove 2.\ as 3.\ can be shown similarly. Assume that $\chi$ defines $\mathbb{S}$ in $\KGsquare$. By Theorem~\ref{theorem:validityembedding}, it follows that $\mathbb{S}\models_{\KbiG}\chi^*\wedge{\sim}\chi^\partial$ and $\mathfrak{F}\not\models_{\KbiG}\chi^*\wedge{\sim}\chi^\partial$ for every $\mathfrak{F}\notin\mathbb{S}$. For the converse, assume that $\phi\wedge{\sim}\phi^\partial$ defines $\mathbb{S}$ in $\KbiG$. Since $\phi$ does not contain $\neg$, $\phi^*=\phi$. Thus, again, by Theorem~\ref{theorem:validityembedding}, we have that $\phi$ defines $\mathbb{S}$ in $\KGsquare$.
\end{proof}

In the remainder of the section, we are going to be concerned with $\KGsquare$ since informational modalities are non-standard. First of all, we can see that \emph{mono-relational} frames are $\KGsquare$-definable\footnote{The $\infoGsquare$-definability of mono-relational frames can be found in~\cite{BilkovaFrittellaKozhemiachenko2023nonstandard}.} in the expected manner.
\begin{proposition}\label{prop:birelKGsquare1relationdefinable}
Let $\mathfrak{F}=\langle W,R^+,R^-\rangle$. Then $\mathfrak{F}\models\Box p\leftrightarrow\neg\lozenge\neg p$ iff $R^+=R^-$.
\end{proposition}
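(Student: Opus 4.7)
The plan is to unfold the two sides using Definition~\ref{def:paraconsistentsemantics} together with the clauses $e_i(\neg\phi,w)=e_{3-i}(\phi,w)$, obtaining
\begin{align*}
e_1(\Box p,w)&=\inf_{w'\in W}\{wR^+w'\rightarrow_\mathsf{G}e_1(p,w')\},&
e_2(\Box p,w)&=\sup_{w'\in W}\{wR^-w'\wedge_\mathsf{G}e_2(p,w')\},\\
e_1(\neg\lozenge\neg p,w)&=\inf_{w'\in W}\{wR^-w'\rightarrow_\mathsf{G}e_1(p,w')\},&
e_2(\neg\lozenge\neg p,w)&=\sup_{w'\in W}\{wR^+w'\wedge_\mathsf{G}e_2(p,w')\}.
\end{align*}
Strong validity of $\Box p\leftrightarrow\neg\lozenge\neg p$ at $w$ is equivalent to $e_1$ of both sides agreeing and $e_2$ of both sides agreeing for every valuation, so the claim reduces to showing this symmetric pairing of infima/suprema forces $R^+=R^-$.

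The $(\Leftarrow)$ direction is immediate: if $R^+=R^-$, the four displayed expressions pair up syntactically, so the biconditional is strongly valid on $\mathfrak{F}$.

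For $(\Rightarrow)$ I would argue by contraposition. Suppose $R^+\neq R^-$ and pick $w,w'$ with $a:=wR^+w'\neq wR^-w'=:b$; without loss of generality $a<b$ (the other case is symmetric by swapping $R^+$ and $R^-$). Choose $c\in[a,b)$, for instance $c=a$, and define a valuation with $e_1(p,w')=c$ and $e_1(p,u)=1$ for every $u\neq w'$ (the value of $e_2$ is irrelevant). Since $a\leq c$, the summand at $u=w'$ of $e_1(\Box p,w)$ is $a\rightarrow_\mathsf{G}c=1$, and every other summand is $1$, so $e_1(\Box p,w)=1$. On the other hand the summand at $u=w'$ of $e_1(\neg\lozenge\neg p,w)$ is $b\rightarrow_\mathsf{G}c=c$ because $b>c$, hence $e_1(\neg\lozenge\neg p,w)\leq c<1$ (the strict inequality $c<1$ is automatic because $c=a<b\leq1$). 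Thus $e_1(\Box p\leftrightarrow\neg\lozenge\neg p,w)<1$, so the biconditional fails strong validity.

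The only potential obstacle is bookkeeping rather than conceptual: one must ensure the counterexample valuation is well-defined when $W$ is small (e.g.\ when $w=w'$) and check that the interval $[a,b)$ always contains a value strictly below $1$, but $c=a<b\leq1$ settles this uniformly. A symmetric counterexample at the falsity level (setting $e_2(p,w')=1$ and $e_2(p,u)=0$ elsewhere, which yields $e_2(\Box p,w)=b\neq a=e_2(\neg\lozenge\neg p,w)$) is available as well, though a single failure of $e_1$-validity is already enough to refute strong validity.
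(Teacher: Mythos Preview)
Your proof is correct and follows essentially the same route as the paper: unfold the semantics to see that $e_1(\neg\lozenge\neg p,w)$ and $e_2(\neg\lozenge\neg p,w)$ are the $R^-$- and $R^+$-versions of $e_1(\Box p,w)$ and $e_2(\Box p,w)$, then for the converse pick $w,w'$ witnessing $R^+\neq R^-$ and build a valuation concentrated at $w'$. The paper takes the w.l.o.g.\ case $wR^+w'>wR^-w'$ and sets $e(p,w')=(wR^+w',wR^-w')$, whereas your choice $e_1(p,w')=a=wR^+w'$ (with $a<b$) is a cleaner variant of the same idea and your verification of the values is carried out more carefully.
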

\begin{proof}
Let $R^+=R^-=R$, we have
\begin{align*}
e_1(\neg\lozenge\neg p,w)&=e_2(\lozenge\neg p,w)\\
&=\inf\limits_{w'\in W}\{wRw'\rightarrow_\mathsf{G}e_2(\neg p,w')\}\\
&=\inf\limits_{w'\in W}\{wRw'\rightarrow_\mathsf{G}e_1(p,w')\}\\
&=e_1(\Box p,w)
\end{align*}
$e_2$ can be tackled similarly.

Now let $R^+\neq R^-$, i.e., $wR^+w'=x$ and $wR^-w'=y$ for some $w,w'\in\mathfrak{F}$, and assume w.l.o.g.\ that $x>y$. We define the values of $p$ as follows: $e(p,w'')=(1,0)$ for all $w''\neq w'$ and $e(p,w')=(x,y)$. It is clear that $e(\Box p,w)=(1,0)$ but $e(\neg\lozenge\neg p,w)=(y,x)\neq(1,0)$, as required.
\end{proof}
\begin{definition}[$\pm$-counterparts of frames]\label{def:framecounterparts}
Let $\mathbb{K}$ be a class of crisp \emph{mono-relational} frames.
\begin{enumerate}
\item A class $\mathbb{K}^+$ of \emph{bi-relational} crisp frames is the \emph{$+$-counterpart} (‘plus-counterpart’) of $\mathbb{K}$ iff for every bi-relational frame $\langle U,S,R^-\rangle$, it holds that
\begin{align*}
\langle U,S,R^-\rangle\in\mathbb{K}^+&\text{ iff }\langle U,S\rangle\in\mathbb{K}
\end{align*}
\item A class $\mathbb{K}^-$ of \emph{bi-relational} crisp frames is the \emph{$-$-counterpart} (‘minus-counterpart’) of $\mathbb{K}$ iff for every bi-relational frame $\langle U,R^+,S\rangle$, it holds that
\begin{align*}
\langle U,R^+,S\rangle\in\mathbb{K}^-&\text{ iff }\langle U,S\rangle\in\mathbb{K}
\end{align*}
\item A class $\mathbb{K}^\pm$ of \emph{bi-relational} crisp frames is the \emph{$\pm$-counterpart} (‘plus-minus-counterpart’) of $\mathbb{K}$ iff for every bi-relational frame $\langle W,R^+,R^-\rangle$, it holds that
\begin{align*}
\langle W,R^+,R^-\rangle\in\mathbb{K}^\pm&\text{ iff }\left[\begin{matrix}\langle W,R^+\rangle\in\mathbb{K}\\\text{ and }\\\langle W,R^-\rangle\in\mathbb{K}\end{matrix}\right]
\end{align*}
\end{enumerate}
\end{definition}
\begin{corollary}\label{cor:definabilitypreservation}
Let $\phi$ be a $\neg$-free formula that defines a class of frames $\mathfrak{F}=\langle W,R\rangle$, $\mathbb{K}$ in $\crispKbiG$ and let $\mathbb{K}^\pm$ be the $\pm$-counterpart of $\mathbb{K}$. Then $\phi$ defines $\mathbb{K}^\pm$ in $\crispbirelKGsquare$.
\end{corollary}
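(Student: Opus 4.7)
The plan is to decompose $\crispbirelKGsquare$-validity of $\phi$ on a frame $\mathfrak{F} = \langle W, R^+, R^- \rangle$ into its $e_1$- and $e_2$-components and identify each of these with $\crispKbiG$-validity of $\phi$ on one of the two mono-relational projections $\langle W, R^+ \rangle$ and $\langle W, R^- \rangle$. Combining the two identifications with Definition~\ref{def:framecounterparts} will immediately yield the claim.

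First I would handle the $e_1$-part by inspection of Definition~\ref{def:paraconsistentsemantics}: because $\phi$ is $\neg$-free, none of the clauses defining $e_1(\phi,w)$ ever invokes $e_2$ or $R^-$, so $e_1(\phi,w)$ depends solely on $e_1$ and $R^+$. The $e_1$-valuations on $\mathfrak{F}$ are therefore in natural bijection with $\crispKbiG$-valuations on $\langle W, R^+ \rangle$ (via restriction to $e_1$). Thus $\phi$ is $e_1$-valid on $\mathfrak{F}$ iff $\phi$ is $\crispKbiG$-valid on $\langle W, R^+ \rangle$, which by the hypothesis on $\mathbb{K}$ is equivalent to $\langle W, R^+ \rangle \in \mathbb{K}$.

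For the $e_2$-part I would invoke the conflation of Lemma~\ref{lemma:splitconflation}. Given any valuation $(e_1,e_2)$ on $\mathfrak{F}$, the conflated model $\mathfrak{M}^*$ lives on $\langle W, R^-, R^+ \rangle$ and satisfies $e^*_1(\phi,w) = 1 - e_2(\phi,w)$; moreover, any $\crispKbiG$-valuation $e'$ on $\langle W, R^- \rangle$ is realised as an $e^*_1$ by setting $e_2(p,w) \coloneqq 1 - e'(p,w)$ (and choosing $e_1$ arbitrarily). Since $\phi$ is $\neg$-free, $e^*_1(\phi,w)$ depends only on the first relation of $\mathfrak{M}^*$, namely $R^-$. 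Hence $e_2(\phi,w) = 0$ for all $e_2$ and all $w$ iff $e^*_1(\phi,w) = 1$ for all $\crispKbiG$-valuations on $\langle W, R^- \rangle$, i.e., iff $\langle W, R^- \rangle \models_{\crispKbiG} \phi$, which by hypothesis is $\langle W, R^- \rangle \in \mathbb{K}$.

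Putting both parts together gives $\mathfrak{F} \models_{\crispbirelKGsquare} \phi$ iff $\langle W, R^+ \rangle \in \mathbb{K}$ and $\langle W, R^- \rangle \in \mathbb{K}$, which by Definition~\ref{def:framecounterparts} is exactly $\mathfrak{F} \in \mathbb{K}^\pm$. The only slightly delicate point is the bijection between $e_2$-valuations on $\mathfrak{F}$ (modulo the irrelevant choice of $e_1$) and $\crispKbiG$-valuations on $\langle W, R^- \rangle$ via the conflation; but this is immediate from the formulas $e^*_1(p,w) = 1 - e_2(p,w)$ and the fact that $\phi$ being $\neg$-free makes the other components of the conflated model irrelevant. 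No deeper obstacle is anticipated.
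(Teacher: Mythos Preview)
Your proposal is correct and follows essentially the same approach as the paper: both argue that for $\neg$-free $\phi$, $e_1$-validity on $\langle W,R^+,R^-\rangle$ coincides with $\crispKbiG$-validity on $\langle W,R^+\rangle$ by direct inspection of the semantics, and that $e_2$-validity coincides with $\crispKbiG$-validity on $\langle W,R^-\rangle$ via the conflation of Lemma~\ref{lemma:splitconflation}. The paper merely packages this as a proof by contradiction rather than the direct biconditional you give, but the ingredients and the key lemma are identical.
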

\begin{proof}
Assume that $\phi$ \emph{does not} define $\mathbb{K}^\pm$ in $\crispbirelKGsquare$. Then, either (1) there is $\mathfrak{F}\notin\mathbb{K}^\pm$ s.t.\ $\mathfrak{F}\models_{\KGsquare}\phi$ or (2) $\mathfrak{H}\not\models_{\KGsquare}\phi$ for some $\mathfrak{H}\in\mathbb{K}^\pm$. Since $\phi$ defines $\mathbb{K}$ in $\KbiG$, it is clear that $\mathfrak{F},\mathfrak{H}\in\mathbb{K}^+$. Thus, we need to reason for contradiction in the case when $\mathfrak{F}\notin\mathbb{K}^-$ or $\mathfrak{H}\notin\mathbb{K}^-$. We prove only (1) as (2) can be tackled in a dual manner.

Observe that $e_2(\phi,w)\!=\!0$ for every $w\!\in\!\mathfrak{F}$ and $e_2$ on $\mathfrak{F}\!=\!\langle W,R^+,R^-\rangle$. But then, by Lemma~\ref{lemma:splitconflation}, we have that $e^*_1(\phi,w)=1$ for every $w\in\mathfrak{F}^*$ and $e^*_1$ on $\mathfrak{F}^*=\langle W,R^-,R^+\rangle$\footnote{Note that $R^+$ and $R^-$ are now swapped.}. Thus, since for every $e^*_1$, there is $e_2$ from which it could be obtained, $\phi$ is $\KbiG$-valid on a frame $\langle W,R^-\rangle\notin\mathbb{K}$. Hence, $\phi$~does not define $\mathbb{K}$ in $\KbiG$ either. The result follows.
\end{proof}
\begin{corollary}\label{cor:partialnondefinability}
Let $\mathbb{K}$ be a class of \emph{crisp mono-relational frames} $\langle W,R\rangle$. Let further $\mathbb{K}(2)$ be a class of crisp bi-relational frames s.t.\ exactly one of the following holds:
\begin{enumerate}
\item $\mathbb{K}(2)=\{\langle W,R^+,R^-\rangle:\langle W,R^+\rangle\in\mathbb{K}\text{ and there is some }\langle W,R^-\rangle\notin\mathbb{K}\}$, or
\item $\mathbb{K}(2)=\{\langle W,R^+,R^-\rangle:\langle W,R^-\rangle\in\mathbb{K}\text{ and there is some }\langle W,R^+\rangle\notin\mathbb{K}\}$.
\end{enumerate}
Then, $\mathbb{K}(2)$ is not $\KGsquare$-definable.
\end{corollary}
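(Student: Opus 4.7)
The plan is to argue by contradiction, turning Lemma~\ref{lemma:splitconflation} into the observation that every $\KGsquare$-definable class of bi-relational frames is closed under swapping $R^+$ and $R^-$, whereas $\mathbb{K}(2)$ is not.

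Suppose some $\phi\in\bimodalLtrianglesquare$ defines $\mathbb{K}(2)$ in $\crispbirelKGsquare$. For any crisp bi-relational frame $\mathfrak{F}=\langle W,R^+,R^-\rangle$ write $\mathfrak{F}^\flat=\langle W,R^-,R^+\rangle$. I would first establish that $\mathfrak{F}\models\phi$ iff $\mathfrak{F}^\flat\models\phi$. The model-level operation $\mathfrak{M}\mapsto\mathfrak{M}^*$ of Lemma~\ref{lemma:splitconflation} is a bijection between models on $\mathfrak{F}$ and models on $\mathfrak{F}^\flat$ (it is self-inverse, since $1-(1-x)=x$), and by the lemma $e(\phi,w)=(1,0)$ iff $e^*(\phi,w)=(1-0,1-1)=(1,0)$. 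So strong validity transfers in both directions.

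Next, I would exhibit a witness $\mathfrak{F},\mathfrak{F}^\flat$ that breaks this closure in case~1. Since $\mathbb{K}(2)$ is (implicitly) nonempty, pick $\langle W,R^+,R^-\rangle\in\mathbb{K}(2)$; the side clause then provides a relation $S$ on $W$ with $\langle W,S\rangle\notin\mathbb{K}$. Set $\mathfrak{F}=\langle W,R^+,S\rangle$. We still have $\langle W,R^+\rangle\in\mathbb{K}$, and $S$ itself witnesses the nontriviality clause, so $\mathfrak{F}\in\mathbb{K}(2)$. However, $\mathfrak{F}^\flat=\langle W,S,R^+\rangle\notin\mathbb{K}(2)$, because $\langle W,S\rangle\notin\mathbb{K}$. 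Combined with the swap invariance above, this yields $\mathfrak{F}\models\phi$ yet $\mathfrak{F}^\flat\not\models\phi$, a contradiction. Case~2 is completely symmetric, obtained by interchanging the roles of $R^+$ and $R^-$ throughout.

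The whole argument is a short corollary of Lemma~\ref{lemma:splitconflation}; the only mildly non-routine point is lifting that lemma from the model level to frame-level strong validity using the self-inverse $(\cdot)^*$ bijection. I do not anticipate any genuine obstacle beyond this bookkeeping.
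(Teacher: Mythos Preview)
Your proposal is correct and follows essentially the same route as the paper: assume a defining formula, use Lemma~\ref{lemma:splitconflation} to show that strong $\KGsquare$-validity is preserved under swapping $R^+$ and $R^-$, and then exhibit a frame in $\mathbb{K}(2)$ whose swap falls outside $\mathbb{K}(2)$. The only cosmetic difference is that you make the frame-level swap-invariance and the construction of the witness $\langle W,R^+,S\rangle$ explicit, whereas the paper simply picks $\mathfrak{F}=\langle W,R^+,R^-\rangle\in\mathbb{K}(2)$ with $\langle W,R^-\rangle\notin\mathbb{K}$ directly and invokes Lemma~\ref{lemma:splitconflation} in one step.
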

\begin{proof}
Since both cases can be shown similarly, we prove only 1.\ and reason for the contradiction. Assume that $\phi$ defines $\mathbb{K}(2)$, and let $\mathfrak{F}\in\mathbb{K}(2)$ with $\mathfrak{F}=\langle W,R^+,R^-\rangle$ be s.t.\ $\langle W,R^-\rangle\notin\mathbb{K}$. Now denote $\mathfrak{F}^*=\langle W,R^-,R^+\rangle$. Clearly, $\mathfrak{F}^*\notin\mathbb{K}(2)$. However, by Lemma~\ref{lemma:splitconflation}, we have that $\mathfrak{F}^*\models\phi$, i.e., $\phi$ does not define $\mathbb{K}$. A~con\-t\-ra\-dic\-tion.
\end{proof}

Note, however, that the above statement fails for \emph{fuzzy frames}. Indeed, it is possible to define a~class of frames where only one relation is crisp.
\begin{proposition}\label{prop:crispdefinition}
Let $\mathfrak{F}=\langle W,R^+,R^-\rangle$.
\begin{enumerate}
\item $R^+$ is \emph{crisp} iff $\mathfrak{F}\models\triangle\Box p\rightarrow\Box\triangle p$.
\item $R^-$ is \emph{crisp} iff $\mathfrak{F}\models\lozenge{\sim\sim}p\rightarrow{\sim\sim}\lozenge p$.
\end{enumerate}
\end{proposition}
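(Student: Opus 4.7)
The plan is to show that in each case one of the two support conditions ($e_1$- or $e_2$-validity) holds on every bi-relational frame, while the other is equivalent to the crispness of the designated relation. Strong validity is the conjunction of $e_1$- and $e_2$-validity, so this will yield the characterisation.

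For part 1, I would first verify that $e_2(\triangle\Box p\rightarrow\Box\triangle p,w)=0$ in every model over every frame. Both $e_2(\triangle\Box p,w)$ and $e_2(\Box\triangle p,w)$ take values only in $\{0,1\}$: the first because of the outermost ${\sim_\mathsf{G}\sim_\mathsf{G}}$ arising from $e_2(\triangle\cdot,w)$, the second because $e_2(\Box\triangle p,w)=\sup_{w'}\{wR^-w'\wedge_\mathsf{G}{\sim_\mathsf{G}\sim_\mathsf{G}} e_2(p,w')\}$ has only $0$ or $1$ in the second argument. Unpacking the definitions, both expressions equal $1$ under exactly the same condition: that some $w'$ satisfies $wR^-w'>0$ and $e_2(p,w')>0$. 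Hence $e_2(\Box\triangle p,w)\leq e_2(\triangle\Box p,w)$ always, so the $\coimplies_\mathsf{G}$-based $e_2$-clause of $\rightarrow$ returns $0$. What remains is $e_1$-validity, which uses only $R^+$ and coincides with the ordinary $\KbiG$ semantics over $R^+$; by the cited Proposition~2.10.1 of~\cite{BilkovaFrittellaKozhemiachenko2023IGPL}, this is equivalent to $R^+$ being crisp.

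Part 2 is dual, but the side that trivialises is now $e_1$. One computes $e_1(\lozenge{\sim\sim}p,w)=\sup\{wR^+w':e_1(p,w')>0\}$, while $e_1({\sim\sim}\lozenge p,w)\in\{0,1\}$ equals $1$ exactly when some $w'$ has $wR^+w'>0$ and $e_1(p,w')>0$. Thus whenever the antecedent is positive, the consequent equals $1$; otherwise both vanish and the implication is trivially $e_1$-valid. For the $e_2$-side, $e_2({\sim\sim}\lozenge p,w)=1$ iff $wR^-w'\leq e_2(p,w')$ for all $w'$, and $e_2(\lozenge{\sim\sim}p,w)\in\{0,1\}$ equals $1$ iff for all $w'$ either $wR^-w'=0$ or $e_2(p,w')=1$. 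If $R^-$ is crisp, the first condition immediately entails the second, giving $e_2$-validity. Conversely, if $wR^-w'=c\in(0,1)$ for some $w,w'$, set $e_2(p,w'):=c$ and $e_2(p,w''):=1$ for all $w''\neq w'$; then the first condition holds but the second fails, refuting $e_2$-validity.

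The main bookkeeping obstacle is the asymmetric behaviour of ${\sim}$ on the two valuations — it is ${\sim_\mathsf{G}}$ on $e_1$ but the defined $1\coimplies_\mathsf{G}(-)$ on $e_2$ — so ${\sim\sim}\phi$ records ``$e_1(\phi)>0$'' on one side and ``$e_2(\phi)=1$'' on the other. Once this is pinned down and the $e_2$-clause of $\rightarrow$ (via $\coimplies_\mathsf{G}$) is tracked carefully, everything else reduces to a routine case split on whether the relevant values lie in $\{0\}$, $(0,1)$, or $\{1\}$.
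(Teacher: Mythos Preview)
Your overall strategy matches the paper's: in each part one support condition holds on every bi-relational frame while the other is equivalent to crispness of the designated relation. Your treatment of part~2 is essentially the paper's argument, and citing the earlier result for the $e_1$-side of part~1 is a legitimate shortcut.

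There is, however, a genuine slip in your $e_2$-argument for part~1. The claim that $e_2(\Box\triangle p,w)\in\{0,1\}$ is \emph{false}: since $R^-$ may be fuzzy, $e_2(\Box\triangle p,w)=\sup\{wR^-w'\wedge_\mathsf{G}{\sim_\mathsf{G}\sim_\mathsf{G}}e_2(p,w')\}=\sup\{wR^-w':e_2(p,w')>0\}$ can take any value in $[0,1]$ (e.g.\ $wR^-w'=\tfrac{1}{2}$ with $e_2(p,w')>0$ yields $\tfrac{1}{2}$). Hence the two expressions do \emph{not} ``equal $1$ under exactly the same condition''. The desired inequality $e_2(\Box\triangle p,w)\leq e_2(\triangle\Box p,w)$ is nevertheless true, but the correct justification is the one the paper gives: $e_2(\triangle\Box p,w)\in\{0,1\}$, and if it is $0$ then $e_2(\Box p,w)=0$, i.e.\ $wR^-w'\wedge_\mathsf{G}e_2(p,w')=0$ for all $w'$, which in turn forces $wR^-w'\wedge_\mathsf{G}e_2(\triangle p,w')=0$ for all $w'$, so $e_2(\Box\triangle p,w)=0$. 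With this fix your proposal is correct and coincides with the paper's proof.
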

\begin{proof}
Observe, first of all, that $e_i(\triangle\phi,w),e_i({\sim\sim}\phi,w)\in\{0,1\}$ for every $\phi$ and $i\in\{1,2\}$. Now let $R^+$ be crisp. We have
\begin{align*}
e_1(\triangle\Box p,w)=1&\text{ then }e_1(\Box p,w)=1\\
&\text{ then }\inf\{e_1(p,w'):wR^+w'\}=1\tag{$R^+$ is crisp}\\
&\text{ then }\inf\{e_1(\triangle p,w'):wR^+w'\}=1\\
&\text{ then }e_1(\Box\triangle p,w)=1
\end{align*}
\begin{align*}
e_2(\triangle\Box p,w)=0&\text{ then }e_2(\Box p,w)=0\\
&\text{ then }\sup\limits_{w'\in W}\{wR^-w'\wedge_\mathsf{G}e_2(p,w')\}=0\\
&\text{ then }\sup\limits_{w'\in W}\{wR^-w'\wedge_\mathsf{G}e_2(\triangle p,w')\}=0\\
&\text{ then }e_2(\Box\triangle p,w)=0
\end{align*}
For the converse, let $wR^+w'=x$ with $0<x<1$. We set $e(p,w')=(x,0)$ and $e(p,w'')=(1,0)$ elsewhere. It is clear that $e_1(\triangle\Box p,w)=1$ but $e_1(\Box\triangle p,w)=0$. Thus, $e_1(\triangle\Box p\rightarrow\Box\triangle p,w)\neq1$, as required.

The case of $R^-$ is considered dually. For crisp $R^-$, we have
\begin{align*}
e_1(\lozenge{\sim\sim}p,w)=1&\text{ then }\sup\limits_{w'\in W}\{wR^-w'\wedge_\mathsf{G}e_1({\sim\sim}p,w')\}=1\\
&\text{ then }\sup\limits_{w'\in W}\{wR^-w'\wedge_\mathsf{G}e_1(p,w')\}>0\\
&\text{ then }e_1(\lozenge p,w)>0\\
&\text{ then }e_1({\sim\sim}\lozenge p,w)=1
\end{align*}
\begin{align*}
e_2(\lozenge{\sim\sim}p,w)=0&\text{ then }\inf\{e_2({\sim\sim}p,w'):wR^-w'\}=0\tag{$R^-$ is crisp}\\
&\text{ then }\inf\{e_2(p,w'):wR^-w'\}<1\\
&\text{ then }e_2(\lozenge p,w)<1\\
&\text{ then }e_2({\sim\sim}\lozenge p,w)=0
\end{align*}
For the converse, let $wR^-w'=y$ with $y\in(0,1)$. We set $e(p,w')=(1,y)$ and $e(p,w'')=(1,0)$ elsewhere. It is clear that $e_2(\lozenge{\sim\sim} p,w)=0$ but $e_2({\sim\sim}\lozenge p,w)=1$. Thus, $e_2(\lozenge{\sim\sim}p\rightarrow{\sim\sim}\lozenge p,w)\neq0$, as required.
\end{proof}
\begin{remark}\label{rem:nocontradiction}
Note that there is no contradiction between Propositions~\ref{prop:partialproptoequivalence} and~\ref{prop:crispdefinition}. Indeed, $(\triangle\Box p\rightarrow\Box\triangle p)^\partial=\linelozenge{\sim\sim}p\rightarrow{\sim\sim}\linelozenge p$ in the bi-relational case. If, on the other hand, the underlying frame is \emph{mono-relational}, it either validates or refutes both formulas in Proposition~\ref{prop:crispdefinition}.
\end{remark}
\section{Transfer\label{sec:transfer}}
In~\cite{BilkovaFrittellaKozhemiachenko2023IGPL}, we studied \emph{transferrable formulas}, i.e., formulas over $\{\mathbf{0},\wedge,\vee,\rightarrow,\Box,\lozenge\}$ that are \emph{classically} valid on some crisp frame $\mathfrak{F}$ iff they are $\KbiG$-valid on $\mathfrak{F}$. Classical modal logic does not support fuzzy frames, however. Furthermore, $\fuzzyKGsquare$ does not extend $\fuzzyKbiG$ (recall Section~\ref{ssec:paraconsistentsemantics} and Fig.~\ref{fig:KGbirelnoextension}). Thus, it makes sense to study the transfer from $\fuzzyKbiG$ to $\fuzzyKGsquare$. In addition, since modalities in $\linebimodalLtrianglesquare$ \emph{do not treat $R^+$ and $R^-$ independently}, it makes sense to ask which formulas can be transferred from the \emph{mono-relational} $\fuzzyKbiG$ to $\fuzzybirelKGsquare$.

Let us now introduce the notions of transfer.
\begin{definition}[Transfer]\label{def:transfer}
We call $\phi\in\bimodalLtriangle(2)$ \emph{transferrable} iff the following condition holds for every pointed mono- or bi-relational frame $\langle\mathfrak{F},w\rangle$.
\begin{align*}
\mathfrak{F},w\models_{\KbiG}\phi&\text{ iff }\mathfrak{F},w\models_{\KGsquare}\phi
\end{align*}
\end{definition}
\begin{definition}[Bi-transfer]\label{def:bitransfer}
Let $\mathfrak{F}=\langle W,R,w\rangle$ and $\mathfrak{F}'=\langle W,S,w\rangle$ be two pointed frames. Let $\mathfrak{F}^{R,S}=\langle W,R^+_{R,S},R^-_{R,S}\rangle$ and $\mathfrak{F}^{S,R}=\langle W,R^+_{S,R},R^-_{S,R}\rangle$ be s.t.\
\begin{align*}
\forall w,w'\in W:\left[\begin{matrix}wR^+_{R,S}w'=wRw'&\text{ and }&wR^-_{R,S}w'=wSw'\\[.3em]&\text{ and }&\\[.3em]wR^+_{S,R}w'=wSw'&\text{ and }&wR^-_{S,R}w'=wRw'\end{matrix}\right]
\end{align*}
A~formula $\phi\in\bimodalLtriangle$ is called \emph{bi-transferrable} iff the following condition holds for every pair of pointed frames $\langle\mathfrak{F},w\rangle$ and $\langle\mathfrak{F}',w\rangle$.
\begin{align*}
\mathfrak{F},w\models_{\KbiG}\phi\text{ and }\mathfrak{F}',w\models_{\KbiG}\phi&\text{ iff }\mathfrak{F}^{R,S},w\models_{\KGsquare}\phi\text{ and }\mathfrak{F}^{S,R},w\models_{\KGsquare}\phi
\end{align*}
\end{definition}
\begin{remark}\label{rem:transferbitransfer}
We draw the attention of our readers to the important distinction between transfer and bi-transfer. For transfer, we use \emph{the same frame}. In contrast to that, for bi-transfer, we need \emph{two mono-relational frames with the same carrier set} from which we construct \emph{two bi-relational frames with the same carrier set}: the one where $R^+$ is associated with $R$ and $R^-$ with $S$ and the one where the relations are inverted --- $R^+$ is associated with $S$ and $R^-$ with $R$.
\end{remark}

First, let us obtain the characterisation of (bi-)transferrable formulas.
\begin{theorem}\label{theorem:transfercriterion}
Let $\phi\in\bimodalLtriangle(2)$. Then, $\phi$ is (bi-)transferrable iff for every pointed frame $\langle\mathfrak{F},w\rangle$ it holds that
\begin{align}
\text{ if }\mathfrak{F},w\models_{\KbiG}\phi,\text{ then }\mathfrak{F},w\models_{\KbiG}{\sim}\phi^\partial\label{equ:transfercriterion}
\end{align}
\end{theorem}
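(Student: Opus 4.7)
The plan is to reduce both variants of (bi-)transferability to the validity embedding of Theorem~\ref{theorem:validityembedding} applied to the $\neg$-free formula $\phi$. Since $\phi\in\bimodalLtriangle(2)$ contains no $\neg$, we have $\phi^*=\phi$, so that theorem gives, for every bi-relational pointed frame $\langle\mathfrak{G},u\rangle$,
\[
\mathfrak{G},u\models_{\KGsquare}\phi
\text{ iff }
\mathfrak{G},u\models_{\KbiG(2)}\phi
\text{ and }
\mathfrak{G},u\models_{\KbiG(2)}{\sim}\phi^\partial,
\]
under the convention that $\heartsuit_1$ is interpreted with $R^+$ and $\heartsuit_2$ with $R^-$. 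The essential point is then to observe that on a \emph{mono-relational} frame (where $R^+=R^-$) the two indexed copies of the modalities collapse, so that $\KbiG(2)$-validity of both $\phi$ and ${\sim}\phi^\partial$ reduces to ordinary $\KbiG$-validity.

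First I would handle transfer. Viewing a mono-relational frame $\mathfrak{F}=\langle W,R\rangle$ as bi-relational with $R^+=R^-=R$, the embedding above collapses to
\[
\mathfrak{F},w\models_{\KGsquare}\phi
\text{ iff }
\mathfrak{F},w\models_{\KbiG}\phi\text{ and }\mathfrak{F},w\models_{\KbiG}{\sim}\phi^\partial.
\]
Since $\KGsquare$-validity always entails $\KbiG$-validity of the $e_1$-side, the biconditional ``$\mathfrak{F},w\models_{\KbiG}\phi$ iff $\mathfrak{F},w\models_{\KGsquare}\phi$'' holds for every pointed frame exactly when the forward implication ``$\mathfrak{F},w\models_{\KbiG}\phi\Rightarrow\mathfrak{F},w\models_{\KbiG}{\sim}\phi^\partial$'' holds for every pointed frame, which is precisely~\eqref{equ:transfercriterion}.

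For bi-transfer I would trace how the two relations of $\mathfrak{F}^{R,S}$ and $\mathfrak{F}^{S,R}$ are used by $\phi$ and $\phi^\partial$. Inspecting Definition~\ref{def:translations} (and Convention~\ref{conv:monorelationallanguage} for the mono-relational source), every modality occurring in $\phi$ becomes, under the index convention, a modality using $R^+$, while every modality in $\phi^\partial$ ends up using $R^-$. Hence
\[
\mathfrak{F}^{R,S},w\models_{\KGsquare}\phi
\text{ iff }
\mathfrak{F},w\models_{\KbiG}\phi\text{ and }\mathfrak{F}',w\models_{\KbiG}{\sim}\phi^\partial,
\]
\[
\mathfrak{F}^{S,R},w\models_{\KGsquare}\phi
\text{ iff }
\mathfrak{F}',w\models_{\KbiG}\phi\text{ and }\mathfrak{F},w\models_{\KbiG}{\sim}\phi^\partial.
\]
The right-hand side of Definition~\ref{def:bitransfer} is the conjunction of these, so bi-transfer becomes: ``$\phi$ is $\KbiG$-valid at $w$ in both $\mathfrak{F}$ and $\mathfrak{F}'$'' is equivalent to ``in addition, ${\sim}\phi^\partial$ is $\KbiG$-valid at $w$ in both $\mathfrak{F}$ and $\mathfrak{F}'$''. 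Quantifying over all pairs $(\mathfrak{F},\mathfrak{F}')$ and all $w$, this equivalence again reduces precisely to~\eqref{equ:transfercriterion}.

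The main obstacle I anticipate is the bookkeeping: one must verify carefully that after the $^\partial$-translation every modality in $\phi^\partial$ has switched relation (from $R^+$ to $R^-$, and from $R^-$ to $R^+$), so that the $e_1$- and $e_2$-sides of $\phi\wedge{\sim}\phi^\partial$ really do decouple across $(\mathfrak{F},\mathfrak{F}')$ in the bi-transfer case. Once this inspection of Definition~\ref{def:translations} is done, the argument is routine, with all substantive content already packaged inside Theorem~\ref{theorem:validityembedding} and Lemma~\ref{lemma:partialembedding}.
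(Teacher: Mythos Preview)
Your proposal is correct and follows essentially the same route as the paper: both directions of transfer and bi-transfer are reduced to Theorem~\ref{theorem:validityembedding} applied to a $\neg$-free $\phi$ (so $\phi^*=\phi$), and the key observation is that $\KGsquare$-validity on a pointed frame amounts to $\KbiG$-validity of both $\phi$ and ${\sim}\phi^\partial$. Your handling of bi-transfer is organised slightly more directly than the paper's---you immediately decompose $\mathfrak{F}^{R,S},w\models_{\KGsquare}\phi$ into the pair $(\mathfrak{F},w\models_{\KbiG}\phi,\ \mathfrak{F}',w\models_{\KbiG}{\sim}\phi^\partial)$ and then quantify over pairs, whereas the paper phrases the same decoupling via four auxiliary equivalences between $e_1$- and $e_2$-validity on $\mathfrak{F}^{R,S}$ and $\mathfrak{F}^{S,R}$---but this is a presentational difference, not a substantive one, and the bookkeeping you flag (that $^\partial$ sends every modality of $\phi$ to one using the opposite relation) is exactly the inductive check the paper relies on.
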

\begin{proof}
We begin with the transferrable formulas. Let $\phi$ be transferrable and $\mathfrak{F},w\models_{\KbiG}\phi$ for some frame~$\mathfrak{F}$. Then, $\mathfrak{F},w\models_{\KGsquare}\phi$. Hence, $\mathfrak{F},w\models_{\KbiG}{\sim}\phi^\partial$, by Theorem~\ref{theorem:validityembedding}, as required. Now assume that \eqref{equ:transfercriterion} holds. If $\mathfrak{F},w\models_{\KbiG}\phi$, then $\mathfrak{F},w\models{\sim}\phi^\partial$. Thus, by Theorem~\ref{theorem:validityembedding}, we get that $\mathfrak{F},w\models_{\KGsquare}\phi$. If $\mathfrak{F},w\not\models_{\KbiG}\phi$, it is immediate that $\mathfrak{F}\not\models_{\KGsquare}\phi$. I.e., $\phi$ is transferrable.

The bi-transferrable formulas can be considered similarly. Let $\phi\in\bimodalLtriangle$ Assume that $\mathfrak{F}=\langle W,R\rangle$ and $\mathfrak{F}'=\langle W,S\rangle$. If $\phi$ is bi-transferrable, $\mathfrak{F},w\models\phi$, and $\mathfrak{F}',w\models_{\KbiG}\phi$ then $\mathfrak{F}^{R,S},w\models_{\KGsquare}\phi$ and $\mathfrak{F}^{S,R},w\models_{\KGsquare}\phi$. Hence, $\mathfrak{F}^{R,S},w\models_{\KGsquare}{\sim}\phi^\partial$ and $\mathfrak{F}^{S,R},w\models_{\KGsquare}{\sim}\phi^\partial$. Now recall from Convention~\ref{conv:monorelationallanguage} that $^\partial$ does not add new modalities when applied to $\bimodalLtriangle$ formulas. Since $\Box$ and $\lozenge$ in $\KbiG$ take into account only one relation (the first one), we have that $\mathfrak{F},w\models_{\KbiG}{\sim}\phi^\partial$ and $\mathfrak{F},w\models_{\KbiG}{\sim}\phi^\partial$, as required.

For the converse, assume that \eqref{equ:transfercriterion} holds, let $\mathfrak{F}$ and $\mathfrak{F}'$ be as in Definition~\ref{def:bitransfer}, $\mathfrak{F},w\models_{\KbiG}\phi$, and $\mathfrak{F}',w\models_{\KbiG}\phi$. Then $\mathfrak{F},w\models_{\KbiG}{\sim}\phi^\partial$ and $\mathfrak{F}',w\models_{\KbiG}{\sim}\phi^\partial$. It is clear that $\phi$ and ${\sim}\phi^\partial$ are $e_1$-valid (recall Definition~\ref{def:paraconsistentsemantics}) on $\langle\mathfrak{F}^{R,S},w\rangle$ and $\langle\mathfrak{F}^{S,R},w\rangle$. It remains to show that $\phi$ and ${\sim}\phi^\partial$ are $e_2$-valid as well.

It is easy to check by induction that the following statements hold.
\begin{itemize}
\item $\phi$ is $e_2$-valid on $\langle\mathfrak{F}^{R,S},w\rangle$ iff ${\sim}\phi^\partial$ is $e_1$-valid on $\langle\mathfrak{F}^{S,R},w\rangle$.
\item $\phi$ is $e_2$-valid on $\langle\mathfrak{F}^{S,R},w\rangle$ iff ${\sim}\phi^\partial$ is $e_1$-valid on $\langle\mathfrak{F}^{R,S},w\rangle$.
\item ${\sim}\phi^\partial$ is $e_2$-valid on $\langle\mathfrak{F}^{R,S},w\rangle$ iff $\phi$ is $e_1$-valid on $\langle\mathfrak{F}^{S,R},w\rangle$.
\item ${\sim}\phi^\partial$ is $e_2$-valid on $\langle\mathfrak{F}^{S,R},w\rangle$ iff $\phi$ is $e_1$-valid on $\langle\mathfrak{F}^{R,S},w\rangle$.
\end{itemize}
Thus, $\mathfrak{F}^{R,S},w\models_{\KGsquare}\phi$ and $\mathfrak{F}^{S,R},w\models_{\KGsquare}\phi$, as required. On the other hand, if $\mathfrak{F},w\not\models_{\KbiG}\phi$ (or $\mathfrak{F}',w\not\models_{\KbiG}\phi$, respectively), it is clear that $\mathfrak{F}^{R,S},w\not\models_{\KGsquare}\phi$ ($\mathfrak{F}^{S,R},w\not\models_{\KGsquare}\phi$).

The result now follows.
\end{proof}

Unfortunately, there seems to be no general way of establishing whether \eqref{equ:transfercriterion} holds for an arbitrary~$\phi$ since modal formulas encode \emph{second-order} conditions on the frame in the general case. Still, it is possible to show that some classes of formulas are transferrable. Namely, we will prove that Lemmon--Scott formulas are transferrable. To do this, we prove the (fuzzy analogue of) the Lemmon--Scott correspondence theorem for $\KG$.

Recall from~\cite[Theorems~4.1 and~4.2]{LemmonScott1977} that the Lemmon--Scott correspondence theorem for the classical modal logic states that a (crisp) frame $\mathfrak{F}=\langle W,R\rangle$ satisfies the first-order condition
\begin{align*}
\forall x,y,z\exists w:(xR^hy~\&~xR^jz)\Rightarrow(yR^iw~\&~zR^kw)
\end{align*}
iff $\lozenge^h\Box^ip\rightarrow\Box^j\lozenge^kp$ is (classically) valid on $\mathfrak{F}$. Here, $\Box^n$ and $\lozenge^n$ denote strings of $n$ $\Box$'s and $\lozenge$'s, respectively, while $R^n$ is a shorthand for the following composition of $R$ with itself: $\underbrace{R\circ\ldots\circ R}_{n\text{ times}}$ (with $wR^0w'$ standing for $w=w'$).
\begin{definition}[Compositions of fuzzy relations]\label{def:fuzzycomposition}
Let $R$ and $S$ be two fuzzy relations on $W$. We set
\begin{itemize}
\item $u(R\circ S)u'=\sup\{uRw\wedge_\mathsf{G}wSu':w\in W\}$;
\item $(R\circ S)(u)=\{u':u(R\circ S)u'>0\}$;
\item $uR^0u'=\begin{cases}0&\text{if }u\neq u'\\1&\text{if }u=u'\end{cases}$;
\item $uR^nu'=u(R\circ R^{n-1})u'$.
\end{itemize}
\end{definition}

It is clear that the following holds:
\begin{align*}
e(\Box^n\phi,w)&=\inf\{wR^nw'\rightarrow_\mathsf{G}e(\phi,w')\}&e(\lozenge^n\phi,w)&=\sup\{wR^nw'\wedge_\mathsf{G}e(\phi,w')\}
\end{align*}
\begin{theorem}[Fuzzy Lemmon--Scott correspondence]\label{theorem:FLS}
Let $\mathfrak{F}=\langle W,R\rangle$ be a fuzzy frame. Then
\begin{align*}
\mathfrak{F},x\models_{\KbiG}\lozenge^h\Box^ip\rightarrow\Box^j\lozenge^kp&\text{ iff }\underbrace{\forall y,z:(xR^hy\wedge_\mathsf{G}xR^jz)\leq\sup\limits_{w\in W}\{yR^iw\wedge_\mathsf{G}zR^kw\}}_{\mathsf{FLS}}
\end{align*}
\end{theorem}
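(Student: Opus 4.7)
The plan is to prove both directions using the fuzzy analogue of the classical Lemmon--Scott strategy, adapted via the standard Gödel-implication threshold tricks and $\varepsilon$-approximations for the suprema/infima that the fuzzy semantics requires.

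For the forward direction ($\Rightarrow$), I would fix arbitrary $y, z \in W$ and mimic the classical ``canonical'' valuation by setting $e(p, w) \coloneqq yR^iw$ for every $w \in W$. Unfolding the semantics gives $e(\Box^i p, y) = \inf_v\{yR^iv \rightarrow_\mathsf{G} yR^iv\} = 1$, hence $e(\lozenge^h \Box^i p, x) \geq xR^hy \wedge_\mathsf{G} 1 = xR^hy$. The assumed validity then forces $e(\Box^j \lozenge^k p, x) \geq xR^hy$, and instantiating the outer infimum at the particular $z$ yields $xR^jz \rightarrow_\mathsf{G} e(\lozenge^k p, z) \geq xR^hy$. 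A short case analysis on whether $xR^jz \leq e(\lozenge^k p, z)$ shows in either case that $e(\lozenge^k p, z) \geq xR^hy \wedge_\mathsf{G} xR^jz$. Since $e(\lozenge^k p, z) = \sup_w\{zR^kw \wedge_\mathsf{G} yR^iw\}$, this is exactly $\mathsf{FLS}$ at the fixed pair $y, z$.

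For the backward direction ($\Leftarrow$), I would fix $x$ and any valuation $e$, and show that $\alpha \coloneqq e(\lozenge^h \Box^i p, x) \leq e(\Box^j \lozenge^k p, x) \eqqcolon \beta$ by proving $\beta \geq \gamma$ for every $\gamma < \alpha$. Given such a $\gamma$, pick $y$ with $xR^hy > \gamma$ and $e(\Box^i p, y) > \gamma$; the latter means that for every $v$, either $yR^iv \leq e(p, v)$ or $e(p, v) > \gamma$. For an arbitrary $z$, $\mathsf{FLS}$ combined with $xR^hy > \gamma$ gives $\sup_w\{yR^iw \wedge_\mathsf{G} zR^kw\} \geq \gamma \wedge_\mathsf{G} xR^jz$, so for every $\varepsilon > 0$ there is $w_\varepsilon$ with $yR^iw_\varepsilon$ and $zR^kw_\varepsilon$ both at least $(\gamma \wedge_\mathsf{G} xR^jz) - \varepsilon$. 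A two-case argument on the disjunction furnished by $e(\Box^i p, y) > \gamma$ shows $e(p, w_\varepsilon) \wedge_\mathsf{G} zR^kw_\varepsilon \geq (\gamma \wedge_\mathsf{G} xR^jz) - \varepsilon$ in either case; taking the supremum over $w$ and letting $\varepsilon \to 0$ gives $e(\lozenge^k p, z) \geq \gamma \wedge_\mathsf{G} xR^jz$, which is equivalent to $xR^jz \rightarrow_\mathsf{G} e(\lozenge^k p, z) \geq \gamma$. Taking the infimum over $z$ yields $\beta \geq \gamma$, as desired.

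The main obstacle I anticipate is bookkeeping in the backward direction, specifically the interplay between the non-attained suprema in the $\lozenge$ clauses and the Gödel implication's threshold behaviour ($a \rightarrow_\mathsf{G} b \geq c$ iff $a \leq b$ or $b \geq c$). The $\varepsilon$-approximation has to be pushed through both the $\mathsf{FLS}$-supremum and the $\lozenge^k$-supremum without losing the bound $\gamma \wedge_\mathsf{G} xR^jz$; the two cases arising from ``$yR^iw \leq e(p,w)$ vs.\ $e(p,w) > \gamma$'' must each be reduced to the same uniform lower bound. The classical proof hides all of this because values are $\{0, 1\}$, but in the fuzzy setting it is the only nontrivial computation; the rest is essentially unfolding definitions.
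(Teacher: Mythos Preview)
Your proposal is correct and uses the same canonical valuation $e(p,w)=yR^iw$ as the paper, but the packaging differs. The paper argues both directions by contrapositive: from $e(\lozenge^h\Box^ip,x)>e(\Box^j\lozenge^kp,x)$ it unfolds the semantics into a condition $\Psi$ on $x,y',z'$ and then asserts that $\Psi$ is incompatible with $\mathsf{FLS}$; conversely, from a witnessed failure of $\mathsf{FLS}$ at $y,z$ it defines the very valuation you use (with $e(p,u)=0$ outside $R^i(y)$, which coincides with your definition since $yR^iu=0$ there) and computes that the implication fails at $x$. Your write-up instead proves both implications directly: the forward direction is your instantiation-at-$z$ plus the case split on $xR^jz\leq e(\lozenge^kp,z)$, and the backward direction is your $\gamma<\alpha$ / $\varepsilon$-approximation argument feeding the disjunction ``$yR^iw\leq e(p,w)$ or $e(p,w)>\gamma$'' through the $\mathsf{FLS}$-witness $w_\varepsilon$.

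What each buys: the paper's contrapositive route is shorter on the page but leaves the incompatibility of $\Psi$ with $\mathsf{FLS}$ as an exercise, and it passes through the non-obvious rewriting $\sup_w\{z'R^kw\wedge_\mathsf{G}e(p,w)\}\leq\sup_w\{z'R^kw\}\wedge_\mathsf{G}\sup_w\{e(p,w)\}$. Your direct argument is longer but fully explicit, and the $\varepsilon$-bookkeeping you flag as the main obstacle does go through exactly as you describe (in particular, in Case~2 you have $e(p,w_\varepsilon)>\gamma\geq\gamma\wedge_\mathsf{G}xR^jz$, and in the final step the subcase $xR^jz<\gamma$ forces $e(\lozenge^kp,z)\geq xR^jz$, i.e., the implication evaluates to $1$). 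Either route is acceptable; yours is the more transparent of the two.
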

\begin{proof}
Let $\mathfrak{F},x\not\models_{\KbiG}\lozenge^h\Box^ip\rightarrow\Box^j\lozenge^kp$. We prove that the condition on $R$ does not hold. We pick $e$~on $\mathfrak{F}$ s.t.\ $e(\lozenge^h\Box^ip,x)>e(\Box^j\lozenge^kp,x)$ and proceed as follows.
\begin{align*}
e(\lozenge^h\Box^ip,x)>e(\Box^j\lozenge^kp,x)&\text{ then }\sup\limits_{y'\in W}\{xR^hy'\wedge_{\mathsf{G}}e(\Box^ip,y')\}>\inf\limits_{z'\in W}\{xR^jz'\rightarrow_\mathsf{G}e(\lozenge^kp,z')\}\\
&\text{ then }\exists y',z':\left[\begin{matrix}xR^hy'>(xR^jz'\rightarrow_\mathsf{G}e(\lozenge^kp,z'))\\\text{and}\\e(\Box^ip,y')>(xR^jz'\rightarrow_\mathsf{G}e(\lozenge^kp,z'))\end{matrix}\right]\\
&\text{ then }\exists y',z':\left[\begin{matrix}xR^hy'>\sup\limits_{w\in W}\{z'R^kw\wedge_\mathsf{G}e(p,w)\}\\\text{and}\\xR^jz'>\sup\limits_{w\in W}\{z'R^kw\wedge_\mathsf{G}e(p,w)\}\\\text{ and }\\\inf\limits_{w\in W}\{y'R^iw\rightarrow_\mathsf{G}e(p,w)\}>\sup\limits_{w\in W}\{z'R^kw\wedge_\mathsf{G}e(p,w)\}\end{matrix}\right]\\
&\text{ then }\exists y',z':\left[\begin{matrix}xR^hy'>\left(\sup\limits_{w\in W}\{z'R^kw\}\wedge_\mathsf{G}\sup\limits_{w\in W}\{e(p,w)\}\right)\\\text{and}\\xR^jz'>\left(\sup\limits_{w\in W}\{z'R^kw\}\wedge_\mathsf{G}\sup\limits_{w\in W}\{e(p,w)\}\right)\\\text{ and }\\\inf\limits_{y'R^iw>e(p,w)}\{e(p,w)\}>\left(\sup\limits_{w\in W}\{z'R^kw\}\wedge_\mathsf{G}\sup\limits_{w\in W}\{e(p,w)\}\right)\end{matrix}\right]\\
&\text{ then }\underbrace{\exists y',z':\left[\begin{matrix}xR^hy'>\left(\sup\limits_{w\in W}\{z'R^kw\}\wedge_\mathsf{G}\sup\limits_{w\in W}\{e(p,w)\}\right)\\\text{and}\\xR^jz'>\left(\sup\limits_{w\in W}\{z'R^kw\}\wedge_\mathsf{G}\sup\limits_{w\in W}\{e(p,w)\}\right)\\\text{ and }\\\inf\limits_{y'R^iw>e(p,w)}\{e(p,w)\}>\sup\limits_{w\in W}\{z'R^kw\}\end{matrix}\right]}_{\Psi}
\end{align*}
It is clear that $\mathsf{FLS}$ and $\Psi$ are incompatible.

For the converse, let $\langle\mathfrak{F},x\rangle$ fail $\mathsf{FLS}$. Namely, let $x,y,z\in\mathfrak{F}$ be s.t.\ $(xR^hy\wedge_\mathsf{G}xR^jz)>\sup\limits_{w\in W}\{yR^iw\wedge_\mathsf{G}zR^kw\}$. It is clear that $(xR^hy\wedge_\mathsf{G}xR^jz)>\left(\sup\limits_{w\in W}\{yR^iw\}\wedge_\mathsf{G}\sup\limits_{w\in W}\{zR^kw\}\right)$. We set the valuation as follows: $e(p,w)=yR^iw$ for every $w\in R^i(y)$ and $e(p,u)=0$ otherwise. We have that $e(\Box^ip,y)=1$ and either
\begin{align}\label{equ:FLSeither}
e(\lozenge^kp,z)=\sup\limits_{w\in W}\{e(p,w)\}=\sup\limits_{w\in W}\{yR^iw\}<1&&\left(\text{if }\sup\limits_{w\in W}\{yR^iw\}<\sup\limits_{w\in W}\{zR^kw\}\right)
\end{align}
or
\begin{align}\label{equ:FLSor}
e(\lozenge^kp,z)=\sup\limits_{w\in W}\{zR^kw\}<1&&\left(\text{if }\sup\limits_{w\in W}\{yR^iw\}\geq\sup\limits_{w\in W}\{zR^kw\}\right)
\end{align}
If \eqref{equ:FLSeither} is the case, we have that $e(\Box^j\lozenge^kp,x)\leq\sup\limits_{w\in W}\{yR^iw\}<xR^hy$ and $e(\lozenge^h\Box^ip,x)\geq xR^hy$. If \eqref{equ:FLSor} holds, then $e(\Box^j\lozenge^kp,x)\leq\sup\limits_{w\in W}\{zR^kw\}$ but $e(\lozenge^h\Box^ip,x)\geq xR^hy>\sup\limits_{w\in W}\{zR^kw\}$. In both cases, we have that $e(\lozenge^h\Box^ip\rightarrow\Box^j\lozenge^kp,x)<1$, as required.
\end{proof}
Let us now use Theorem~\ref{theorem:FLS} to obtain the transfer of Lemmon--Scott formulas.
\begin{corollary}\label{cor:FLStransfer}~
\begin{enumerate}
\item Lemmon--Scott formulas are transferrable in mono-relational frames.
\item Lemmon--Scott formulas are bi-transferrable.
\end{enumerate}
\end{corollary}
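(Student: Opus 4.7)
The plan is to reduce both claims to a single implication via Theorem~\ref{theorem:transfercriterion} and then discharge it using the fuzzy Lemmon--Scott correspondence of Theorem~\ref{theorem:FLS}. Fix $\phi=\lozenge^h\Box^ip\rightarrow\Box^j\lozenge^kp$. By Theorem~\ref{theorem:transfercriterion}, both transferability on mono-relational frames and bi-transferability follow once we establish that, on every pointed mono-relational frame $\langle\mathfrak{F},x\rangle$, the $\KbiG$-validity of $\phi$ at $x$ implies the $\KbiG$-validity of ${\sim}\phi^\partial$ at $x$.

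First, I would compute $\phi^\partial$. Definition~\ref{def:translations} together with Convention~\ref{conv:monorelationallanguage} gives, in the mono-relational setting, $(\Box\psi)^\partial=\lozenge\psi^\partial$ and $(\lozenge\psi)^\partial=\Box\psi^\partial$. A short induction on strings of modalities then yields $(\lozenge^h\Box^ip)^\partial=\Box^h\lozenge^ip^*$ and $(\Box^j\lozenge^kp)^\partial=\lozenge^j\Box^kp^*$, hence
\begin{align*}
\phi^\partial\;=\;\lozenge^j\Box^kp^*\coimplies\Box^h\lozenge^ip^*.
\end{align*}
Since $e({\sim}\chi,w)=1$ iff $e(\chi,w)=0$, and since $a\coimplies_\mathsf{G} b=0$ iff $a\leq b$, the $\KbiG$-validity of ${\sim}\phi^\partial$ at $\langle\mathfrak{F},x\rangle$ is equivalent to the $\KbiG$-validity at $\langle\mathfrak{F},x\rangle$ of the Lemmon--Scott formula $\lozenge^j\Box^kp\rightarrow\Box^h\lozenge^ip$ (the fresh variable $p^*$ introduced by Definition~\ref{def:translations} can be renamed back to $p$ without loss).

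The last step is to invoke Theorem~\ref{theorem:FLS} twice and compare the resulting first-order conditions:
\begin{align*}
\phi\text{ valid at }x &\iff \forall y,z\colon\;xR^hy\wedge_\mathsf{G}xR^jz\;\leq\;\sup_{w\in W}\{yR^iw\wedge_\mathsf{G}zR^kw\},\\
\lozenge^j\Box^kp\rightarrow\Box^h\lozenge^ip\text{ valid at }x &\iff \forall y,z\colon\;xR^jy\wedge_\mathsf{G}xR^hz\;\leq\;\sup_{w\in W}\{yR^kw\wedge_\mathsf{G}zR^iw\}.
\end{align*}
Swapping the bound names $y\leftrightarrow z$ in the second condition and using the commutativity of $\wedge_\mathsf{G}$ turns it into the first verbatim. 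Thus $\phi$ and ${\sim}\phi^\partial$ are validated by exactly the same pointed fuzzy frames, which in particular yields~\eqref{equ:transfercriterion} and hence both parts of the corollary.

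The only real obstacle is the bookkeeping in computing $\phi^\partial$ and in reducing the validity of ${\sim}(\cdot\coimplies\cdot)$ to that of a bare implication; once this algebraic step is carried out, the argument reduces to the symmetry of the FLS first-order condition under an innocuous renaming of bound variables.
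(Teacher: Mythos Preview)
Your argument is correct and follows essentially the same route as the paper: reduce both parts to criterion~\eqref{equ:transfercriterion} via Theorem~\ref{theorem:transfercriterion}, compute $\phi^\partial$ in the mono-relational setting, observe that the validity of ${\sim}(\lozenge^j\Box^kp^*\coimplies\Box^h\lozenge^ip^*)$ coincides with that of the Lemmon--Scott formula $\lozenge^j\Box^kp\rightarrow\Box^h\lozenge^ip$, and then appeal twice to Theorem~\ref{theorem:FLS} plus the evident $y\leftrightarrow z$ symmetry of $\mathsf{FLS}$. The only cosmetic difference is that the paper first replaces $\phi$ by $\triangle\phi$ (so that ${\sim}(\triangle\phi)^\partial={\sim\sim\sim}(\lozenge^j\Box^kp^*\coimplies\Box^h\lozenge^ip^*)\equiv\triangle(\lozenge^j\Box^kp^*\rightarrow\Box^h\lozenge^ip^*)$), whereas you pass directly from ${\sim}(\chi_1\coimplies\chi_2)$ being valid to $\chi_1\rightarrow\chi_2$ being valid; since ${\sim\sim\sim}\psi\equiv{\sim}\psi$, these computations are equivalent and your version is marginally more streamlined.
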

\begin{proof}
By Theorem~\ref{theorem:transfercriterion}, it suffices to check that every Lemmon--Scott formula $\phi$ is $\KbiG$-valid on a~mono-relational pointed frame $\langle\mathfrak{F},w\rangle$ iff ${\sim}\phi^\partial$ is valid on $\langle\mathfrak{F},w\rangle$. Now, observe that $\mathfrak{F},w\models_{\KbiG}\phi$ iff $\mathfrak{F},w\models_{\KbiG}\triangle\phi$. Let $\triangle\phi=\triangle(\lozenge^h\Box^ip\rightarrow\Box^j\lozenge^kp)$.

It is clear that since $\mathfrak{F}$ is mono-relational, ${\sim}(\triangle(\lozenge^h\Box^ip\rightarrow\Box^j\lozenge^kp))^\partial={\sim\sim\sim}(\lozenge^j\Box^kp^*\coimplies\Box^h\lozenge^ip^*)$. This is equivalent (in $\biG$) to $\triangle(\lozenge^j\Box^kp^*\rightarrow\Box^h\lozenge^ip^*)$. Now, by Theorem~\ref{theorem:FLS}, we have that
\begin{align*}
\mathfrak{F},x\models_{\KbiG}\triangle(\lozenge^j\Box^kp^*\rightarrow\Box^h\lozenge^ip^*)&\text{ iff }\underbrace{\forall y,z:(xR^jy\wedge_\mathsf{G}xR^hz)\leq\sup\limits_{w\in W}\{yR^kw\wedge_\mathsf{G}zR^iw\}}_{\mathsf{FLS}'}
\end{align*}
It is clear that $\mathsf{FLS}$ and $\mathsf{FLS}'$ are equivalent conditions on frames. Thus, indeed, a Lemmon--Scott formula $\phi$ is $\KbiG$-valid on a pointed frame iff ${\sim}\phi^\partial$ is $\KbiG$-valid on that same pointed frame. The result now follows.
\end{proof}
\section{Complexity\label{sec:complexity}}
In this section, we establish the $\pspace$-completeness of the modal logics discussed in the paper. First, we tackle $\fuzzyKbiG$. Since we add only $\triangle$ (or $\coimplies$) which is a~propositional order-based connective, the proof is identical to the proof of the $\pspace$-completeness of $\fuzzyKG$ given in~\cite{CaicedoMetcalfeRodriguezRogger2017} which is why we will only state the required definitions and formulate the result.
\begin{definition}[$\mathsf{F}$-models of $\KbiG$]\label{def:Fmodels}
An $\mathsf{F}$-model is a tuple $\mathfrak{M}=\langle W,R,T,e\rangle$ with $\langle W,R,e\rangle$ being a~$\KbiG$ model and $T:W\rightarrow\mathcal{P}_{<\omega}([0,1])$ be s.t.\ $\{0,1\}\subseteq T(w)$ for all $w\in W$. $e$ is extended to the complex formulas as in $\KbiG$ in the cases of propositional connectives, and in the modal cases, as follows.
\begin{align*}
e(\Box\phi,w)&=\max\{x\in T(w):x\leq\inf\limits_{w'\in W}\{wRw'\rightarrow_\mathsf{G}e(\phi,w')\}\}\\
e(\lozenge\phi,w)&=\min\{x\in T(w):x\geq\sup\limits_{w'\in W}\{wRw'\wedge_\mathsf{G}e(\phi,w')\}\}
\end{align*}
\end{definition}

The next lemma is a straightforward extension of~\cite[Theorem~1]{CaicedoMetcalfeRodriguezRogger2013} to $\KbiG$. The proof is essentially the same since we add only $\triangle$ and $\coimplies$ (which are propositional connectives) to the language.
\begin{lemma}\label{lemma:FFMP}
$\phi$ is $\KbiG$-valid iff $\phi$ is true in all $\mathsf{F}$-models iff $\phi$ is true in all $\mathsf{F}$-models whose depth is $O(|\phi|)$ s.t.\ $|W|\leq(|\phi|+2)^{|\phi|}$ and $|T(w)|\leq|\phi|+2$ for all $w\in W$.
\end{lemma}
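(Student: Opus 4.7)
The plan is to adapt the argument of~\cite[Theorem~1]{CaicedoMetcalfeRodriguezRogger2013} (and the refinement used for the $\pspace$ bound in~\cite{CaicedoMetcalfeRodriguezRogger2017}) to $\fuzzyKbiG$. The key observation that makes the adaptation straightforward is that $\triangle$ and $\coimplies$ are \emph{propositional} order-based connectives, whose output at a world depends only on the relative order of the inputs together with the extremes $0$ and $1$. Since Definition~\ref{def:Fmodels} already requires $\{0,1\}\subseteq T(w)$, these two connectives behave in $\mathsf{F}$-models exactly as they do in $\biG$, and no new clause or new element of $T(w)$ is needed to accommodate them.

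The first equivalence, between $\KbiG$-validity and validity in all $\mathsf{F}$-models, would be obtained in two steps. For the direction from $\KbiG$-validity to $\mathsf{F}$-validity, I would show that every $\mathsf{F}$-counter-model $\langle W,R,T,e\rangle$ for $\phi$ yields an ordinary $\KbiG$ counter-model on the same frame with the same valuation on variables, by induction on subformulas: at each stage the $\mathsf{F}$-value is a rounded version of the $\KbiG$-value within $T(w)$, and the two coincide on $\phi$ itself because $\phi$ fails in the $\mathsf{F}$-model. Conversely, given a $\KbiG$ counter-model of $\phi$, I would set $T(w)$ to be $\{0,1\}$ together with the values at $w$ of every subformula of $\phi$ under the $\KbiG$-valuation; the max/min clauses of Definition~\ref{def:Fmodels} then reproduce the $\KbiG$ values exactly on subformulas of $\phi$.

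The second equivalence, the small-model property, would be obtained by a witness-selection unraveling. Starting from an $\mathsf{F}$-counter-model of $\phi$ at a world $w_0$, I would build a tree-shaped $\mathsf{F}$-model rooted at $w_0$: at each world $w$ placed in the tree and for each modal subformula $\Box\psi$ (resp.\ $\lozenge\psi$) of $\phi$, I would pick a single successor witnessing, up to the grain of $T(w)$, the infimum (resp.\ supremum) in the $\Box$- or $\lozenge$-clause of Definition~\ref{def:Fmodels}. This yields branching at most $|\phi|$ (one witness per modal subformula) and depth at most the modal depth of $\phi$, hence $O(|\phi|)$, so the total number of worlds is bounded by $(|\phi|+2)^{|\phi|}$. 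At each such $w$, I would then truncate $T(w)$ to $\{0,1\}$ together with the values at $w$ of the subformulas of $\phi$, which gives $|T(w)|\leq|\phi|+2$.

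The main obstacle is the inductive verification that this truncated tree model still refutes $\phi$ at $w_0$. One has to check that replacing arbitrary suprema and infima by single selected witnesses, and shrinking $T(w)$ accordingly, preserves the value of every subformula of $\phi$ at the worlds of the new tree. This is where the order-theoretic character of all connectives of $\bimodalLtriangle$ is used essentially: since $\wedge,\vee,\rightarrow,\coimplies,\sim,\triangle$ all commute with taking suitable finite suprema and infima in the Gödel algebra, and since the max/min clauses for $\Box$ and $\lozenge$ depend only on the presence of the critical values in $T(w)$, the induction goes through exactly as in~\cite[Theorem~1]{CaicedoMetcalfeRodriguezRogger2013}. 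No new subtlety is introduced by $\triangle$ or $\coimplies$.
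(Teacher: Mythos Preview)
Your proposal is correct and takes essentially the same approach as the paper, which does not give a proof but simply remarks that~\cite[Theorem~1]{CaicedoMetcalfeRodriguezRogger2013} carries over verbatim because $\triangle$ and $\coimplies$ are propositional order-based connectives requiring no new clause in the $\mathsf{F}$-model semantics. Your sketch is a faithful unpacking of that adaptation; the one step phrased loosely is ``the two coincide on $\phi$ itself because $\phi$ fails in the $\mathsf{F}$-model'', which is not itself an argument---in the cited proof this direction is handled by a separate model transformation, not by the same frame and valuation---but this is a detail already contained in the reference you invoke.
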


It is now clear that $\KbiG$ is decidable. To establish its complexity, we can utilise the algorithm described in~\cite{CaicedoMetcalfeRodriguezRogger2017}. The algorithm will work for $\KbiG$ since its only difference from $\KG$ is $\triangle$ and $\coimplies$ which are extensional connectives. Alternatively, we could expand the tableaux calculus for $\KG$ from~\cite{Rogger2016phd} with the rules for $\triangle$ and $\coimplies$ and use it to construct the decision procedure. Note furthermore, that in the presence of $\triangle$, validity and unsatisfiability are reducible to one another: $\phi$ is valid iff ${\sim}\triangle\phi$ is unsatisfiable; $\phi$ is unsatisfiable iff ${\sim}\phi$ is valid. The following statement is now immediate.
\begin{proposition}\label{prop:KbiGpspacecompleteness}
The validity of $\fuzzyKbiG$ is $\pspace$-complete.
\end{proposition}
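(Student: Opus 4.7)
The plan is to prove both bounds separately. For $\pspace$-hardness, I would observe that $\fuzzyKbiG$ conservatively extends $\fuzzyKG$ over the language $\bimodalL$: on any $\fuzzyKG$ model, the semantic clauses for $\wedge,\vee,\rightarrow,\sim,\Box,\lozenge$ coincide with those of $\fuzzyKbiG$, and the deduction theorem together with Theorem~\ref{theorem:HfuzzyKbiGweakcompleteness} gives conservativity. Since $\fuzzyKG$-validity is already $\pspace$-hard (\cite{CaicedoMetcalfeRodriguezRogger2017}), the hardness transfers to $\fuzzyKbiG$. Alternatively, one can use the standard embedding of classical $\mathbf{K}$ into $\fuzzyKG$ via prefixing every subformula with $\triangle$ or ${\sim\sim}$ to simulate Boolean values, yielding the same lower bound.

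For $\pspace$-membership, I would adapt the algorithm of \cite{CaicedoMetcalfeRodriguezRogger2017} using Lemma~\ref{lemma:FFMP} as the combinatorial backbone. The lemma tells us that if $\phi$ is refuted, then it is refuted in an $\mathsf{F}$-model of depth $O(|\phi|)$ in which each world carries a set $T(w)$ of at most $|\phi|+2$ candidate truth values drawn from a polynomially described pool. The algorithm proceeds by depth-first exploration of such an $\mathsf{F}$-model: at the current world $w$, one nondeterministically guesses the finite set $T(w)$ and a value $e(\psi,w)\in T(w)$ for each subformula $\psi$ of $\phi$ in polynomial space, then for every pair $(\Box\psi,w)$ or $(\lozenge\psi,w)$ recursively verifies that a successor witnessing the required infimum/supremum exists (or, for the universal direction of $\Box$, that no successor violates the guessed value), reusing space after each recursive call returns. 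The recursion depth is bounded by the modal depth of $\phi$, which is $O(|\phi|)$, and each stack frame only stores a list of values of subformulas (size $O(|\phi|\cdot\log|\phi|)$), giving a total of polynomial space.

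The only additions one must handle relative to $\fuzzyKG$ are the clauses for $\triangle$ and (if desired) $\coimplies$. Both are purely extensional propositional operations on $[0,1]$, so at each world one simply evaluates $\triangle_\mathsf{G}$ and $\coimplies_\mathsf{G}$ on the already-guessed values of the immediate subformulas; this adds nothing to the modal depth and only constant overhead per subformula. As the authors point out, in the presence of $\triangle$ one can reduce validity to unsatisfiability via $\phi\equiv_{\text{valid}}{\sim}\triangle\phi\text{ unsatisfiable}$ and conversely, so it suffices to implement an $\mathsf{NPSpace}$ satisfiability procedure; Savitch's theorem then gives $\pspace=\mathsf{NPSpace}$ and hence the full complexity bound.

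The main obstacle, as in \cite{CaicedoMetcalfeRodriguezRogger2017}, is verifying that the nondeterministic guess of $T(w)$ and of the local values is consistent with the recursive successor checks, in particular for the $\Box$ clause where one must simultaneously ensure that no successor falsifies $wRw'\rightarrow_\mathsf{G} e(\psi,w')$ below the guessed value, while every other modal subformula at $w$ is also witnessed. This is handled exactly as in the cited paper by processing one modal subformula at a time, reusing the same polynomial workspace across branches. The extension to $\triangle$ and $\coimplies$ introduces no new difficulty because these connectives do not interact with the modal depth or with the set $T(w)$ beyond the already guessed truth values.
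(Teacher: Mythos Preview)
Your proposal is correct and follows essentially the same approach as the paper: membership via Lemma~\ref{lemma:FFMP} and the algorithm of \cite{CaicedoMetcalfeRodriguezRogger2017}, noting that $\triangle$ and $\coimplies$ are purely extensional and do not affect the modal depth or the sets $T(w)$; hardness via conservativity over $\fuzzyKG$. The paper itself is terser (it simply declares the result ``immediate'' after the lemma), while you spell out the depth-first search and the reduction between validity and unsatisfiability, but the underlying argument is the same.
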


Bi-relational $\mathsf{F}$-models can be introduced in the same way.
\begin{definition}[$\mathsf{F}$-models of $\KbiG(2)$]\label{def:F2models}
A bi-relational $\mathsf{F}$-model is a tuple $\mathfrak{M}=\langle W,R_1,R_2,T_1,T_2,e\rangle$ with $\langle W,R_1,R_2,e\rangle$ being a~$\KbiG(2)$ model and $T_1,T_2:W\rightarrow\mathcal{P}_{<\omega}([0,1])$ be s.t.\ $\{0,1\}\subseteq T_i(w)$ for all $w\in W$. $e$ is extended to the complex formulas as in $\KbiG$ in the cases of propositional connectives, and in the modal cases, as follows.
\begin{align*}
e(\Box_i\phi,w)&=\max\{x\in T_i(w):x\leq\inf\limits_{w'\in W}\{wR_iw'\rightarrow_\mathsf{G}e(\phi,w')\}\}\tag{$i\in\{1,2\}$}\\
e(\lozenge_i\phi,w)&=\min\{x\in T_i(w):x\geq\sup\limits_{w'\in W}\{wR_iw'\wedge_\mathsf{G}e(\phi,w')\}\}\tag{$i\in\{1,2\}$}
\end{align*}
\end{definition}

It is clear that Lemma~\ref{lemma:FFMP} holds w.r.t.\ bi-relational $\mathsf{F}$-models as well and that $\KbiG(2)$ is $\pspace$-complete too.
\begin{proposition}\label{prop:KbiG2pspacecompleteness}
The validity of $\KbiG(2)$ is $\pspace$-complete.
\end{proposition}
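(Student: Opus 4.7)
The plan is to show both directions of $\pspace$-completeness by leveraging what we already have for the mono-relational case (Proposition~\ref{prop:KbiGpspacecompleteness}).

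For $\pspace$-hardness, I would simply observe that $\fuzzyKbiG$ embeds trivially into $\fuzzyKbiG(2)$: a formula $\phi\in\bimodalLtriangle$ using only $\Box$ and $\lozenge$ can be read as a $\bimodalLtriangle(2)$ formula that mentions only the pair $\Box_1,\lozenge_1$, and its validity on mono-relational frames coincides with its $\KbiG(2)$-validity (the second relation $R_2$ plays no role). Hence any $\pspace$-hard problem for $\fuzzyKbiG$ transfers immediately, giving $\pspace$-hardness of $\fuzzyKbiG(2)$.

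For $\pspace$-membership, I would extend the finite $\mathsf{F}$-model property (Lemma~\ref{lemma:FFMP}) to the bi-relational setting using Definition~\ref{def:F2models}. The argument from~\cite{CaicedoMetcalfeRodriguezRogger2013,CaicedoMetcalfeRodriguezRogger2017} works essentially unchanged: modal subformulas headed by $\Box_i$ or $\lozenge_i$ are evaluated via an accessibility relation $R_i$ and a local truth set $T_i(w)$ just as in the mono-relational case, and the truth value of any subformula still lies in the finite set $T_1(w)\cup T_2(w)\cup\{e(p,w):p\in\mathsf{Var}(\phi)\}\cup\{0,1\}$. Bounding $|W|$, the depth of the model, and $|T_i(w)|$ by $O(|\phi|)$ proceeds by the same inductive unravelling, with the only novelty that each modal formula now spawns successors along whichever of $R_1,R_2$ is appropriate.

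Once the bounded-model property is in hand, I would run the standard witness-guessing algorithm of~\cite{CaicedoMetcalfeRodriguezRogger2017} depth-first: at each state, guess the finite local set $T_1(w)\cup T_2(w)$ and the values of variables, and for each modal subformula $\Box_i\psi$ or $\lozenge_i\psi$ whose witness is not at the current state, recursively verify the required successor. Since the depth of recursion is $O(|\phi|)$ and each frame only stores polynomially many bits (the current world description plus the modal subformula being checked), the procedure runs in polynomial space. Reducibility of satisfiability and validity via $\triangle$ (as noted before Proposition~\ref{prop:KbiGpspacecompleteness}) still works in $\KbiG(2)$, so Savitch's theorem (or the standard alternation argument) yields the $\pspace$ upper bound. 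The main obstacle I anticipate is purely bookkeeping: making sure the two truth sets $T_1(w)$ and $T_2(w)$ are handled independently in the recursion so that the successor witness for $\Box_1\psi$ is not confused with the one for $\Box_2\psi$; but nothing in the original algorithm relied on there being a single relation, so this is a routine adaptation.
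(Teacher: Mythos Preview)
Your proposal is correct and follows essentially the same approach as the paper: the paper simply remarks that Lemma~\ref{lemma:FFMP} carries over to the bi-relational $\mathsf{F}$-models of Definition~\ref{def:F2models} and that the algorithm of~\cite{CaicedoMetcalfeRodriguezRogger2017} then applies unchanged, which is exactly the membership argument you outline; hardness via the trivial embedding of $\fuzzyKbiG$ into $\fuzzyKbiG(2)$ is left implicit in the paper but is precisely what you spell out.
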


Finally, since the embeddings in Definition~\ref{def:translations} are linear, we obtain the $\pspace$-completeness of the paraconsistent logics.
\begin{proposition}\label{prop:birelPSpacecompleteness}
The validity of $\crispbirelKGsquare$, $\fuzzyKGsquare$, and $\fuzzybirelKGsquare$ is $\pspace$-complete.
\end{proposition}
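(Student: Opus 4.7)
The plan is two-pronged: establish PSpace membership via a polynomial-time reduction to (bi-relational) $\KbiG$ using the embeddings of Section~\ref{sec:paraconsistentlogics}, and establish PSpace hardness by transferring from the fuzzy G\"{o}del modal logic.

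For membership, given an input $\phi\in\bimodalLtrianglesquare$ I would first rewrite $\phi$ into a $\neg$-NNF formula $\widehat{\phi}\in\linebimodalLtrianglesquare$ using the equivalences of Remark~\ref{rem:NNFs}, then compute $\widehat{\phi}^*\wedge{\sim}\widehat{\phi}^\partial$ as in Definition~\ref{def:translations}. Both steps run in polynomial time and produce a formula of size linear in $|\phi|$, since $^*$ is a literal substitution and $^\partial$ is a connective-by-connective dualisation of fixed arity. By Theorem~\ref{theorem:validityembedding}(1), $\phi$ is $\fuzzybirelKGsquare$-valid on $\langle\mathfrak{F},w\rangle$ iff $\widehat{\phi}^*\wedge{\sim}\widehat{\phi}^\partial$ is $\fuzzyKbiG(2)$-valid there, so Proposition~\ref{prop:KbiG2pspacecompleteness} delivers PSpace membership for $\fuzzybirelKGsquare$. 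For $\fuzzyKGsquare$, Convention~\ref{conv:monorelationallanguage} ensures the translation lands in $\bimodalLtriangle$, so Proposition~\ref{prop:KbiGpspacecompleteness} suffices. For $\crispbirelKGsquare$ the same reduction works once one observes that Definition~\ref{def:F2models} and Lemma~\ref{lemma:FFMP} remain valid when $R_1,R_2$ are restricted to $\{0,1\}$-valued, yielding a crisp analogue of Proposition~\ref{prop:KbiG2pspacecompleteness}.

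For hardness, $\fuzzyKG$-validity is PSpace-hard by Caicedo--Metcalfe--Rodriguez--Rogger, so $\fuzzyKbiG$ is PSpace-hard as well. For $\crispbirelKGsquare$, Proposition~\ref{prop:birelKGsquarecrispextension} implies that $\neg$-free $\bimodalLtriangle$-validity in $\crispKbiG$ coincides with that in $\crispbirelKGsquare$, transferring the hardness immediately. For the fuzzy variants $\fuzzyKGsquare$ and $\fuzzybirelKGsquare$, I would combine the $R^+=R^-$ defining formula $\Box p\leftrightarrow\neg\lozenge\neg p$ of Proposition~\ref{prop:birelKGsquare1relationdefinable} with the standard $\triangle$-prefixing encoding: after replacing each propositional variable $p_i$ of a $\fuzzyKG$-formula $\phi$ by $\triangle p_i$, both $e_1$ and $e_2$ become $\{0,1\}$-valued on all subformulas, which makes the Gödelian clauses of Definition~\ref{def:paraconsistentsemantics} collapse to Boolean conditions and forces $e_2$-validity to track $e_1$-validity pointwise. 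One then checks that strong $\fuzzybirelKGsquare$-validity of the encoded formula on the $R^+=R^-$ subclass coincides with $\fuzzyKG$-validity of $\phi$, giving a polynomial-time reduction.

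The main obstacle is precisely this hardness transfer in the fuzzy variants: Theorem~\ref{theorem:validityembedding} goes only \emph{into} $\KbiG(2)$, giving membership for free but not hardness, and $\fuzzyKGsquare$ does not extend $\fuzzyKbiG$ (the countermodel of Fig.~\ref{fig:KGbirelnoextension} shows $\lozenge{\sim\sim}p\to{\sim\sim}\lozenge p$ fails even mono-relationally). Making the relativization argument rigorous therefore requires a careful choice of encoding formulas so that both the $e_1$- and the $e_2$-validity conditions faithfully track the source logic; once this is done, the remainder of the proof is essentially bookkeeping built on top of Propositions~\ref{prop:KbiGpspacecompleteness} and~\ref{prop:KbiG2pspacecompleteness}.
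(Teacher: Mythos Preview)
Your membership argument is essentially the paper's: reduce via Theorem~\ref{theorem:validityembedding} to $\KbiG(2)$ (or $\KbiG$ in the mono-relational case) and invoke Propositions~\ref{prop:KbiGpspacecompleteness}--\ref{prop:KbiG2pspacecompleteness}. For $\crispbirelKGsquare$ your hardness route via Proposition~\ref{prop:birelKGsquarecrispextension} is different from the paper's but correct (modulo observing that $\crispKbiG$, not only $\fuzzyKbiG$, is $\pspace$-hard).

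The genuine gap is the hardness argument for the fuzzy variants. Two claims fail.
\begin{enumerate}
\item Substituting $\triangle p_i$ for $p_i$ does \emph{not} make all subformulas $\{0,1\}$-valued on fuzzy frames: if $wRw'=\tfrac12$ and $e_1(\triangle p,w')=1$, then $e_1(\lozenge\triangle p,w)=\tfrac12$. So the G\"{o}delian clauses do not collapse to Boolean ones.
\item Even where the values are in $\{0,1\}$, $e_1$ and $e_2$ remain \emph{independent}: $\triangle p$ can take any of $(0,0),(0,1),(1,0),(1,1)$. There is no mechanism forcing $e_2$-validity to ``track'' $e_1$-validity, so strong validity of the encoded formula need not coincide with $e_1$-validity.
\end{enumerate}
The relativisation idea via $\Box p\leftrightarrow\neg\lozenge\neg p$ does not help either: this is a \emph{frame} condition, and you have not said how to internalise it into a single validity instance (you would need it for every variable at every modal depth, and even then it constrains the frame, not the value-coupling problem in~(2)).

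The paper avoids both issues with a different encoding and a different source logic. It reduces from \emph{classical} $\mathbf{K}$ over $\{\mathbf{0},\wedge,\vee,\rightarrow,\Box\}$, replacing each variable $p$ by $\triangle p\wedge\neg{\sim}\triangle p$ and each $\Box$ by $\triangle\Box$. The literal $\triangle p\wedge\neg{\sim}\triangle p$ takes only the values $(1,0)$ and $(0,1)$ (it couples the two coordinates, solving~(2)); the outer $\triangle$ in $\triangle\Box$ re-Booleanises after each modality (solving~(1)). One then builds a classical countermodel from a $\KGsquare$ countermodel by setting $wR^!w'$ iff $wR^+w'>0$ or $wR^-w'>0$. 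This single reduction works uniformly for all three target logics.
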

\begin{proof}
The $\pspace$-membership follows immediately from Theorem~\ref{theorem:validityembedding} and Propositions~\ref{prop:KbiGpspacecompleteness} and~\ref{prop:KbiG2pspacecompleteness}. For the hardness, we proceed as follows. Let $\phi$ be a formula over $\{\mathbf{0},\wedge,\vee,\rightarrow,\Box\}$. We define $\phi^!$ to be the result of replacing every occurrence $p$ of variables with $\triangle p\wedge\neg{\sim}\triangle p$ and putting $\triangle$ in front of every~$\Box$. It is easy to establish by induction that $\phi^!$ can only be evaluated at $(1,0)$ or $(0,1)$.

We show that $\mathbf{K}\models\phi$ iff $\KGsquare\models\phi^!$. It is clear that $\KGsquare\not\models\phi^!$ when $\mathbf{K}\not\models\phi$ since classical values are preserved by $\bimodalLtrianglesquare$ connectives. For the converse, let $\mathfrak{M}=\langle W,R^+,R^-,e_1,e_2\rangle$ be a $\KGsquare$ model s.t.\ $e(\phi^!,w)=(0,1)$ for some $w\in W$. We construct a \emph{classical} model $\mathfrak{M}^!=\langle W,R^!,e^!\rangle$ as follows: $wR^!w'$ iff $wR^+w'>0$ or $wR^-w'>0$; $w\in e^!(p)$ iff $e(p,w)=(1,0)$. We check by induction that $e(\phi^!,w)=(1,0)$ iff $\mathfrak{M}^!,w\vDash\phi$. The basis case of $\phi^!=\triangle p\wedge\neg{\sim}\triangle p$ and $\phi=p$ holds by construction of $\mathfrak{M}^!$. The cases of propositional connectives can be proven directly from the induction hypothesis. Finally, if $\phi^!=\triangle\Box\psi^!$ and $\phi=\Box\psi$, we have that $e(\triangle\Box\psi^!,w)=(1,0)$ iff $e(\psi^!,w')=(1,0)$ for every $w'$ s.t.\ $wR^+w'>0$ or $wR^-w'>0$, which, by the induction hypothesis, is equivalent to $\mathfrak{M},w'\vDash\psi$ for every $w'\in R^!(w)$, and thus $\mathfrak{M},w\vDash\Box\psi$.
\end{proof}

To obtain the $\pspace$-completeness of $\infoGsquare$ is less straightforward since $\blacksquare$ and $\blacklozenge$ are not standard. To circumvent this, we use the approach from~\cite{BilkovaFrittellaKozhemiachenko2023nonstandard} and augment its language with an additional constant $\mathbf{B}$ s.t.\ $e_1(\mathbf{B},w)=e_2(\mathbf{B},w)=1$. We denote the resulting logics with $\infoGsquare(\mathbf{B})$.
\begin{proposition}\label{prop:infoPSpacecompleteness}~
\begin{enumerate}
\item The strong validity of all $\infoGsquare(\mathbf{B})$ logics is $\pspace$-complete.
\item $e_1$- and $e_2$-validities of all $\infoGsquare$ logics are $\pspace$-complete.
\end{enumerate}
\end{proposition}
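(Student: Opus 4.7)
The plan is to establish part~2 first and then derive part~1 from it by a direct reduction based on the constant~$\mathbf{B}$.

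For part~2, $\pspace$-membership follows from Theorem~\ref{theorem:validityembedding}.2 by reading off its two conjuncts separately. A~straightforward inspection of the proof of that theorem shows that, for $\phi\in\infobimodalLtrianglesquare$, $\phi$ is $e_1$-valid on~$\mathfrak{F}$ iff $(\phi^*)^\circ$ is $\KbiG(2)$-valid on~$\mathfrak{F}$, while $\phi$ is $e_2$-valid on~$\mathfrak{F}$ iff ${\sim}\phi^\partial$ is $\KbiG(2)$-valid on~$\mathfrak{F}$. Both translations are linear in~$|\phi|$, so Proposition~\ref{prop:KbiG2pspacecompleteness} gives membership in~$\pspace$ uniformly for all $\infoGsquare$-variants.

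For $\pspace$-hardness of $e_1$-validity, I use that for any $\neg$-free $\psi\in\infobimodalLtrianglesquare$ the value $e_1(\psi,w)$ depends only on~$R^+$ and on the $e_1$-values of variables, and the $e_1$-clauses of $\blacksquare$, $\blacklozenge$, and of the propositional connectives coincide with the $\KbiG$-clauses of $\Box$, $\lozenge$, and of the corresponding connectives. Consequently, given $\chi\in\bimodalLtriangle$, the formula $\chi^{+\bullet}$ (cf.\ Convention~\ref{conv:circtranslation}) is $e_1$-valid on any $\infoGsquare$-frame iff $\chi$ is $\KbiG$-valid on the underlying mono-relational frame (viewed via~$R^+$). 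Since $\neg$-freeness makes $R^-$ immaterial, the reduction works uniformly for mono- and bi-relational variants, and since $\KbiG$-validity is $\pspace$-hard by Proposition~\ref{prop:KbiGpspacecompleteness}, $e_1$-validity of each $\infoGsquare$-variant is $\pspace$-hard too. For $e_2$-validity, I observe that $e_2(\phi,w)=0$ iff $e_1({\sim}\neg\phi,w)=1$, giving a linear reduction $\phi\mapsto{\sim}\neg\phi$ from $e_2$-validity to $e_1$-validity.

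For part~1, $\pspace$-membership is obtained by extending the translations of Definition~\ref{def:translations} to~$\mathbf{B}$ by setting $\mathbf{B}^*\coloneqq\mathbf{1}$ (reflecting $e_1(\mathbf{B})=1$) and $\mathbf{B}^\partial\coloneqq\mathbf{1}$ (reflecting $e_2(\mathbf{B})=1$); a routine extension of the induction in Lemmas~\ref{lemma:negNNF} and~\ref{lemma:proptoembedding} then yields the analogue of Theorem~\ref{theorem:validityembedding}.2 for $\infoGsquare(\mathbf{B})$. The embedding stays linear, so Proposition~\ref{prop:KbiG2pspacecompleteness} again delivers membership. For $\pspace$-hardness, consider the map $\phi\mapsto\phi\vee\mathbf{B}$. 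From Definition~\ref{def:paraconsistentsemantics},
\begin{align*}
e_1(\phi\vee\mathbf{B},w)&=e_1(\phi,w)\vee_\mathsf{G}1=1,\\
e_2(\phi\vee\mathbf{B},w)&=e_2(\phi,w)\wedge_\mathsf{G}1=e_2(\phi,w),
\end{align*}
so $\phi\vee\mathbf{B}$ is strongly valid iff $\phi$ is $e_2$-valid. This linearly reduces the (by part~2, $\pspace$-hard) $e_2$-validity problem of $\infoGsquare$ to strong validity of $\infoGsquare(\mathbf{B})$.

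The main obstacle is more technical than conceptual: one must verify carefully that the extensions of $^*$ and $^\partial$ to $\mathbf{B}$ preserve Theorem~\ref{theorem:validityembedding}.2, and that the $\neg$-freeness condition in the hardness reduction makes both the choice of~$R^-$ and the fuzziness/crispness of the frame immaterial, producing one uniform argument across every crisp/fuzzy and mono-/bi-relational variant of~$\infoGsquare$.
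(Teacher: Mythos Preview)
Your overall strategy is sound and close to the paper's, but there is one genuine slip. For $e_2$-hardness you write that $e_2(\phi,w)=0$ iff $e_1({\sim}\neg\phi,w)=1$ and call this ``a reduction from $e_2$-validity to $e_1$-validity''. That direction only re-establishes membership of $e_2$-validity in $\pspace$; it does \emph{not} give hardness. What you need is a reduction \emph{from} a problem already known to be $\pspace$-hard \emph{to} $e_2$-validity. The fix is immediate and uses the very same map: from $e_2({\sim}\chi,w)=1\coimplies_\mathsf{G}e_2(\chi,w)$ and $e_2(\neg\psi,w)=e_1(\psi,w)$ one gets $e_2({\sim}\neg\psi,w)=0$ iff $e_1(\psi,w)=1$, so $\psi\mapsto{\sim}\neg\psi$ reduces $e_1$-validity to $e_2$-validity. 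Since your Part~1 hardness argument (via $\phi\mapsto\phi\vee\mathbf{B}$) rests on $e_2$-validity being $\pspace$-hard, this correction is needed there as well.

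With that repaired, your route differs from the paper's mainly in the hardness reductions. The paper reduces $\KbiG$-validity directly: for Part~2 it uses $\chi\mapsto{\sim}\chi^\partial$ for $e_2$-hardness, and for Part~1 it uses $\phi\mapsto(\mathbf{B}\rightarrow\phi^{+\bullet})$, exploiting that $e_2(\mathbf{B}\rightarrow\psi,w)=e_2(\psi,w)\coimplies_\mathsf{G}1=0$ always, so strong validity collapses to $e_1$-validity of~$\phi^{+\bullet}$. Your reductions ($e_1\to e_2$ via ${\sim}\neg$, then $e_2\to$ strong validity via ${-}\vee\mathbf{B}$) are equally valid and arguably more modular; the paper's are slightly more direct in that they avoid chaining through Part~2. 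For membership in Part~1 you and the paper agree: one simply adds $\mathbf{B}^\partial\coloneqq\mathbf{1}$ (your $\mathbf{B}^*\coloneqq\mathbf{1}$ is really a clause for the combined $({-}^*)^\circ$ map, since $\mathbf{B}$ is not in the $\KbiG$ language).
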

\begin{proof}
Begin with 1. Membership follows from Theorem~\ref{theorem:validityembedding} and Propositions~\ref{prop:KbiGpspacecompleteness} and~\ref{prop:KbiG2pspacecompleteness}. We just need to add $\mathbf{B}^\partial=\mathbf{1}$ to the $^\partial$ translation in Definition~\ref{def:translations}. For the hardness, observe that since $e_1$-conditions from Definition~\ref{def:paraconsistentsemantics} coincide with the semantics of $\KbiG$ (Definition~\ref{def:KbiGsemantics}), we have immediately that $\phi$ is $\KbiG$-valid on a given (mono- or bi-relational) frame $\mathfrak{F}$ iff $\mathbf{B}\rightarrow\phi^{+\bullet}$ is strongly $\infoGsquare$-valid on $\mathfrak{F}$.

The proof of 2.\ is also simple. Again, the membership follows immediately from Theorem~\ref{theorem:validityembedding}, and Propositions~\ref{prop:KbiGpspacecompleteness} and~\ref{prop:KbiG2pspacecompleteness}. For the hardness, we observe that $\phi$ is $\KbiG$-valid iff $\phi^{+\bullet}$ is $e_1$-valid and that $\chi$ is $\KbiG$-valid iff ${\sim}\chi^\partial$ is $e_2$-valid.
\end{proof}

We finish the section with a short observation considering finitely-branching frames. Recall from~\cite{BilkovaFrittellaKozhemiachenko2022IJCAR,BilkovaFrittellaKozhemiachenko2023IGPL} that (both fuzzy and crisp) finitely-branching frames are definable in $\KbiG$. It is clear then that they are definable in $\infoGsquare(\mathbf{B})$ as well. In fact, one can construct a tableaux calculus for $\infoGsquare$ and $\infoGsquare(\mathbf{B})$ over finitely-branching frames (we refer the reader to~\cite{BilkovaFrittellaKozhemiachenko2023nonstandard}) and show that their satisfiabilities are also in $\pspace$ (in fact, the satisfiability of $\infoGsquare(\mathbf{B})$ is $\pspace$-complete).

Likewise, it is possible to define finitely-branching frames in $\birelKGsquare$.
\begin{proposition}\label{prop:finitebranching}
$\mathfrak{F}$ is finitely branching iff $\mathfrak{F}\models{\sim\sim}\Box(p\!\vee\!{\sim}p)$ and $\mathfrak{F}\models\mathbf{1}\!\coimplies\!\lozenge\neg(p\!\vee\!{\sim}p)$.
\end{proposition}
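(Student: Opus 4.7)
The plan is to unpack the strong validity of each formula into conditions on $R^+$ and $R^-$ and then match these directly to the definition of a finitely branching frame, i.e.\ that $R^+(w)=\{w':wR^+w'>0\}$ and $R^-(w)=\{w':wR^-w'>0\}$ are finite for every $w\in W$.

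First I would compute the endpoint behaviour of $p\vee{\sim}p$. Using Definition~\ref{def:paraconsistentsemantics}, $e_1(p\vee{\sim}p,w')$ equals $1$ when $e_1(p,w')=0$ and equals $e_1(p,w')$ otherwise, so $e_1(p\vee{\sim}p,w')\in(0,1]$; dually $e_2(p\vee{\sim}p,w')\in[0,1)$. Using the clauses for ${\sim}$ and $\neg$ together with the defined $\coimplies$ (via $\phi\coimplies\chi:=\neg(\neg\chi\rightarrow\neg\phi)$), strong validity of the two formulas unfolds into the following four conditions, required to hold for every $w\in\mathfrak{F}$ and every valuation:
\begin{align*}
(1)~&e_1(\Box(p\vee{\sim}p),w)>0,
&(2)~&e_2(\Box(p\vee{\sim}p),w)<1,\\
(3)~&e_1(\lozenge\neg(p\vee{\sim}p),w)<1,
&(4)~&e_2(\lozenge\neg(p\vee{\sim}p),w)>0.
\end{align*}
Conditions (1) and (3) constrain $R^+$, while (2) and (4) constrain $R^-$.

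For the $(\Leftarrow)$ direction, assume $R^+(w)$ and $R^-(w)$ are finite for every $w$. The $w'$'s with $wR^\pm w'=0$ contribute $1$ to the infima in (1) and (4) and $0$ to the suprema in (2) and (3), so only finitely many surviving terms affect the value. In (1) each surviving term $wR^+w'\rightarrow_\mathsf{G}e_1(p\vee{\sim}p,w')$ is positive, being either $1$ or $e_1(p\vee{\sim}p,w')>0$; a minimum of finitely many positive values is positive. Condition (4) is handled symmetrically with $R^-$ in place of $R^+$. Condition (2) is a maximum of finitely many terms each bounded above by $e_2(p\vee{\sim}p,w')<1$, hence $<1$; and (3) is analogous, with $R^+$ replacing $R^-$.

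For the $(\Rightarrow)$ direction I contrapose. If $R^+(w_0)$ is infinite, I enumerate its elements $w_1,w_2,\ldots$ with $r_i=w_0R^+w_i>0$ and take a valuation with $e_1(p,w_i)=\min(r_i,2^{-i})/2$ (and arbitrary values elsewhere). Then $0<e_1(p,w_i)<r_i$, so $e_1(p\vee{\sim}p,w_i)=e_1(p,w_i)$ and $r_i\rightarrow_\mathsf{G}e_1(p,w_i)=e_1(p,w_i)\leq 2^{-i-1}\to 0$. Hence $e_1(\Box(p\vee{\sim}p),w_0)=0$, refuting~(1) and with it the first formula. The case of $R^-(w_0)$ infinite is dual: the same choice of $e_1(p,w_i)$ on an enumeration of $R^-(w_0)$ drives the infimum in (4) to $0$, refuting the second formula. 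The main obstacle is designing the valuation in the $(\Rightarrow)$ direction uniformly, with each $e_1(p,w_i)$ positive yet small relative to both $r_i$ and a null sequence; the choice $\min(r_i,2^{-i})/2$ sidesteps any case split on whether the $r_i$'s are bounded away from $0$.
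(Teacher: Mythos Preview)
Your proposal is correct and follows essentially the same route as the paper: you unpack strong validity into the four threshold conditions (1)--(4), verify them under finite branching by taking a finite $\min$/$\max$ of values in $(0,1]$ resp.\ $[0,1)$, and refute (1) (resp.\ (4)) when $R^+(w_0)$ (resp.\ $R^-(w_0)$) is infinite by choosing $e_1(p,w_i)$ positive, below $r_i$, and tending to $0$. The paper uses $e_1(p,w_i)=r_i/i$ on a countable subset (with $e_1(p,\cdot)=1$ elsewhere) in place of your $\min(r_i,2^{-i})/2$; both do the job, and your choice is arguably cleaner since it avoids the $i=1$ corner case. One very small tightening: when you say ``enumerate its elements'', you should say you pick a countably infinite subset of $R^+(w_0)$ (as the paper does), since the frame may be uncountable.
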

\begin{proof}
Observe that $e_1(p\vee{\sim}p,w)\!>\!0$ and $e_2(p\vee{\sim}p,w)\!<\!1$ for every $w\!\in\!\mathfrak{F}$. Since $\mathfrak{F}$ is finitely branching, $\inf\limits_{w'\in W}\{wSw'\rightarrow_{\mathsf{G}}e_1(p\vee{\sim}p,w')\}>0$ and $\sup\limits_{w'\in W}\{wSw'\wedge_{\mathsf{G}}e_2(p\vee{\sim}p,w')\}<1$ for $S\in\{R^+,R^-\}$. Thus, $e_1(\Box(p\vee{\sim}p),w)>0$ and $e_2(\Box(p\vee{\sim}p),w)<1$, whence, $e({\sim\sim}\Box(p\vee{\sim}p),w)=(1,0)$. Likewise, $e_1(\lozenge\neg(p\vee{\sim}p),w)<1$ and $e_2(\lozenge\neg(p\vee{\sim}p),w)>0$, whence $e(\mathbf{1}\coimplies\lozenge\neg(p\vee{\sim}p),w)=(1,0)$.

For the converse, we have two cases: (1) $|R^+(w)|\geq\aleph_0$ or (2) $|R^-(w)|\geq\aleph_0$ for some $w\in\mathfrak{F}$. In the first case, we let $X\subseteq R^+(w)$ be countable and define the value of $p$ as follows: $e(p,w'')=(1,0)$ for every $w''\notin X$ and $e(p,w_i)=\left(wR^+w'\cdot\frac{1}{i},0\right)$  for every $w_i\in X$. It is clear that $\inf\limits_{w'\in W}\{wR^+w'\rightarrow_\mathsf{G}e_1(p\vee{\sim}p,w')\}=0$, whence $e_1({\sim\sim}\Box(p\vee{\sim}p))=0$ as required.

In the second case, $Y\subseteq R^-(w)$ be countable and define the value of $p$ as follows: $e(p,w'')=(1,0)$ for every $w''\notin Y$ and $e(p,w_i)=\left(wR^-w'\cdot\frac{1}{i},0\right)$. It is clear that $\inf\limits_{w'\in W}\{wR^-w'\rightarrow_\mathsf{G}e_2(\neg(p\vee{\sim}p),w')\}=0$, whence $e_2(\lozenge\neg(p\vee{\sim}p))=0$ and $e_2(\mathbf{1}\coimplies\lozenge\neg(p\vee{\sim}p))=1$ as required.
\end{proof}
A tableaux calculus for $\KGsquare$ over finitely-branching frames can be constructed in a manner similar to the calculus for $\infoGsquare$ over finitely-branching frames presented in~\cite{BilkovaFrittellaKozhemiachenko2023nonstandard}.
\section{Conclusion\label{sec:conclusion}}
Let us summarise the results of the paper. We provided a strongly complete axiomatisation of the fuzzy bi-G\"{o}del modal logic (Theorem~\ref{theorem:KbiGstrongcompleteness}) and constructed the faithful embeddings of its paraconsistent relatives $\KGsquare$ and $\infoGsquare$ into $\KbiG$ and $\KbiG(2)$ depending on the number of relations in the frames. The embeddings hold for the valid formulas and for the valid entailments alike (Theorems~\ref{theorem:validityembedding} and~\ref{theorem:entailmentsembedding}). Using these embeddings, we provided a characterisation of $\KGsquare$- and $\infoGsquare$-definable classes of frames (Corollary~\ref{cor:definabilitycharacterisation}) as well as a characterisation of transferrable formulas (Theorem~\ref{theorem:transfercriterion}). Moreover, we established that all $\KGsquare$'s and $\infoGsquare$'s are $\pspace$-complete (Propositions~\ref{prop:birelPSpacecompleteness} and~\ref{prop:infoPSpacecompleteness}).

Still, several questions remain open. First of all, the axiomatisations of $\KGsquare$'s and $\infoGsquare$'s. Indeed, now we can only ‘prove’ negative normal forms of $\linebimodalLtrianglesquare$ and $\lineinfobimodalLtrianglesquare$ formulas in $\HKbiG(2)$ and then use the transformations in~\eqref{equ:lineNNFs} to obtain the intended formulas. However, since $\KGsquare$'s and $\infoGsquare$'s do not extend $\KbiG$ in the general case ($\infoGsquare$'s are non-standard, whence never extend $\KbiG$, while $\KGsquare$'s extend $\KbiG$ only for the crisp case), it would be instructive to obtain calculi designed specifically for these logics. One way to attempt this would be to construct a \emph{bi-lateral} calculus where the notion of proof is supplemented with the dual notion of disproof. Such calculi exist for bi-Intuitionistic logic (cf., e.g.,~\cite{Ayhan2021}) which is the bi-G\"{o}del logic without two prelinearity axioms $(p\rightarrow q)\vee(q\rightarrow p)$ and $\mathbf{1}\coimplies((p\coimplies q)\wedge(q\coimplies p))$. Likewise, there are bi-lateral calculi for the Belnap--Dunn logic (cf., e.g.,~\cite{OmoriWansing2022}) which can also be helpful since $\KGsquare$'s and $\infoGsquare$'s can be seen as Belnapian relatives of $\KbiG$.

Second, as we have already noted, the propositional fragment of $\KGsquare$'s and $\infoGsquare$'s is $\Gsquare$ --- a~certain paraconsistent expansion of $\mathsf{G}$. Namely, $\Gsquare$ is the linear extension of $\mathsf{I}_4\mathsf{C}_4$~\cite{Wansing2008} (cf.\ the axiomatisation of $\Gsquare$ in~\cite{BilkovaFrittellaKozhemiachenkoMajer2023IJAR}). It makes sense, then, to consider other modal expansions of paraconsistent logics and their linear extensions presented in~\cite{Wansing2008}. Moreover, it makes sense to investigate the modalities whose support of falsity \emph{dualises the support of truth}. In particular, for logics based on $\Gsquare$:
\begin{align*}
e_2(\Box\phi,w)&=\inf\limits_{w'\in W}\{e_2(\phi,w)\coimplies_\mathsf{G}wR^-w'\}&
e_2(\lozenge\phi,w)&=\inf\limits_{w'\in W}\{wR^-w'\vee_\mathsf{G}e_2(\phi,w)\}
\end{align*}

Third, we know from~\cite{BilkovaFrittellaKozhemiachenko2022IJCAR,BilkovaFrittellaKozhemiachenko2023IGPL} that \emph{crisp} $\KGsquare$ (both mono- and bi-relational) conservatively extend \emph{monorelational} $\crispKbiG$. Thus, there is a trivial embedding of $\KbiG$ into them. On the other hand, it is unclear whether $\fuzzyKbiG$ or $\fuzzyKbiG(2)$ can be embedded into \emph{fuzzy} $\KGsquare$. And although Corollary~\ref{cor:definabilitycharacterisation}, suggests that not all $\fuzzyKbiG$-definable classes of (mono- or bi-relational) frames are $\fuzzyKGsquare$-definable, it is not evident either. 

Finally, recall that $\KGsquare$ and $\infoGsquare$ semantics can be reformulated in terms of a single valuation on $[0,1]^{\Join}$ (cf.~Fig.~\ref{fig:01squareagain}). $\infoGsquare$ adds modalities that correspond to the infima and suprema w.r.t.\ the informational order on $[0,1]^{\Join}$. It makes sense, then, to combine $\KGsquare$ and $\infoGsquare$ into one logic and equip it with the full set of bi-lattice connectives. Note that the modal Belnapian bi-lattice logic from~\cite{JungRivieccio2013} turns out to be equivalent to the classical bi-modal logic~\cite{Speranski2022}. Our aim then is to determine whether this holds for $\KbiG(2)$ and the bi-lattice paraconsistent G\"{o}del modal logic. In particular, it will make sense to explore the connection between $\KbiG(2)$ enriched with the \L{}ukasiewicz negation $\sim_{\Luk}$ interpreted as $e({\sim_{\Luk}}\phi,w)=1-e(\phi,w)$\footnote{It is important to note that $\KbiG$ with the \L{}ukasiewicz negation \emph{does not coincide with $\KGsquare$}. Indeed, consider the following connectives
\begin{align*}
\triangletop\phi&\coloneqq\triangle\phi\wedge\triangle{\sim}\neg\phi
&
\triangletop_{\Luk}\phi&\coloneqq\triangle\phi\wedge\triangle{\sim\sim_{\Luk}}\phi
\end{align*}
It is clear that $\triangletop(p\rightarrow q)\vee\triangletop(q\rightarrow p)$ is not valid in $\Gsquare$ while $\triangletop_{\Luk}(p\rightarrow q)\vee\triangletop_{\Luk}(q\rightarrow p)$ is valid in $\KbiG$ with~${\sim_\Luk}$.} and the bi-lattice G\"{o}del modal logic with the conflation operator which is symmetric w.r.t.\ the \emph{vertical axis} of $[0,1]^{\Join}$.
\bibliographystyle{plain}
\bibliography{references.bib}

\begin{thebibliography}{10}

\bibitem{AguileraDieguezFernandez-DuqueMcLean2022}
J.P. Aguilera, M.~Di{\'e}guez, D.~Fern{\'a}ndez-Duque, and B.~McLean.
\newblock {Time and G{\"o}del: Fuzzy Temporal Reasoning in PSPACE}.
\newblock In A.~Ciabattoni, E.~Pimentel, and R.J.G.B.~{de} Queiroz, editors,
  {\em Logic, Language, Information, and Computation. WoLLIC 2022}, volume
  13468 of {\em Lecture notes in computer science}, pages 18--35. Springer
  International Publishing, Cham, 2022.

\bibitem{Ayhan2021}
S.~Ayhan.
\newblock Uniqueness of logical connectives in a bilateralist setting.
\newblock In I.~Sedl\'{a}r and M.~Blicha, editors, {\em The Logica Yearbook
  2020}, pages 1--16. College Publications, 2021.

\bibitem{Baaz1996}
M.~Baaz.
\newblock Infinite-valued {G}{\"o}del logics with $0$-$1$-projections and
  relativizations.
\newblock In P.~H{\'{a}}jek, editor, {\em G{\"o}del'96: Logical foundations of
  mathematics, computer science and physics---Kurt G{\"o}del's legacy, Brno,
  Czech Republic, August 1996, proceedings}, volume~6 of {\em Lecture Notes in
  Logic}, pages 23--34. Association for Symbolic Logic, 1996.

\bibitem{Belnap1977fourvalued}
N.D. Belnap.
\newblock {A Useful Four-Valued Logic}.
\newblock In J.M. Dunn and G.~Epstein, editors, {\em Modern Uses of
  Multiple-Valued Logic}, pages 5--37, Dordrecht, 1977. Springer Netherlands.

\bibitem{Belnap1977computer}
N.D. Belnap.
\newblock How a computer should think.
\newblock In G.~Ryle, editor, {\em Contemporary Aspects of Philosophy}. Oriel
  Press, 1977.

\bibitem{BilkovaFrittellaKozhemiachenko2021}
M.~B{\'i}lkov{\'a}, S.~Frittella, and D.~Kozhemiachenko.
\newblock Constraint tableaux for two-dimensional fuzzy logics.
\newblock In A.~Das and S.~Negri, editors, {\em Automated Reasoning with
  Analytic Tableaux and Related Methods}, pages 20--37. Springer International
  Publishing, 2021.

\bibitem{BilkovaFrittellaKozhemiachenko2022IJCAR}
M.~B{\'\i}lkov{\'a}, S.~Frittella, and D.~Kozhemiachenko.
\newblock Paraconsistent {G}{\"o}del modal logic.
\newblock In {\em Automated Reasoning}, Lecture notes in computer science,
  pages 429--448. Springer International Publishing, Cham, 2022.

\bibitem{BilkovaFrittellaKozhemiachenko2023IGPL}
M.~B{\'\i}lkov{\'a}, S.~Frittella, and D.~Kozhemiachenko.
\newblock Crisp {Bi-G{\"o}del} modal logic and its paraconsistent expansion.
\newblock {\em Logic Journal of the IGPL}, September 2023.

\bibitem{BilkovaFrittellaKozhemiachenko2023nonstandard}
M.~B{\'\i}lkov{\'a}, S.~Frittella, and D.~Kozhemiachenko.
\newblock {Non-standard modalities in paraconsistent G{\"o}del logic}.
\newblock In S.~Gaggl, M.V. Martinez, and M.~Ortiz, editors, {\em Logics in
  Artificial Intelligence}, volume 14281 of {\em Lecture Notes in Artificial
  Intelliegence}, pages 420--436. Springer, 2023.

\bibitem{BilkovaFrittellaKozhemiachenkoMajer2023IJAR}
M.~B{\'\i}lkov{\'a}, S.~Frittella, D.~Kozhemiachenko, and O.~Majer.
\newblock Qualitative reasoning in a two-layered framework.
\newblock {\em International Journal of Approximate Reasoning}, 154:84--108,
  March 2023.

\bibitem{BlackburndeRijkeVenema2010}
P.~Blackburn, M.~{de} Rijke, and Y.~Venema.
\newblock {\em Modal logic}.
\newblock Cambridge tracts in theoretical computer science 53. Cambridge
  University Press, 4.\ print.\ with corr. edition, 2010.

\bibitem{CaicedoMetcalfeRodriguezRogger2013}
X.~Caicedo, G.~Metcalfe, R.~Rodr{\'\i}guez, and J.~Rogger.
\newblock {A finite model property for G{\"o}del modal logics}.
\newblock In L.~Libkin, U.~Kohlenbach, and R.~{de} Queiroz, editors, {\em
  Logic, Language, Information, and Computation. WoLLIC 2013}, volume 8071 of
  {\em Lecture notes in Computer Science}, pages 226--237. Springer, 2013.

\bibitem{CaicedoMetcalfeRodriguezRogger2017}
X.~Caicedo, G.~Metcalfe, R.~Rodr{\'\i}guez, and J.~Rogger.
\newblock Decidability of order-based modal logics.
\newblock {\em Journal of Computer and System Sciences}, 88:53--74, September
  2017.

\bibitem{CaicedoRodriguez2010}
X.~Caicedo and R.O. Rodriguez.
\newblock {Standard G{\"o}del modal logics}.
\newblock {\em Studia Logica}, 94(2):189--214, 2010.

\bibitem{CaicedoRodriguez2015}
X.~Caicedo and R.O. Rodr{\'\i}guez.
\newblock {Bi-modal G{\"o}del logic over [0,1]-valued Kripke frames}.
\newblock {\em Journal of Logic and Computation}, 25(1):37--55, 2015.

\bibitem{Drobyshevich2020}
S.~Drobyshevich.
\newblock A general framework for $\mathsf{FDE}$-based modal logics.
\newblock {\em Studia Logica}, 108(6):1281--1306, December 2020.

\bibitem{Dunn1976}
J.M. Dunn.
\newblock Intuitive semantics for first-degree entailments and ‘coupled
  trees’.
\newblock {\em Philosophical Studies}, 29(3):149--168, 1976.

\bibitem{Ferguson2014}
T.M. Ferguson.
\newblock \l{}ukasiewicz negation and many-valued extensions of constructive
  logics.
\newblock In {\em 2014 {IEEE} 44th International Symposium on {Multiple-Valued}
  Logic}. IEEE, May 2014.

\bibitem{FisherServi1984}
G.~Fischer~Servi.
\newblock Axiomatizations for some intuitionistic modal logics.
\newblock {\em Rendiconti del Seminario Matematico Università e Politecnico di
  Torino}, 42, 1984.

\bibitem{Goble2006}
L.~Goble.
\newblock Paraconsistent modal logic.
\newblock {\em Logique et Analyse}, 49(193):3--29, 2006.

\bibitem{Gore2000}
R.~Gor{\'e}.
\newblock {Dual Intuitionistic Logic Revisited}.
\newblock In {\em Lecture Notes in Computer Science}, Lecture notes in computer
  science, pages 252--267. Springer Berlin Heidelberg, Berlin, Heidelberg,
  2000.

\bibitem{GrigoliaKiseliovaOdisharia2016}
R.~Grigolia, T.~Kiseliova, and V.~Odisharia.
\newblock Free and projective bimodal symmetric {G}{\"o}del algebras.
\newblock {\em Studia Logica}, 104(1):115--143, 2016.

\bibitem{JungRivieccio2013}
A.~Jung and U.~Rivieccio.
\newblock Kripke semantics for modal bilattice logic.
\newblock In {\em 2013 28th Annual {ACM/IEEE} Symposium on Logic in Computer
  Science}. IEEE, June 2013.

\bibitem{LemmonScott1977}
E.J. Lemmon and D.~Scott.
\newblock {\em An Introduction to Modal Logic}.
\newblock Number~11 in American Philosophical Quartely, Monograph. Basil
  Blackwell, 1977.

\bibitem{MaierMaHitzler2013}
F.~Maier, Y.~Ma, and P.~Hitzler.
\newblock Paraconsistent {OWL} and related logics.
\newblock {\em Semant. Web}, 4(4):395--427, 2013.

\bibitem{MetcalfeOlivetti2009}
G.~Metcalfe and N.~Olivetti.
\newblock Proof systems for a g{\"o}del modal logic.
\newblock In {\em International Conference on Automated Reasoning with Analytic
  Tableaux and Related Methods}, pages 265--279. Springer, 2009.

\bibitem{MetcalfeOlivetti2011}
G.~Metcalfe and N.~Olivetti.
\newblock {Towards a Proof Theory of G{\"o}del Modal Logics}.
\newblock {\em Logical Methods in Computer Science}, 7, 2011.

\bibitem{OdintsovWansing2010}
S.P. Odintsov and H.~Wansing.
\newblock {Modal logics with Belnapian truth values}.
\newblock {\em Journal of Applied Non-Classical Logics}, 20(3):279--301, 2010.

\bibitem{OdintsovWansing2017}
S.P. Odintsov and H.~Wansing.
\newblock {Disentangling FDE-Based Paraconsistent Modal Logics}.
\newblock {\em Studia Logica}, 105(6):1221--1254, 2017.

\bibitem{OmoriWansing2022}
H.~Omori and H.~Wansing.
\newblock {Varieties of negation and contra-classicality in view of Dunn
  semantics}.
\newblock In K.~Bimbó, editor, {\em Relevance Logics and other Tools for
  Reasoning. Essays in Honor of J.~Michael Dunn}, volume~46 of {\em Tributes},
  pages 309--337. College Publications, 2022.

\bibitem{Priest2008FromIftoIs}
G.~Priest.
\newblock {\em {An Introduction to Non-Classical Logic. From If to Is}}.
\newblock Cambridge University Press, 2nd edition, 2008.

\bibitem{Rauszer1974}
C.~Rauszer.
\newblock {A Formalization of The Propositional Calculus of H--B Logic}.
\newblock {\em Studia Logica}, 33:23--34, 1974.

\bibitem{RodriguezVidal2021}
R.O. Rodriguez and A.~Vidal.
\newblock {Axiomatization of Crisp G{\"o}del Modal Logic}.
\newblock {\em Studia Logica}, 109:367--395, 2021.

\bibitem{Rogger2016phd}
J.~Rogger.
\newblock {\em {Decidability of Order-Based Modal Logics}}.
\newblock PhD thesis, University of Bern, 2016.

\bibitem{Sherkhonov2008}
E.Yu. Sherkhonov.
\newblock Modal operators over constructive logic.
\newblock {\em Journal of Logic and Computation}, 18(6):815--829, October 2008.

\bibitem{Speranski2022}
S.O. Speranski.
\newblock {Modal Bilattice Logic and its Extensions}.
\newblock {\em Algebra and Logic}, 60(6):407--424, January 2022.

\bibitem{Wansing2005}
H.~Wansing.
\newblock Connexive modal logic.
\newblock In R.~Schmidt, I.~Pratt-Hartmann, M.~Reynolds, and H.~Wansing,
  editors, {\em AiML-2004: Advances in Modal Logic}, volume~5 of {\em Advances
  in modal logic}, pages 387--399, 2005.

\bibitem{Wansing2008}
H.~Wansing.
\newblock Constructive negation, implication, and co-implication.
\newblock {\em Journal of Applied Non-Classical Logics}, 18(2--3):341--364,
  2008.

\end{thebibliography}
\end{document}